\theoremstyle{definition}
\newtheorem{theorem}{Theorem}
\newtheorem{corollary}[theorem]{Corollary}
\newtheorem{proposition}[theorem]{Proposition}
\newtheorem{lemma}[theorem]{Lemma}
\newtheorem{remark}[theorem]{Remark}
\newtheorem{definition}[theorem]{Definition}
\newtheorem{example}[theorem]{Example}
\newcommand{\R}{{\mathbb R}}
\newcommand{\Z}{{\mathbb Z}}
\newcommand{\mlt}{{\mu}}
\newcommand{\tn}{\xi}
\newcommand{\ka}[1]{{\kappa_{#1}}}
\newcommand{\ga}[1]{{\gamma_{#1}}}
\newcommand{\beq}{\begin{equation}}
\newcommand{\eeq}{\end{equation}}
\newcommand\ds[1]{\displaystyle{#1}}
\newcommand{\sym}{\text{sym}}
\newcommand{\symi}{\text{sym-index}}
\def\cv{{ \Gamma}}
\providecommand{\bysame}{\leavevmode\hbox to3em{\hrulefill}\thinspace}
\providecommand{\MR}{\relax\ifhmode\unskip\space\fi MR }
\providecommand{\href}[2]{#2}
\definecolor{myCyan}{RGB}{0,255,255}
\definecolor{myGreen}{RGB}{0,128,0}
\begin{document}

\title{Non-congruent non-degenerate curves with identical signatures.}
\author{Eric Geiger and Irina A. Kogan\\ \small{North Carolina State University, Raleigh, NC, USA 27695}}

\date{}
\maketitle

\abstract{
While the equality of differential signatures (Calabi et al, Int. J. Comput. Vis. 26: 107-135, 1998) is known to be a necessary condition for congruence, it is not sufficient (Musso and Nicolodi, J. Math Imaging Vis. 35: 68-85, 2009). Hickman (J. Math Imaging Vis. 43: 206-213, 2012, Theorem 2) claimed that for non-degenerate planar curves, equality of Euclidean signatures implies congruence. We prove that while Hickman’s claim holds for simple, closed curves with simple signatures, it fails for curves with non-simple signatures. In the latter case, we associate a directed graph with the signature and show how various paths along the graph give rise to a family of non-congruent, non-degenerate curves with identical signatures. Using this additional structure, we formulate congruence criteria for non-degenerate, closed, simple curves, and show how the paths reflect the global and local symmetries of the corresponding curve.
}

\noindent
{\bf Keywords:} Closed curves; Euclidean transformations; signature curves; signature graphs (quivers); object recognition.
\vspace{.5cm}

\noindent
{\bf MSC:} 53A04, 53A55, 68T45.

\section{Introduction}
 Determining whether or not two planar curves are congruent under some group action is an important problem in geometry and has applications to computer vision and image processing. To address this problem, the signature curve parameterized by differential invariants was introduced by Calabi, Olver, Shakiban, Tannenbaum, and Haker \cite{Calabi1998} and has been used in various applied problems including medical imaging and automated puzzle assembly \cite{Bou00, Bruck1992, Bruck1995, HO14, grim-shakiban17}.
The origin of this method go back to Cartan's solution  of the group equivalence problem for submanifolds under Lie group actions \cite{C53}.   For a modern exposition see \cite[Chapter 8]{olver:purple}, in particular, the notion of classifying manifolds.

 In this paper, we  focus on immersed closed planar curves under actions of the special Euclidean group $SE(2)$ consisting of rotations and translations.
 If a curve $\Gamma$ is parameterized by a periodic map $\gamma\colon\R\to\R^2$ which is at least $C^3$-smooth, then the corresponding \emph{signature map} is defined by  $\sigma_\gamma(t)=(\kappa(t), \dot\kappa(t)), \,\, t\in\R$, where $\kappa$ is the Euclidean curvature and $\dot\kappa$ is its derivative with respect to the arc-length, explicitly given by formulas  (\ref{eq-kappa}) and (\ref{eq-dot-kappa}), in Section~\ref{sect-prelim}.
 The \emph{signature set} of $\Gamma$ (also called \emph{the signature of $\Gamma$}) is the image of $\sigma_\gamma(t)$. In \cite{Calabi1998}, it was stated that two curves are congruent if and only if their signatures are identical without explicitly mentioning the necessary regularity conditions. In the case of the special Euclidean group, these regularity conditions require that a curve has \emph{no vertices}, i.e. no points where $\dot\kappa$ is zero.
 Since every closed curve has at least two vertices and a simple closed curve has at least four vertices \cite{DeTurck2007, Guggenheimer1963}, the signature congruence criterion does not directly apply to closed curves.
 In \cite{Hoff2013}, the authors propose the following augmented procedure for deciding congruence of two closed curves: split the curves into arcs with no vertices and then decide using the signatures if each arc of the first curve can be paired with a congruent arc of the second curve. If yes, determine if pairwise arc congruence can be achieved by a \emph{common} group element.
In \cite{HO14}, the authors successfully used this multi-step procedure for solving an automated jigsaw puzzle assembly problem.  
A more  refined study of the  structure of the signature sets and signature maps  in relation  with the structure of the symmetry groupoid of a submanifold of an arbitrary dimension under an action of an arbitrary Lie group  was conducted  by  Olver in \cite{olver15}. We  point out the connection with this work  throughout the paper.
 However,  these previous works do not provide a complete answer to the  following important question: what conditions on the signature  set and the signature map  are necessary and sufficient to solve the global equivalence problem for submanifolds?  The  current paper aims to address this question in a simple, but practically important special case of closed simple planar curves under the action of the special Euclidean group.
 
 In \cite{Musso2009} (Theorem 1), Musso and Nicolodi proved that any closed phase portrait in $\R^2$ is the Euclidean signature of a 1-parameter family of non-congruent unit speed closed, at least $C^3$ smooth curves.  This family is constructed   by  inserting segments of constant curvature (degenerate vertices) into a curve. As in \cite{Hickman2012}, we say that a curve is \emph{non-degenerate} if all its vertices are isolated and call it \emph{degenerate} otherwise.  The curves with degenerate vertices are degenerate, but as Example~\ref{ex-deg}  shows, a degenerate curve  may have no degenerate vertices.

 By construction, the non-congruent families of curves with identical signatures appearing in the proof of Theorem 1 in \cite{Musso2009} contain at most one non-degenerate curve, while the rest are degenerate. A subsequent claim by Hickman in Theorem 2 of \cite{Hickman2012} stated that the Euclidean signature can be used to distinguish non-congruent non-degenerate $C^3$ smooth curves.
 In Proposition~\ref{prop-simple} of the current paper, we confirm  that Hickman's claim is true for simple closed curves  \emph{with an additional assumption that the signatures  are simple curves}\footnote{It is very common for  a simple curve to have non-simple signature.}, but as we illustrate  by examples in  Section~\ref{sect-smooth}, without this additional assumption, the claim fails. In Remark~\ref{rem-gap}, we explain why   the argument given in   \cite{Hickman2012}  is not valid for curves with non-simple signatures.
It is worth noting that the curves of a special type, \emph{cogwheels}, introduced in Section 5 of \cite{Musso2009} and    \cite{olver15} (Example 3.9) also provide counter-examples to Hickman's claim \footnote{In \cite{Musso2009} the cogwheels  are constructed  using error functions (likely for computational reasons) and, in fact, the permutation of cogs produces curves with slightly different signatures.
In \cite{olver15} the explicit formulas are not given, but the Mathematica code provided by the author shows that the construction was also done in terms of error functions. In both cases  replacing the error functions with smooth bump functions results in families of non-degenerate curves with identical signatures.}.
However,  these examples were created to show that curves with different symmetry groups may have the same  signatures and the non-degeneracy of the curves was not emphasized. Furthermore, these examples do not give full insight into a general mechanism for deciding existence of non-congruent non-degenerate curves with a given signature and for constructing such curves.

 In Section~\ref{sect-graph}, we provide a general mechanism for constructing families of non-congruent non-degenerate curves with a given closed signature  containing finitely many points of self intersection by introducing a directed graph with multiple edges (a  \emph{signature quiver}) associated with a signature. To each edge we assign a multiplicity and weight.  A path along a signature quiver respects the directions of the edges, and all paths along the same signature quiver that contain every edge of the quiver give rise to  curves with the same signature. A path also reflects the size of the global symmetry group of a curve as well as the  size of the local symmetry sets.  A deeper exploration  of how the structure   of the local symmetry groupoid \cite{olver15} is reflected in the corresponding path along the signature quiver is an interesting subject for a future investigation. 

The paper is structured as follows. In Section~\ref{sect-prelim}, we set up notations and conventions used in this paper, give the main definitions, and review important known results.
In Section~\ref{sect-cong}, we formulate and prove congruence criteria for non-degenerate closed simple curves. In particular we show that non-degenerate closed simple curves with identical \emph{simple} signatures are congruent.
In Section~\ref{sect-smooth} we construct four non-congruent, non-degenerate,
closed, simple, $C^\infty$-smooth curves of equal lengths, identical signatures, and the same signature index,
thus providing an explicit counter-example to the claim in \cite{Hickman2012}. 
In Section~\ref{sect-graph}, we associate a quiver to the signature of a curve. We use this extra structure to formulate a congruence criterion for non-degenerate curves with non-simple signatures (Theorem~\ref{thm-cong-graph}). We show how  various paths on the signature quiver can be used to generate non-congruent, non-degenerate curves with the same signature. In Proposition~\ref{prop-mlt}, we show how  a  path along a quiver encodes information about the global symmetry group  and  local symmetry sets of the corresponding curve.
In Section~\ref{sect-ex} we revisit examples found in \cite{Musso2009} and examine them using the additional structure of this quiver.

Maple code that can be used to generate families of non-degenerate non-congruent curves with identical signatures as well as specific examples in this paper can be found at \url{egeig.com/research/ncndcis}.
\section{Euclidean signatures of planar curves} \label{sect-prelim}
In this section, we set up notations and conventions used in this paper, give the main definitions, and review important known results.

We define  a planar \emph{curve} $\cv$ as the image of a \emph{continuous locally injective}\footnote{A map $\gamma\colon I \to \R^2$, where $I$ is an open  subset of $\R$ is \emph{locally injective} if for any $t\in I$, there exists an open neighborhood $J$, such that $\gamma|_J$ is injective.}  map $\gamma\colon \R\to\R^2$.   We  call $\cv$ \emph{closed} if its parameterization $\gamma$ is periodic. Occasionally, we restrict the domain of $\gamma$ to an open (or a closed) interval  $I\subset \R$ and call the image of such interval a \emph{curve piece.}\footnote{The term \emph{piece} has a different meaning in \cite{olver15}. Also, if $I$ is open, then a curve piece satisfies our definition of a  curve, but we  still use the term curve piece when we want to emphasize that the piece is a proper subset of a larger  curve. } If, in the subset topology, $\gamma(I)$ is homeomorphic to $I$, we  call the image an open (or a closed) \emph{curve segment}. If, in the subset topology, $\gamma(I)$ is homeomorphic to a circle, we  call the image a \emph{loop}.

A point $p$ of a curve (or a curve piece) $\cv$ is called  \emph{simple} if there exists an open subset of $\R^2$ containing $p$ whose intersection with $\cv$ is either homeomorphic to $\R$ or to a half-open interval. Otherwise, $p$ is called a point of \emph{self-intersection}.    We call $\cv$ \emph{simple} if all its points are simple.  A simple curve (or a simple curve piece) is a locally Euclidean connected set  in the subset topology, and therefore, is an  embedded connected   topological $1$-dimensional submanifold of $\R^2$, possibly with boundary, and  so is either an open (closed) curve segment or is a loop. An open or closed curve segment has {an injective} parameterization. A loop has  {a locally injective} parameterization. 

The congruence problem will be considered on a smaller  set of  $C^3$-smooth immersed planar curves or curve pieces, i.e. the set $\mathfrak{G}=\{\Gamma\subset\R^2| \exists I\subset\R, \gamma\colon I \to\Gamma\}$, where $I$ is homeomorphic  to $\R$ or is a closed interval,   $\gamma$ is a surjective map, the derivatives of  $\gamma(t)=(x(t),y(t))$ up to order 3 exist and are continuous,  and $\gamma$ is regular, i.e. $|\gamma'(t)|\neq 0$ for all $t$. Although a curve $\Gamma\in \mathfrak{G}$ admits an uncountable  set of regular and  non-regular parameterizations, we will  only consider its regular  $C^3$-smooth parameterizations.    We will mostly focus on simple closed curves, although some examples of non-closed or non-simple curves will appear. 
For a simple curve, the tangent vector  $\gamma'$ of a  regular parameterization  prescribes a consistent orientation.  On a non-simple  curves an orientation at self-intersection points is undefined, and, moreover, a regular parametrization may traverse the same simple curve piece in opposite    directions, thus inducing a \emph{double orientation}  at some simple points. Figure~\ref{fig:Cinf_multi-oriented} gives an example of a  non-simple curve that does not admit a regular parametrization prescribing a consistent orientation for all simple points. In the context of this paper, we call a curve \emph{oriented} if each of its simple points has a single or double orientation induced by a regular parameterization. 
 
 By $SE(2)$ we denote the special Euclidean group acting on $\R^2$ by compositions of rotations and translations. The $SE(2)$-action on the plane induces an action on the set of oriented curves  and curve pieces $\mathfrak G$, that is,  for  $g\in SE(2)$ and a curve (piece) $\Gamma\subset\R^2$, parameterized by $\gamma$, we define
 a curve (piece) $g\cdot\Gamma=\{g\cdot p\,|\,p\in \Gamma\}$  parameterized  by $g\circ\gamma$. In the context of this paper, we give the following definition of congruence:
\begin{definition} Oriented planar curves (or curve pieces) $\Gamma_1$ and $\Gamma_2$ are called \emph{congruent} ($\Gamma_1\cong\Gamma_2$) if there exists $g\in SE(2)$, such that 
 $\Gamma_2=g\cdot\Gamma_1$, where \emph{equality here means equality of the sets and orientation}. 
\end{definition}
\begin{definition} An element $g\in SE(2)$ is a \emph{symmetry of $\Gamma$} if 
$$g\cdot \Gamma=\Gamma.$$ 
It easy to show  that the set of such elements, denoted $\sym(\Gamma)$   is a subgroup of $SE(2)$ called the  \emph{symmetry group of $\Gamma$}. 
The cardinality of $\sym(\Gamma)$ is called the \emph{(global) symmetry index} of $\Gamma$ and will be denoted by $\symi(\Gamma)$. 
\end{definition}
Note that if $\Gamma$ is a closed curve, then its symmetry group is isomorphic  to a subgroup of the rotation group $SO(2)$. (See the proof of Lemma 4 in \cite{Musso2009} for the explicit formula for the center of rotations).
Following \cite{olver15} (Definition 2.1), we  introduce the notion of a local symmetry of $\Gamma$ based at a point.
\begin{definition}\label{def-loc-sym} An element $g\in SE(2)$ is a \emph{local symmetry based at a point $p\in \Gamma$} if there is an open subset $U\subset \R^2$, containing $p$, such that
\beq \label{eq-loc-sym} g\cdot(\Gamma\cap U)=\Gamma\cap (g\cdot U).\eeq 
The set of such elements is denoted $\sym(\Gamma_p)$. 
We  call the cardinality of $\sym(\Gamma_p)$ the \emph{(local) symmetry index} of $\Gamma$ at $p$ and  denote it by $\symi(\Gamma_p)$\footnote{In  \cite{olver15}, in a much more general setting, the symmetry index at $p$ is defined as the cardinality of the set $\sym(\Gamma_p)/\sym^*(\Gamma_p)$, where  $\sym^*(\Gamma_p)$ denotes the subset of $\sym(\Gamma_p)$ consisting of elements that fix $p$. However, if $p$ is not a point of self-intersection and $\Gamma$ is oriented, then 
under the action of $SE(2)$ the group $\sym^*(\Gamma_p)$ is trivial.}
\end{definition}

In contrast to the set $\sym(\Gamma)$ of  global symmetries, the set of local symmetries  $\sym(\Gamma_p)$, in general, does not form a group, but a disjoint union of all such sets  has a structure of a \emph {groupoid} (see Definition 2.4 in \cite{olver15}).  We also note that, even for closed curves,  $\sym(\Gamma_p)$ may not be  contained in $SO(2)$ (see Remark~\ref{rem-loc-sym} for an illustration).


If $\Gamma$ is a simple curve with a parameterization $\gamma(t)=(x(t),y(t))$, then its  classical (signed) Euclidean curvature at a point $p=\gamma(t)$ is
\beq\label{eq-kappa}\kappa(t)=\frac{\det(\gamma'(t),\gamma''(t))}{|\gamma'(t)|^3}=\frac{x'(t)y''(t)-y'(t)x''(t)}{(x'(t)^2+y'(t)^2)^{\frac 3 2 }}. \eeq
If $\Gamma$ is not simple, then although $\kappa(t)$ is well defined for all $t$, $\kappa(p)$ can take multiple values  at a point of self-intersection, or at a simple point with a double orientation. 
The arc-length one-form is given by 
\beq\label{eq-ds_1}ds=|\gamma'| dt=\sqrt{ x'(t)^2+y'(t)^2}\,dt. \eeq
By $\dot {\phantom{\kappa}}$ we will denote the derivative with respect to the arc-length 
\beq\label{eq-ds_2}\frac d{ds}=\frac 1{\sqrt{ x'^2+y'^2}}\frac d{dt}\eeq and in particular we have
\beq\label{eq-dot-kappa} \dot{\kappa}=\frac{(\gamma'\cdot\gamma')\det(\gamma',\gamma''')-3(\gamma'\cdot\gamma'')\det(\gamma',\gamma'')}{|\gamma'|^6},
 \eeq
 where for better readability we omitted parameter $t$. It can be easily verified that the value of $\kappa(p)$, and $\dot\kappa(p)$ does not depend on the parameterization\footnote{As long as the parametrization is consistent with a chosen  orientation of $\Gamma$. The change in the orientation sends $\kappa(p)$  to $-\kappa(p)$.}  and so each of these functions can be thought as a function from $\Gamma$ to $\R$. If $\Gamma$ is parameterized by the arc-length parameter 
$s(t)=\int_0^t|\gamma'(\tau)| d\tau$
then $\tilde\gamma(s)=\gamma(t(s))$ is a unit speed curve ($|\tilde\gamma'(s)|=1$) and  for $\tilde\gamma(s)$, $\dot \kappa(s)=\kappa'(s)$.
If $\Gamma$ is closed and parameterized by its arc-length then {the minimal period of $\gamma$,} $L$, is the length of $\Gamma$.
The following classical result is fundamental to the study of planar curves in Euclidean Geometry (see, for instance, Theorem 2-10 in \cite{Guggenheimer1963}). 
\begin{proposition} \label{prop-cong} Let $\gamma_1$ and $\gamma_2$ be unit speed parameterizations of curves $\Gamma_1$ and $\Gamma_2$ with corresponding curvature functions $\kappa_1\colon \R\to \R$, and $\kappa_2\colon \R\to \R$.
  If there exists $c\in\R$, such that $\kappa_1(s)=\kappa_2(s+c)$ then $\Gamma_1$ and $\Gamma_2$ are congruent.
  The converse is true if $\Gamma_1$ and $\Gamma_2$ are simple.
\end{proposition}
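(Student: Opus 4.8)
The plan is to prove the two implications separately, using the ``fundamental theorem of plane curves'' for the forward direction and parameterization rigidity for the converse.

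For the forward implication, I would first reduce to $c=0$: the map $s\mapsto\gamma_2(s+c)$ is again a unit speed parameterization of $\Gamma_2$ whose curvature function is $s\mapsto\kappa_2(s+c)=\kappa_1(s)$, so after this replacement we may assume $\kappa_1=\kappa_2=:\kappa$. For a unit speed curve, write $\gamma_i'(s)=(\cos\theta_i(s),\sin\theta_i(s))$ with $\theta_i$ a continuous (indeed $C^1$) turning angle; differentiating and comparing with (\ref{eq-kappa}) gives $\theta_i'=\kappa$, hence $\theta_1-\theta_2$ is a constant $\phi$. Therefore $\gamma_1'(s)=R_\phi\gamma_2'(s)$ for the rotation $R_\phi\in SO(2)$, and integrating from a basepoint $s_0$ yields $\gamma_1(s)=R_\phi\gamma_2(s)+w$ with the constant vector $w=\gamma_1(s_0)-R_\phi\gamma_2(s_0)$. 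Thus $g=(R_\phi,w)\in SE(2)$ satisfies $\gamma_1=g\circ\gamma_2$ on all of $\R$, so $\Gamma_1=g\cdot\Gamma_2$ as sets, with matching orientations since the two parameterizations literally coincide after applying $g$; hence $\Gamma_1\cong\Gamma_2$. Note that no simplicity hypothesis is used here.

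For the converse, assume $\Gamma_1,\Gamma_2$ are simple and $\Gamma_2=g\cdot\Gamma_1$ for some $g\in SE(2)$, with equality of orientations. I would record two ingredients. \emph{(i) Invariance of curvature.} If $\gamma$ is a unit speed parameterization and $g\in SE(2)$, then $g\circ\gamma$ is again unit speed, and because $g$ acts on the derivatives of $\gamma$ through its orthogonal, determinant-one linear part, formula (\ref{eq-kappa}) shows that $g\circ\gamma$ has the same curvature function as $\gamma$; equivalently, the curvature \emph{point}-function on a simple oriented curve (well defined, as noted in the text following (\ref{eq-dot-kappa})) satisfies $\kappa_{g\cdot\Gamma}(g\cdot p)=\kappa_\Gamma(p)$. \emph{(ii) Rigidity of unit speed parameterizations.} A simple curve, in the subset topology, is an embedded $1$-manifold (an open or closed curve segment, or a loop); once an orientation is fixed, any two unit speed parameterizations compatible with it differ by a parameter shift $s\mapsto s+c$ — for a segment because such a parameterization is pinned down by its value and its direction at one point, and for a loop by the same argument modulo the period.

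Then the proof closes quickly: choose a unit speed parameterization $\gamma_1$ of $\Gamma_1$ compatible with its orientation; then $g\circ\gamma_1$ is a unit speed parameterization of $\Gamma_2=g\cdot\Gamma_1$ compatible with $\Gamma_2$'s orientation (as $g$ is orientation preserving), so by (ii) there is $c\in\R$ with $\gamma_2(s)=g\circ\gamma_1(s+c)$. Evaluating curvatures along these parameterizations and invoking (i), $\kappa_2(s)=\kappa(g\circ\gamma_1(s+c))=\kappa(\gamma_1(s+c))=\kappa_1(s+c)$, i.e.\ $\kappa_1(s)=\kappa_2(s-c)$, which is the asserted relation after renaming the constant. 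The main obstacle I anticipate is the bookkeeping in ingredient (ii): one must argue carefully that \emph{simplicity} (not merely regularity) is what forces unit speed parameterizations to be unique up to a shift, keeping the loop and segment cases separate, handling the matching of interval domains for non-closed pieces, and using ``compatible with the orientation'' consistently so that the reparameterization is the pure translation $s\mapsto s+c$ rather than an orientation-reversing $s\mapsto -s+c$.
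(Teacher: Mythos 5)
Your proof is correct, but note that the paper does not actually prove Proposition~\ref{prop-cong}: it is stated as a classical result with a pointer to Theorem 2-10 of \cite{Guggenheimer1963}, so there is no in-text argument to compare against. What you have written is a sound, self-contained rendition of the standard proof. The forward direction (reduce to $c=0$, lift the unit tangent to a turning angle $\theta_i$ with $\theta_i'=\kappa$ via (\ref{eq-kappa}), conclude $\theta_1-\theta_2$ is constant, and integrate to produce $g=(R_\phi,w)\in SE(2)$) is exactly the fundamental theorem of plane curves, and you correctly observe that simplicity plays no role there. For the converse, your two ingredients are the right ones, and in particular ingredient (ii) — that a \emph{simple} oriented curve admits a unit speed parameterization that is unique up to a parameter shift, argued separately for segments and loops — is precisely the point where simplicity enters; this matches the paper's own discussion in Remark~\ref{rem-flower} and Figure~\ref{fig:flower}, which exhibit a non-simple curve with orientation-compatible unit speed parameterizations not related by translation, showing the converse genuinely fails without that hypothesis. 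The only cosmetic discrepancy is that you obtain $\kappa_1(s)=\kappa_2(s-c)$ rather than $\kappa_1(s)=\kappa_2(s+c)$, which you correctly dispose of by renaming the constant.
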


   \begin{remark}\label{rem-flower}
   Figure~\ref{fig:flower} illustrates why we need the simple condition for the second statement in Proposition~\ref{prop-cong}. Due to the non-transversal intersections, this curve allows several unit speed parameterizations which induce the same orientation, but are not related by translation, e.g. we can first trace the middle and then the petals, or conversely, or interlace traveling along  the middle and the petals. The  the corresponding curvatures as  functions of  the arc-length are not related by a translation, but  the flower with a chosen orientation is congruent to itself according to our definition.  We will encounter a similar example later on (see Figure~\ref{fig:Cinf_notSimple_sym3_1}).
   \end{remark}
\begin{figure}
  \centering
  \includegraphics[width=6cm]{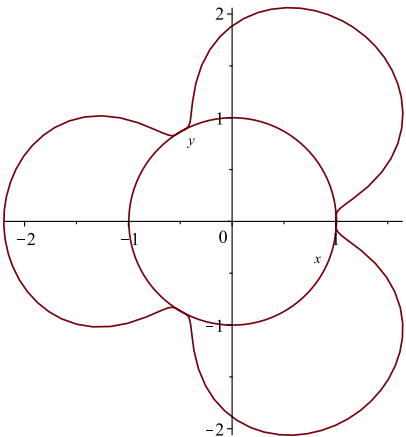}
  \caption{A curve that allows different unit speed parameterizations with the same orientation}
  \label{fig:flower}
\end{figure}

Additionally, we collect the following known facts, \cite{kuhnel2015, Musso2009}, that we will use in the paper:
\begin{proposition}\label{prop-integrals} Let $\Gamma$ be a closed curve with arc-length parameterization $\gamma(s)$, and let $\kappa(s)$ be the corresponding curvature function. Let $L$ and $\ell$ be  the {minimal} periods of $\gamma$ and $\kappa$ respectively. Then
\begin{enumerate}
 \item $\ds{\int_0^L}\kappa(s)ds=2\pi \xi$, where $\xi$ is the turning number of $\Gamma$.
\item If $\Gamma$ is simple, then $\ds{\int_0^\ell}\kappa(s)ds=\frac {2\pi} m$, where $m=\frac L \ell =\symi(\Gamma)$.\end{enumerate} 

\end{proposition}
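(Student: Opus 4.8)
The plan is to prove both statements by analyzing how the curvature function $\kappa$ wraps around as $s$ traverses one period of the curve. For statement (1), I would start from the fact that the unit tangent vector of $\gamma$ can be written as $\gamma'(s) = (\cos\theta(s), \sin\theta(s))$ for a continuous \emph{turning angle} function $\theta$, and that by definition of the signed curvature in arc-length parameterization we have $\theta'(s) = \kappa(s)$. Since $\gamma$ is closed with minimal period $L$, the tangent vector is $L$-periodic, hence $\gamma'(L) = \gamma'(0)$, which forces $\theta(L) - \theta(0) \in 2\pi\Z$. Writing this integer as $\xi$ (the turning number / rotation index of $\Gamma$), we get $\int_0^L \kappa(s)\,ds = \theta(L) - \theta(0) = 2\pi\xi$, which is statement (1). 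I would remark that this is just the classical rotation index formula and cite \cite{kuhnel2015, Guggenheimer1963} for the fact that $\theta$ exists and is well-defined up to an additive constant in $2\pi\Z$.

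For statement (2), the key observation is that $\ell$ is the minimal period of $\kappa$ and $m = L/\ell$, so $m$ is a positive integer (since $\gamma$ has period $L$, $\kappa$ has period $L$ as well, hence $\ell \mid L$). Over one $\kappa$-period the increment of the turning angle, $\Delta := \int_0^\ell \kappa(s)\,ds$, is the same on every successive block $[k\ell, (k+1)\ell]$ because $\kappa$ is $\ell$-periodic; summing $m$ such blocks and using statement (1) gives $m\Delta = 2\pi\xi$, so $\Delta = 2\pi\xi/m$. The content of statement (2) is therefore the two claims that, \emph{when $\Gamma$ is simple}, (a) $\xi = \pm 1$ (so after orienting appropriately $\xi = 1$), and (b) $m = \symi(\Gamma)$. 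Claim (a) is the Hopf Umlaufsatz: a simple closed curve has rotation index $\pm 1$; I would cite this. For claim (b), I would argue as follows: shifting the arc-length parameter by $\ell$ produces a parameterization $\tilde\gamma(s) = \gamma(s+\ell)$ with curvature function $\kappa(s+\ell) = \kappa(s)$, so by Proposition~\ref{prop-cong} there is $g \in SE(2)$ with $g\cdot\Gamma = \Gamma$, i.e. $g \in \sym(\Gamma)$; moreover $g$ is a rotation by angle $\Delta = 2\pi/m$ about the appropriate center (using that closed-curve symmetries lie in $SO(2)$, as noted after the symmetry-group definition, and Lemma 4 of \cite{Musso2009} for the center). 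This gives a cyclic subgroup of order $m$ inside $\sym(\Gamma)$, so $\symi(\Gamma) \ge m$. Conversely, any symmetry $h \in \sym(\Gamma)$ is a rotation that, composed with an arc-length shift, must fix the curvature function; since $\kappa$ has minimal period $\ell$, the corresponding shift is a multiple of $\ell$, and the shift amount is determined modulo $L$, which forces $h$ to lie in the cyclic group generated by $g$, giving $\symi(\Gamma) \le m$.

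The main obstacle I anticipate is the careful bookkeeping in claim (b): one must use \emph{minimality} of the period $\ell$ of $\kappa$ in an essential way, and one must be careful that for a \emph{simple} closed curve an arc-length parameterization is unique up to the choice of base point and orientation (unlike the non-simple situation warned about in Remark~\ref{rem-flower}), so that the correspondence between symmetries in $SE(2)$ and shifts of the arc-length parameter modulo $L$ is genuinely a bijection. Once that correspondence is pinned down, identifying $\sym(\Gamma)$ with $(\ell\Z)/(L\Z) \cong \Z/m\Z$ is routine, and the equality $\int_0^\ell \kappa = 2\pi/m$ follows by combining this with the turning-number computation from part (1) and the Umlaufsatz. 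I would streamline the write-up by proving part (1) in one line, then noting $m \in \Z_{>0}$, then doing the symmetry correspondence, then reading off the integral.
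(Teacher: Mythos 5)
The paper does not actually prove Proposition~\ref{prop-integrals}: it is presented as a collection of known facts with citations to \cite{kuhnel2015, Musso2009}, so there is no internal proof to compare against. Your reconstruction is correct and is the standard argument those references supply: part (1) is the rotation-index formula via the turning angle $\theta$ with $\theta'=\kappa$, and part (2) combines the Hopf Umlaufsatz ($\xi=\pm1$ for simple closed curves) with the correspondence between symmetries of a simple closed curve and arc-length shifts preserving $\kappa$; the latter is essentially the content of Lemma~4 of \cite{Musso2009}, which the paper invokes elsewhere. Two small points you should make explicit. First, the proposition implicitly excludes the circle: if $\kappa$ is constant, the minimal period $\ell$ is undefined and $\symi(\Gamma)$ is infinite, so assume $\kappa$ non-constant; then the set of periods of the continuous function $\kappa$ is exactly $\ell\Z$, which is what justifies $m=L/\ell\in\Z_{>0}$. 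Second, in the upper bound $\symi(\Gamma)\le m$ you pass from ``$h$ and the shift-by-$k\ell$ symmetry agree on $\Gamma$'' to ``they are equal in $SE(2)$''; this needs the observation that two direct isometries agreeing on a set not contained in a single point (e.g.\ on two distinct points of $\Gamma$) coincide. You should also note that a symmetry in $SE(2)$ of a simple closed curve automatically preserves its orientation (it preserves the orientation of the plane and hence of the Jordan curve as boundary of its interior), so the converse direction of Proposition~\ref{prop-cong} applies as you use it. With these remarks the argument is complete.
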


To construct many of our examples, we exploit the following two known lemmas.
\begin{lemma}\label{lemma-cong}
 Given any  function $\kappa\colon \R\to \R$ of class $C^h$, $h\geq 0$, there exists a unique curve $\Gamma$ with a unit speed parameterization $\gamma\colon \R\to \R^2$ of class  $C^{h+2}$, such that $\gamma(0)=(0,0)$, $\dot{\gamma}(0)=(1,0)$, and $\kappa(s)$ is the curvature of $\Gamma$ at the point $\gamma(s)$. \end{lemma}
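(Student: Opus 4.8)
The plan is to run the classical ``fundamental theorem of plane curves'' construction, being careful about the regularity bookkeeping and about the uniqueness clause. First I would introduce the turning angle
\beq
 \theta(s)=\int_0^s \kappa(\tau)\,d\tau,
\eeq
which is well defined and of class $C^{h+1}$ because $\kappa$ is $C^h$, and satisfies $\theta(0)=0$. I would then define
\beq
 \gamma(s)=\left(\int_0^s \cos\theta(\tau)\,d\tau,\ \int_0^s \sin\theta(\tau)\,d\tau\right).
\eeq
Since $\cos\theta$ and $\sin\theta$ are $C^{h+1}$, the map $\gamma$ is $C^{h+2}$, and by construction $\gamma(0)=(0,0)$.

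Next I would verify the three required properties by direct differentiation. We have $\dot\gamma(s)=(\cos\theta(s),\sin\theta(s))$, so $|\dot\gamma(s)|\equiv 1$ (so $s$ is indeed an arc-length parameter) and $\dot\gamma(0)=(1,0)$. Differentiating once more, $\ddot\gamma(s)=\kappa(s)\,(-\sin\theta(s),\cos\theta(s))$, and since $|\dot\gamma|\equiv 1$, formula (\ref{eq-kappa}) gives the curvature of $\gamma$ at $\gamma(s)$ as $\det(\dot\gamma(s),\ddot\gamma(s))=\kappa(s)(\cos^2\theta(s)+\sin^2\theta(s))=\kappa(s)$, as desired. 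This establishes existence, and $\Gamma$ is the image of $\gamma$.

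For uniqueness I would take any unit speed $C^{h+2}$ parameterization $\tilde\gamma$ of a curve $\tilde\Gamma$ with curvature $\kappa(s)$ at $\tilde\gamma(s)$, with $\tilde\gamma(0)=(0,0)$ and $\dot{\tilde\gamma}(0)=(1,0)$. Because $|\dot{\tilde\gamma}|\equiv 1$, there is a continuous (indeed $C^{h+1}$) lift $\tilde\theta$ with $\dot{\tilde\gamma}(s)=(\cos\tilde\theta(s),\sin\tilde\theta(s))$ and $\tilde\theta(0)=0$; then $\ddot{\tilde\gamma}=\dot{\tilde\theta}\,(-\sin\tilde\theta,\cos\tilde\theta)$ and the curvature condition forces $\dot{\tilde\theta}(s)=\kappa(s)$ for all $s$, hence $\tilde\theta(s)=\theta(s)$. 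Therefore $\dot{\tilde\gamma}\equiv\dot\gamma$, and integrating with the common initial value $\tilde\gamma(0)=\gamma(0)$ yields $\tilde\gamma\equiv\gamma$. I expect the only genuinely delicate point to be the existence and regularity of the angular lift $\tilde\theta$ in the uniqueness argument (and the observation that the sign in $\dot{\tilde\theta}=\pm\kappa$ is pinned down to $+\kappa$ by the orientation/curvature sign convention in (\ref{eq-kappa})); everything else is routine differentiation and the elementary regularity chain $C^h\to C^{h+1}\to C^{h+2}$.
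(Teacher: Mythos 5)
Your proposal is correct and follows essentially the same route as the paper: both construct $\gamma$ explicitly by integrating the unit tangent $(\cos\theta,\sin\theta)$ with $\theta(s)=\int_0^s\kappa$, which is exactly formula (\ref{eq-gamma}), with the same regularity bookkeeping $C^h\to C^{h+1}\to C^{h+2}$. The only cosmetic difference is that the paper gets uniqueness by citing uniqueness of solutions to the Frenet--Serret linear ODE with initial condition $T(0)=(1,0)$, whereas you rederive it via the angular lift of the competitor's tangent; these are interchangeable justifications of the same step.
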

 \begin{proof}
To find $\gamma(s)$, explicitly, we note that the vector-function 
\begin{equation}\nonumber\label{F-Ssol}
  T(s) = \left(\cos\eta(s), \sin\eta(s)\right),\text{ where } \eta(s) = \int_0^s \kappa(\tau)d\tau,
\end{equation}
is the unique $C^{h+1}$-smooth solution of the Frenet-Serret equation:
\begin{equation}\label{F-Seq}
  \dot{T}(s) = T(s)\left(\begin{matrix} 0 & \kappa(s) \\ -\kappa(s) & 0\end{matrix}\right)
\end{equation}
with initial conditions $T(0)=(1,0)$. 
The curve 
 \beq\label{eq-gamma} \gamma(s)=\left ( \int_0^s \cos\eta(\tau)d\tau, \, \int_0^s \sin\eta(\tau)d\tau\right)\eeq
 is the unique  $C^{h+2}$-solution to $\gamma'(s)=T(s)$ with initial condition $\gamma(0)=(0,0)$. Since $|T(s)|=1$, then $s$ is the arc-length parameter and by construction $\kappa(s)$ is the curvature of $\Gamma$ at the point $\gamma(s)$. 
  \end{proof}

  The above lemma is well known and is discussed at the beginning of Section 2 of \cite{Musso2009}. We included its full proof because we use (\ref{eq-gamma}) and its numerical approximation for constructing explicit examples. The proof of the following lemma appears in \cite{Musso2009} and is useful for identifying and constructing closed curves.

\begin{lemma}\label{lem-closed}
  (Musso and Nicolodi \cite{Musso2009}, Lemma 4)\footnote{In \cite{Musso2009}, $\kappa$ is assumed to be $C^1$, but the proof is valid for  a continuous function $\kappa$.}
  
  Let  $\kappa : \mathbb{R} \to \mathbb{R}$ be a  periodic continuous function, with minimum period $\ell$.
  If
  \beq \label{eq-closed}\frac{1}{2\pi}\int_0^\ell \kappa(s)ds = \frac{\tn}{m},\eeq
  where $m$ and $\tn$ are relatively prime integers and $m>1$,
  then  the map   $\gamma\colon\R\to\R$, given by (\ref{eq-gamma}) is a unit-speed parametrization of a closed  curve $\Gamma$ of length $m\ell$.

\end{lemma}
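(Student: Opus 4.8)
The plan is to follow the standard "closing-up" argument for curves of prescribed periodic curvature, using Proposition~\ref{prop-integrals} and the rotation structure of the tangent map. First I would write $\eta(s)=\int_0^s\kappa(\tau)\,d\tau$ as in Lemma~\ref{lemma-cong}, so that $\gamma'(s)=T(s)=(\cos\eta(s),\sin\eta(s))$. The key computation is the behavior of $\eta$ under a shift by the period $\ell$: since $\kappa$ is $\ell$-periodic,
\begin{equation}\nonumber
\eta(s+\ell)-\eta(s)=\int_s^{s+\ell}\kappa(\tau)\,d\tau=\int_0^\ell\kappa(\tau)\,d\tau=\frac{2\pi\tn}{m},
\end{equation}
a constant independent of $s$. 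Hence $T(s+\ell)=R_{\theta}\,T(s)$ where $R_\theta$ is rotation by $\theta=2\pi\tn/m$, and by induction $T(s+k\ell)=R_{k\theta}\,T(s)$ for every integer $k$. In particular, after $m$ steps, $m\theta=2\pi\tn\equiv 0\pmod{2\pi}$, so $R_{m\theta}$ is the identity and $T(s+m\ell)=T(s)$ for all $s$: the tangent indicatrix closes up with period $m\ell$.

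Next I would show that $\gamma$ itself is $m\ell$-periodic, which amounts to proving the closure condition $\int_0^{m\ell}T(\tau)\,d\tau=(0,0)$. Writing the integral as a sum of $m$ blocks and using $T(\tau+k\ell)=R_{k\theta}T(\tau)$,
\begin{equation}\nonumber
\int_0^{m\ell}T(\tau)\,d\tau=\sum_{k=0}^{m-1}\int_0^{\ell}T(\tau+k\ell)\,d\tau=\left(\sum_{k=0}^{m-1}R_{k\theta}\right)\int_0^{\ell}T(\tau)\,d\tau.
\end{equation}
Since $\theta=2\pi\tn/m$ with $\gcd(\tn,m)=1$ and $m>1$, the rotation $R_\theta$ has order exactly $m$, so $\sum_{k=0}^{m-1}R_{k\theta}$ is the sum over a nontrivial finite cyclic subgroup of $SO(2)$ and therefore equals the zero matrix (equivalently, $\sum_{k=0}^{m-1}e^{ik\theta}=\frac{e^{im\theta}-1}{e^{i\theta}-1}=0$ because $e^{im\theta}=1$ but $e^{i\theta}\neq1$). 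Thus $\int_0^{m\ell}T=0$, giving $\gamma(s+m\ell)=\gamma(s)$ for all $s$, so $\gamma$ is periodic and $\Gamma$ is a closed curve. That $\gamma$ is a unit-speed parametrization is immediate from $|T(s)|=1$, already noted in Lemma~\ref{lemma-cong}, and the length of one period is $\int_0^{m\ell}|\gamma'|\,ds=m\ell$.

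The last point is that the minimal period of $\gamma$ is exactly $m\ell$ (so that the length really is $m\ell$ and not a proper divisor). Here I would argue that any period $p$ of $\gamma$ forces $T=\gamma'$ to be $p$-periodic, hence $\kappa$ — recoverable from $T$ via $\kappa=\det(T,\dot T)$ — is $p$-periodic, so $p$ is a multiple of the minimal period $\ell$ of $\kappa$, say $p=j\ell$. Then closure of $\gamma$ over $j\ell$ together with $T(s+j\ell)=R_{j\theta}T(s)$ forces, after integrating $T$ over the period and using that $\int_0^\ell T(\tau)\,d\tau\neq 0$ (this needs a small justification: if it were zero then $\gamma$ would already be $\ell$-periodic and we can track where minimality actually lands), the relation $\sum_{k=0}^{j-1}R_{k\theta}=0$, which for $\theta=2\pi\tn/m$ happens precisely when $m\mid j$; minimality then gives $p=m\ell$. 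I expect this minimality clause to be the only genuinely delicate part — the closure itself is a clean geometric-series-of-rotations computation — and in fact the statement as written only asserts the curve is closed of length $m\ell$, so if one reads "length" as the period produced by the construction, the minimality discussion can be kept brief or deferred to the cited proof in \cite{Musso2009}.
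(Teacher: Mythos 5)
The paper does not reprove this lemma: it explicitly defers to Lemma 4 of \cite{Musso2009} ("The proof of the following lemma appears in \cite{Musso2009}..."), so there is no in-paper proof to compare against. Your argument is the standard one (and essentially the one in the cited reference), and it is correct: the identity $\eta(s+\ell)-\eta(s)=2\pi\xi/m$ gives $T(s+\ell)=R_\theta T(s)$ with $\theta=2\pi\xi/m$, and since $m>1$ and $\gcd(\xi,m)=1$ force $e^{i\theta}\neq 1$ while $e^{im\theta}=1$, the geometric sum $\sum_{k=0}^{m-1}R_{k\theta}$ vanishes and $\gamma(s+m\ell)=\gamma(s)$. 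One small improvement on your minimality discussion: you do not need the condition $\int_0^\ell T(\tau)\,d\tau\neq 0$ at all. If $p$ is any period of $\gamma$, then $T=\gamma'$ is $p$-periodic, hence so is $\kappa=\det(T,\dot T)$, so $p=j\ell$; but then $T(s)=T(s+j\ell)=R_{j\theta}T(s)$ with $|T(s)|=1$ forces $R_{j\theta}=\mathrm{Id}$, i.e.\ $m\mid j\xi$, i.e.\ $m\mid j$ since $\gcd(\xi,m)=1$. This shows directly that the minimal period is $m\ell$, which is what the paper's convention requires for the length of $\Gamma$ to equal $m\ell$, and it sidesteps the case analysis you flagged as delicate.
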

It follows then from Proposition~\ref{prop-integrals} that $\tn$ is the turning number of $\gamma$ over  the interval $[0,m\ell]$, and if $\Gamma = Im(\gamma)$ is simple, then $\tn = 1$ and $m$ is the symmetry index of $\Gamma$.
Note that if $m = 1$ and $\gamma$ is closed, then $L = \ell$.
However, it is possible for $\gamma$ to be open when $m=1$ (see examples in Section \ref{sect-smooth}).

As in \cite{Calabi1998}, we define the \emph{Euclidean signature} (SE(2)-signature)
\footnote{Strictly speaking we should call this signature the ``special Euclidean signature'', because the full Euclidean group includes reflections and both $\kappa$ and $\dot\kappa$ change sign under reflections and are, therefore, not invariant.
We also note, that if we reverse a curve's orientation then $\kappa$ changes its sign, but $\dot\kappa$ preserves the sign. Thus change in the orientation induces the  reflection of the signature about the vertical axis. } of $\Gamma$ to be the planar curve parameterized by $(\kappa(t),\dot{\kappa}(t))$ (we remind that $\dot {\phantom{\kappa}}$ denotes the derivative with respect to the arc-length defined by the operator \eqref{eq-ds_2}).  In fact, we will use the following related notions: 

\begin{definition}\label{def-sig}The \emph{signature map} of a simple curve $\Gamma$ is the map $\sigma_{\Gamma}\colon \Gamma\to\R^2$, defined by $\sigma_\Gamma(p)=(\kappa(p),\dot\kappa(p))$. The \emph{signature} (the \emph{signature set}) of $\Gamma$ is the image of this map: $S_{\Gamma}=\mathrm{Im}{\sigma_\Gamma}$. For a given parameterization $\gamma\colon\R\to\Gamma$,  a \emph{parameterized signature map} $\sigma_\gamma \colon \R\to S_{\Gamma}$ is defined by  $\sigma_\gamma(t)=(\kappa(t),\dot{\kappa}(t))$.
\end{definition}
If $\Gamma$ is not simple, then  $\sigma_\gamma$ is a well defined continuous  map, but $\sigma_\Gamma$ can be multivalued at the self-intersection points and at simple points with double orientation. The latter is because  $\kappa$ changes its sign when an orientation is reversed and so each point with double orientation will give rise to two points on the signature. If $\Gamma$ is simple, then $\sigma_\gamma=\sigma_\Gamma\circ \gamma$. Definition~\ref{def-sig} can also be extended to curve segments. If $\Gamma$ is a circle (a circular arc) or a straight line (a straight segment), then the function $\kappa$ is constant and so the signature degenerates to a point. Otherwise the signature of $\Gamma$ is a non-constant  curve in   $\R^2$. 
\begin{remark}\label{rem-sig}
Since for an arc-length parameterization of a curve, $\dot \kappa(s)=\kappa'(s)$, any signature curve is a \emph{phase portrait} -- a curve  that can be parameterized by a function and its derivative: $s\to (f(s),f'(s))$. A phase portrait can only be traveled to the right when it is above the horizontal axis and to the left if it is below.  Therefore, the \emph{orientation} of  signature curve is uniquely defined, and in particular   the signature of  a closed curve is a parameterized closed curve with a clockwise orientation. \end{remark} 

It is known that $\kappa$ and $\dot \kappa$ are invariant with respect to actions of $SE(2)$. Namely if $\tilde \Gamma=g\cdot\Gamma$ for some $g\in SE(2)$ then for any $p\in \Gamma$, we have
$$\kappa_\Gamma(p)=\kappa_{\tilde\Gamma}(g\cdot p) \text{ and } \dot\kappa_\Gamma(p)=\dot\kappa_{\tilde\Gamma}(g\cdot p). $$
It immediately follows that the signatures of congruent curves are identical. As it is shown by examples in \cite{Hickman2012}, \cite{Musso2009}, and \cite{olver15}, the converse is not always true: non-congruent closed simple curves may have identical signatures.
So it is natural to ask under what conditions, or with what additional information, signatures can be used to distinguish non-congruent curves. The notion of a \emph{curve-vertex} is important in addressing this question.

\begin{definition} \label{def-vertex}A point $p\in \Gamma$ is called a \emph{vertex} if $\dot\kappa(p)=0$. A curve with isolated vertices is called \emph{non-degenerate} and otherwise is called \emph{degenerate}.
\end{definition}

Curve pieces of constant curvature, circular arcs and straight segments, are referred to as  \emph{degenerate vertices}, and the curves containing such pieces are degenerate.  The following example shows a degenerate curve with no degenerate vertices. 

 \begin{example} \label{ex-deg}
   Consider the $C^3$-smooth curve 
   \beq\label{eq-degen}\gamma(t) = \begin{cases} (t, t^6\sin(\frac1t)) & t \in (\frac{-1}{4\pi},0) \cup (0,\frac{1}{4\pi}) \\ (0,0) & t = 0 \end{cases}\eeq
   whose curvature function is shown in Figure~\ref{fig:deg}.
   While this curve does not contain any degenerate vertices, at $t=0$ there is a non-isolated vertex.
\begin{figure}
  \centering
  \includegraphics[width=6cm]{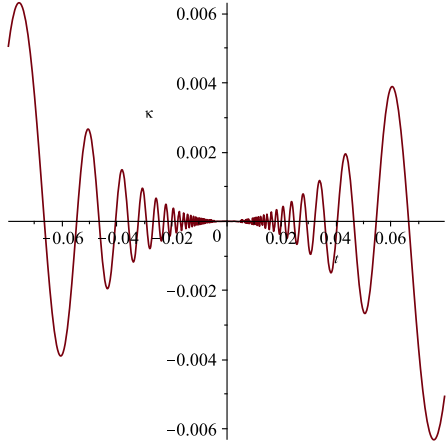}
  \caption{Curvature $\kappa(t)$ of a degenerate curve \eqref{eq-degen}.}
  \label{fig:deg}
\end{figure}

 \end{example}

As discussed in \cite{Hoff2013}, signatures of curves with no vertices distinguish non-congruent curves. In fact, the proof stated in \cite{Calabi1998} is valid in this case.
However, every closed curve has at least two vertices and \emph{simple closed curves have at least four vertices} (see \cite{DeTurck2007} for a very informative exposition of the four vertex theorem and its converse, as well as Theorem 2-18 in \cite{Guggenheimer1963}).

\begin{remark}
If a curve has a degenerate vertex (a piece of constant curvature), then  the corresponding intervals on the parameter space map to a point on the horizontal axis.  The signature map of a non-degenerate curve with a regular parametrization is \emph{locally injective}.
\end{remark}
Theorem 1 proven in \cite{Musso2009} states that, in fact, any closed phase portrait is the signature of a 1-parameter family of non-congruent $C^3$-smooth closed curves. These families are constructed by inserting degenerate vertices into a given curve  and so, by the above remark, these new curves have the same signature as the original curves, but by  construction each family contains \emph{at most one} non-degenerate curve. 
In the current paper, we provide a general mechanism for constructing families  of \emph{non-degenerate curves} with the same signature. 
 \begin{remark}\label{rem-closed-phase}  The signature of a closed curve is always closed, but the converse is not true.  In Figure~\ref{fig:sinSig}, the signature of the sine  curve $\gamma(t)=(t, \sin t)$ is pictured.  Although, according to Theorem 1 in \cite{Musso2009}, every closed phase portrait $(\kappa(s),\dot \kappa(s))$ is the signature of a family of closed curves,  this family is not guaranteed  to contain  either a simple or non-degenerate curve. In this example, since the integral of the curvature function over its minimal period is 0, there are   no closed, simple, non-degenerate curves with the signature pictured in  Figure~\ref{fig:sinSig}. Identifying closed phase portraits that are signatures of closed, non-degenerate, simple curves is an interesting problem.\footnote{For a sufficient condition see Corollary~5 in \cite{Musso2009}.}
 \end{remark}

 \begin{figure}
   \centering
   \includegraphics[width=6cm]{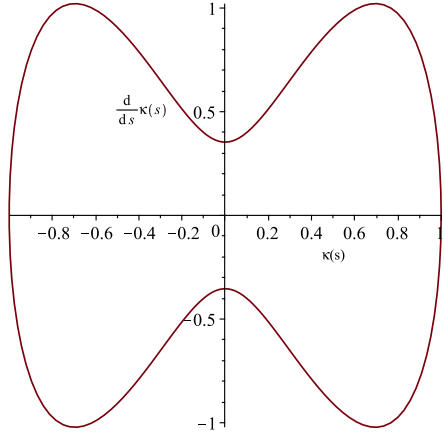}
   \caption{A closed phase portrait that is not the signature of any non-degenerate closed curve.}
   \label{fig:sinSig}
 \end{figure}
 As seen in examples in the following sections, the signature of a simple curve does not have to be simple, and different points on the signature may have an unequal number of preimages under the signature map.  This more refined information is reflected by local and global indices of the signature map.  Following \cite{Musso2009} and \cite{olver15} we define 

\begin{definition}\label{def-sig-index}
  The  \emph{(local) signature index} of a curve $\Gamma$ at $p\in\Gamma$ is
\beq \label{eq-sig-in-p} \text{sig-index}(\Gamma_p) = \#\sigma_\Gamma^{-1}(\zeta), \text{ where }\zeta=\sigma_\Gamma(p),\eeq
 while 
 the \emph{(global) signature index} of $\Gamma$ is
 \beq\label{eq-sig-ind}\text{sig-index}(\Gamma)= \ds{\min_{p\in\Gamma}}\{\text{sig-index}(\Gamma_p)\},\eeq
 where $\#$ denotes the cardinality of the set.
\end{definition}


It is clear that (\ref{eq-sig-ind}) is consistent with the definition 
$\text{sig-index}(\Gamma)= \min_{\zeta \in S_\Gamma}\#\{\sigma_\Gamma^{-1}(\zeta)\}$ in \cite{Musso2009} (Definition 2).
If $\Gamma$ is a simple curve, and $\gamma\colon\R\to \Gamma$ is {a regular} parameterization with a minimal period $L$ and $\sigma_\gamma=\sigma_\Gamma\circ\gamma$ as in Definition~\ref{def-sig}, then 

\beq\label{eq-sig-in2}\text{sig-index}(\Gamma) = \min_{\zeta \in S_\Gamma}\#\{ t\in[0,L)|\sigma_\gamma(t)=\zeta\}.\eeq

The two curves shown in Figure~\ref{fig:Cinf-sym} have unequal symmetry and equal signature indices. However, the following  lemma shows that for a large class of curves  the symmetry index divides the signature index:

\begin{lemma}
  Let $\Gamma$ be a closed, non-degenerate, simple planar curve. Then for all $p\in \Gamma$, the local signature  index   $\text{sig-index}(\Gamma_p)$  is finite and is divisible by the symmetry index $\text{sym-index}(\Gamma)$. The same is true for the global signature index  $\text{sig-index}(\Gamma)$.
\end{lemma}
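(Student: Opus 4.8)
The plan is to exploit the fact that a non-degenerate closed simple curve has finitely many vertices, and that the signature map restricted to each maximal vertex-free arc is injective. First I would note that since $\Gamma$ is simple and closed, it has a regular (say, unit-speed) periodic parameterization $\gamma$ with minimal period $L$, and by Proposition~\ref{prop-integrals}, $m = \text{sym-index}(\Gamma) = L/\ell$, where $\ell$ is the minimal period of $\kappa$. The key structural observation is that on any arc between consecutive vertices, $\dot\kappa$ has constant sign, so $\kappa$ is strictly monotone there; hence the signature map $\sigma_\gamma$ is injective on each such open arc, and in particular locally injective. Because $\Gamma$ is non-degenerate, the vertices are isolated, and since $[0,L)$ is compact (as a quotient), there are only finitely many of them; this gives finiteness of $\text{sig-index}(\Gamma_p)$ for every $p$ — each fiber $\sigma_\Gamma^{-1}(\zeta)$ meets each vertex-free arc at most once (plus finitely many vertices), so the fiber is finite.

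Next I would set up the group action of $\Z/m\Z$ on the fibers. The generator corresponds to the rotation $g$ of order $m$ that is the primitive symmetry of $\Gamma$; concretely, in the unit-speed parameterization, $g$ acts by the arc-length shift $s \mapsto s + \ell$, and since $\kappa(s+\ell) = \kappa(s)$ and $\dot\kappa(s+\ell)=\dot\kappa(s)$, this shift maps the fiber $\sigma_\gamma^{-1}(\zeta) \subset [0,L)$ to itself bijectively. I would then argue this $\Z/m\Z$-action on the (finite) fiber is \emph{free}: if $s$ and $s + k\ell$ (with $0 < k < m$) both lie in $[0,L)$ and map to the same point, they are distinct because the shift by $k\ell$ has no fixed point modulo $L$ — here I use that $\ell$ is the \emph{minimal} period of $\kappa$, so $\kappa$ is not invariant under any shift by a proper fraction of $\ell$, hence distinct shifts give distinct parameter values; since $\Gamma$ is simple, distinct parameter values in $[0,L)$ give distinct points of $\Gamma$. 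A free action of a group of order $m$ on a finite set forces the cardinality of the set to be a multiple of $m$, which is exactly $m \mid \text{sig-index}(\Gamma_p)$.

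Finally, for the global signature index: $\text{sig-index}(\Gamma) = \min_{p} \text{sig-index}(\Gamma_p)$ is a minimum of positive integers each divisible by $m$, hence is itself divisible by $m$; and it is finite since each local index is. I expect the main obstacle to be the freeness claim — specifically, ruling out that a shift by $k\ell$ with $0<k<m$ fixes a point of $\Gamma$. This requires care: one must use simplicity of $\Gamma$ (so that the correspondence between $[0,L)$ and points of $\Gamma$ is a genuine bijection, unlike the flower of Remark~\ref{rem-flower}) together with the minimality of the period $\ell$ of $\kappa$ to conclude that the arc-length shift $s \mapsto s + k\ell$ induces a \emph{nontrivial} permutation of $\Gamma$ with no fixed points. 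Once freeness is secured, the orbit-counting conclusion is immediate.
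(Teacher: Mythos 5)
Your proposal is correct and follows essentially the same strategy as the paper: establish finiteness of the fiber via local injectivity of the signature map on a compact domain, then let the order-$m$ symmetry group act freely on the fiber and conclude $m \mid \#\sigma_\Gamma^{-1}(\zeta)$. The only cosmetic difference is how freeness is justified --- you realize the generator as the arc-length shift $s\mapsto s+\ell$ and use minimality of $\ell$ together with simplicity of $\Gamma$, whereas the paper observes that the symmetries are rotations about a center $C\notin\Gamma$ and hence fix no point of the curve --- but these are two justifications of the same step.
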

\begin{proof}Since $\Gamma$ is simple, closed  and is not a circle, its $SE(2)$-symmetry group,  $\sym(\Gamma)$, is a finite subgroup of  the group of rotations  about a point $C\in \R^2$, such that $C\notin\Gamma$.
  Let $m =\# \sym(\Gamma)= \text{sym-index}(\Gamma)$. For $p\in\Gamma$, let $\zeta=\sigma_\Gamma(p)$. 
  From the assumptions on $\Gamma$, it follows  that  the  signature map  $\sigma_\Gamma\colon  \Gamma \to S_\Gamma$ is a well defined continuous locally injective map with compact domain, and so, for any $\zeta\in S_\Gamma$, the preimage set  $\Gamma_\zeta=\sigma^{-1}_\Gamma (\zeta)$  is finite. Since the signature map $\sigma_\Gamma$ is $SE(2)$-invariant,   the action of $\sym(\Gamma)$ restricts to  $\Gamma_\zeta$, and, since $C\notin\Gamma$, it is free. Therefore, $m$ divides    $\#\Gamma_\zeta=\text{sig-index}(\Gamma_p)$.  
  Since the global signature index $\text{sig-index}(\Gamma)$ of $\Gamma$ is defined as the minimum of $\text{sig-index}(\Gamma_p)$, as $p$ varies along $\Gamma$, we immediately have that $\text{sig-index}(\Gamma)$ is  also divisible by $m$.
\end{proof}
 Upon developing more machinery, we show in Proposition~\ref{prop-mlt}, that  at $p\in \Gamma$, such that $\sigma_\Gamma(p)$ is a simple point of the signature, $\text{sym-index}(\Gamma_p)=\text{sig-index}(\Gamma_p)$ (see also Proposition 3.8 of \cite{olver15}).
 
   As the cogwheels shown in Figures~\ref{fig:cogEx}, \ref{fig:cogwheel2}, and \ref{fig:cogwheel3} illustrate,  non-congruent non-degenerate, closed simple curves  of the same length may have identical signatures,  global symmetry and signature indices. Moreover, there is a bijection between any two of these curves, such that the corresponding points have the same  same  local symmetry and signature indices. Thus additional information will be needed to distinguish non-congruent curves. 
\section{Congruence criteria for non-degenerate curves}\label{sect-cong}
In this section, we formulate several congruence criteria for non-degenerate curves. Our proofs are inspired by arguments given in \cite{Hickman2012}. We start with the following local congruence criteria.
\begin{proposition}[Local Congruence]\label{prop-loc}
 Let $\Gamma_1$ and $\Gamma_2$ be two non-degenerate curves with parameterizations $\gamma_1\colon\R\to\R^2$ and $\gamma_2\colon\R\to\R^2$ respectively.
Assume there exist open intervals $I_1\subset\R$ and $I_2\subset\R$, such that the restrictions of the parameterized signature maps $\sigma_1=\sigma_{\gamma_1}|_{I_1}$ and $\sigma_2=\sigma_{\gamma_2}|_{I_2}$ are injective and have the same image $\hat S$ homeomorphic to $\R$. Then the pieces of curves $\hat\Gamma_1=\gamma_1(I_1)$ and $\hat\Gamma_2=\gamma_2(I_2)$ are congruent.
\end{proposition}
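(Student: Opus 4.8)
The plan is to reconstruct a common arc-length reparameterization of the two curve pieces from the shared signature, and then apply Proposition~\ref{prop-cong}. First I would reparameterize both $\hat\Gamma_1$ and $\hat\Gamma_2$ by arc-length, so that $\gamma_i$ is unit speed on a suitable interval $J_i$, and $\dot\kappa_i = \kappa_i'$ there. Because $\Gamma_i$ is non-degenerate, the vertices are isolated, so after possibly shrinking $I_1$ I may assume $\dot\kappa_1$ has constant sign on the interior — except that $\hat S$ is homeomorphic to $\R$, and a phase portrait that is an embedded copy of $\R$ may still cross the horizontal axis. The key point (Remark~\ref{rem-sig}) is that a phase portrait carries a canonical orientation, and $\sigma_i$ being injective with image $\hat S\cong\R$ forces the two parameterized signature maps to be, up to an orientation-preserving reparameterization, the \emph{same} path traced through $\hat S$. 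Concretely, $\sigma_2^{-1}\circ\sigma_1\colon I_1\to I_2$ is a homeomorphism, and after passing to arc-length it becomes a map $s\mapsto \phi(s)$ between the arc-length intervals.

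The crucial step is to show $\phi$ is a translation, i.e. $\phi(s) = s + c$. For this I would use that along each curve $\frac{d\kappa_i}{ds} = \dot\kappa_i$ is literally the second coordinate of the signature, while $\kappa_i$ is the first coordinate; so if $p = \gamma_1(s)$ and $q = \gamma_2(\phi(s))$ have $\sigma_1(s) = \sigma_2(\phi(s)) =: (a(s), b(s))$, then $\frac{d\kappa_1}{ds} = b(s)$ and $\frac{d\kappa_2}{ds}\big|_{\phi(s)} = b(s)$, whereas $\frac{d}{ds}\big(\kappa_2(\phi(s))\big) = b(\phi(s))\,\phi'(s)$. But also $\kappa_1(s) = a(s) = \kappa_2(\phi(s))$, so differentiating gives $b(s) = b(\phi(s))\phi'(s)$. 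Combined with $b(s) = b(\phi(s))$ wherever $b(\phi(s))\neq 0$ — which I get from the fact that $a(s)$ and $a(\phi(s))$ must advance along $\hat S$ at rates dictated by the canonical orientation of the phase portrait — one concludes $\phi'(s) = 1$ on the open dense set where $b\neq 0$, hence everywhere by continuity. I expect the main obstacle to be handling the vertices cleanly: on the zero set of $\dot\kappa$ the argument above degenerates, and I need the non-degeneracy hypothesis (isolated vertices) together with continuity of $\phi'$ to patch across these isolated points; a careful statement is that $\phi$ is a diffeomorphism with $\phi' \equiv 1$.

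Once $\phi(s) = s + c$, I have $\kappa_1(s) = \kappa_2(s+c)$ on the relevant interval, and Proposition~\ref{prop-cong} (its direct, non-simple direction) yields an element $g\in SE(2)$ with $g\cdot\hat\Gamma_1 = \hat\Gamma_2$ as oriented sets, which is exactly congruence of the pieces. I would also remark that the orientation bookkeeping is automatic here: the canonical orientation of the phase portrait $\hat S$ (Remark~\ref{rem-sig}) fixes the direction in which both $\sigma_1$ and $\sigma_2$ traverse $\hat S$, so the matched arc-length parameterizations induce matching orientations on $\hat\Gamma_1$ and $\hat\Gamma_2$, and there is no sign ambiguity in $\kappa$ to resolve.
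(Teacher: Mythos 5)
Your overall route is the same as the paper's: pass to arc-length, use injectivity plus $\hat S\cong\R$ to see that $\sigma_1,\sigma_2$ are homeomorphisms onto $\hat S$ so that $\phi=\sigma_2^{-1}\circ\sigma_1$ is a homeomorphism, show $\phi'\equiv 1$ away from the vertex set (which is discrete by non-degeneracy), conclude $\phi(s)=s+c$, and finish with Proposition~\ref{prop-cong}. Two remarks before the main issue. The identity $\kappa_1'(s)=\kappa_2'(\phi(s))$ needs no argument about ``rates dictated by the canonical orientation'': it is immediate from $\sigma_1(s)=\sigma_2(\phi(s))$, because the second coordinate of the signature \emph{is} the arc-length derivative of the curvature; this is exactly the pair of identities the paper records as its equation (\ref{eq-rel}). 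Also, you may not shrink $I_1$, since the conclusion concerns the full pieces $\gamma_1(I_1)$ and $\gamma_2(I_2)$ (you retract this, so no harm done).

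The genuine gap is the step ``differentiating gives $b(s)=b(\phi(s))\phi'(s)$'': you apply the chain rule to $\kappa_2\circ\phi$ before knowing that $\phi$ is differentiable, whereas a priori $\phi$ is only a homeomorphism. Establishing differentiability of $\phi$ at non-vertex points is precisely where the paper's proof does its real work, and note that the difficulty sits at the \emph{non-vertex} points, not at the vertices as you anticipate. The repair is the paper's difference-quotient argument: set $\alpha(h)=\phi(s_0+h)-\phi(s_0)$, which is nonzero for $h\neq 0$ by injectivity and tends to $0$ by continuity, and write
\[
\kappa_1'(s_0)=\lim_{h\to 0}\frac{\kappa_2\bigl(\phi(s_0)+\alpha(h)\bigr)-\kappa_2\bigl(\phi(s_0)\bigr)}{\alpha(h)}\cdot\frac{\alpha(h)}{h}.
\]
The first factor tends to $\kappa_2'(\phi(s_0))$, which equals $\kappa_1'(s_0)$ by the signature matching; when $s_0$ is not a vertex this is nonzero, so $\lim_{h\to 0}\alpha(h)/h$ exists and equals $1$, i.e.\ $\phi'(s_0)=1$. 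Finally, no ``continuity of $\phi'$'' is needed (or available) to cross the vertices: a continuous map with derivative identically $1$ off a discrete set is already of the form $s\mapsto s+c$ on the whole interval, after which Proposition~\ref{prop-cong} concludes as you say.
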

\begin{proof} Without loss of generality we can assume that $\gamma_1$ and $\gamma_2$ are arc-length parameterizations. Then, for $i=1,2$, $\sigma_i(s)=(\kappa_i(s),\kappa_i'(s))$, $s\in I_i$.  Let $\Lambda_i=\{s\in I_i |\gamma_i(s)\text{ is a vertex of } \Gamma_i\}$. 
We show that $\Lambda_i$ is a discrete set of points. Indeed assume the contrary. Then, there exists a convergent sequence of distinct points $\{s_n\}_{n=0}^\infty\subset \Lambda_i$, such that  $s=\lim_{n\to\infty} s_n\in \Lambda_i$. By continuity  and local injectivity of $\gamma_i$, we obtain  that the vertex  $p=\gamma_i(s)$ is not isolated. This contradicts the non-degeneracy assumption.

Since the image of $\hat S$ of $\sigma_i$   is homeomorphic to $\R$, and   a bijective continuous map from an open interval to $\R$ is a homeomorphism, it follows that $\sigma_1$ and $\sigma_2$ are homeomorphisms and so  the composition $\rho=\sigma_1^{-1}\circ\sigma_2\colon I_2\to I_1$ is also  a homeomorphism. Then, restricted to $I_2$, we have 
\beq \label{eq-rel}\kappa_2(s)=\kappa_1(\rho(s))\text{ and } \kappa_2'(s)=\kappa'_1(\rho(s)).\eeq 
 We  first show that $\rho: I_2 \to I_1$ is differentiable for all $s$ and $\rho'(s)=1$ for all $s\in I_2$.
 From (\ref{eq-rel}) it follows that $\Lambda_1=\rho(\Lambda_2)$. 
 
 Since $I_2$ is open,  for any $s_0\in I_2$, a function
\beq\label{eq-alpha}\alpha(h)=\rho(s_0+h)-\rho(s_0)\eeq
is defined on a sufficiently small interval containing $0$.
Since $\rho$ is continuous, $\lim_{h\to 0}\alpha(h)=0$, but since $\rho$ is injective, $\alpha(h)\neq 0$ for $h\neq 0$.
By our assumption of $\kappa_2$ being $C^1$-smooth, $\kappa_2'(s_0)$ is defined and using the first equality in (\ref{eq-rel}) and (\ref{eq-alpha}) we can write:
\begin{align*}
\kappa_2'(s_0)&=\lim_{h\to 0}\frac{\kappa_1(\rho(s_0+h))-\kappa_1(\rho(s_0))} h\\
&=\lim_{h\to 0}\frac{\kappa_1(\rho(s_0)+\alpha(h))-\kappa_1(\rho(s_0))} h\\
&=\lim_{h\to 0}\frac{\kappa_1(\rho(s_0)+\alpha(h))-\kappa_1(\rho(s_0))} {\alpha(h)}\cdot \frac {\alpha(h)} h
\end{align*}
Since $\kappa_1$ is $C^1$-smooth, we have 
$$\lim_{h\to 0}\frac{\kappa_1(\rho(s_0)+\alpha(h))-\kappa_1(\rho(s_0))} {\alpha(h)}=\kappa_1'(\rho(s_0)).$$

Now assume that $s_0\in I_2\backslash \Lambda_2$ and so  $\rho(s_0)\in I_1\backslash \Lambda_1$. Then   $\kappa_1'(\rho(s_0))\neq 0$. It then follows that $\lim_{h\to 0 }\frac {\alpha(h)} h$ exists and from the second equation in  (\ref{eq-rel}) it has to be equal to $1$. On the other hand, 
from \eqref{eq-alpha} $\lim_{h\to 0 }\frac {\alpha(h)} h=\rho'(s_0)$.
Thus, $\rho$ is differentiable on the set $I_2\backslash \Lambda_2$  and $\rho'(s_0)=1$ for all  $s_0\in I_2\backslash \Lambda_2$  . Since $\Lambda_2$ is discrete  this implies that there exists $c\in \R$, such that $\rho(s)=s+c$. Then the first equality in (\ref{eq-rel}) implies that $\kappa_2(s)=\kappa_1(s+c)$ and the conclusion of the theorem follows from Proposition~\ref{prop-cong}.

\end{proof}
We now show that the congruence criterion in \cite{Hickman2012}[Theorem 2] stating that: \emph{``Two non-degenerate $C^3$ curves $\gamma_1$ and $\gamma_2$ can be mapped to each other by a proper\footnote{In  \cite{Hickman2012}, \emph{special} Euclidean transformations are called \emph{proper} Euclidean transformations.} Euclidean transformation, $\gamma_2 = g\cdot\gamma_1, g\in SE(2)$, if and only if their signature curves are identical $S_1 = S_2$.''}  is valid for simple curves with simple signatures. Later in Section~\ref{sect-smooth}  we show that it \emph{is not valid} for curves with non-simple signatures.
\begin{proposition}[Curves with simple signature]\label{prop-simple} Assume $\Gamma_1$ and $\Gamma_2$ are simple closed non-degenerate curves with the same signature $S$. Assume $S$ is a simple closed curve. Then $\Gamma_1$ and $\Gamma_2$ are congruent.
\end{proposition}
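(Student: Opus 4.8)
The plan is to reduce the proposition to Proposition~\ref{prop-cong} by showing that, after choosing arc-length parameterizations, the curvature functions of $\Gamma_1$ and $\Gamma_2$ differ by a shift of the parameter. Fix unit speed parameterizations $\gamma_i\colon\R\to\R^2$ of $\Gamma_i$ with curvature functions $\kappa_i$, so that $\sigma_{\gamma_i}(s)=(\kappa_i(s),\kappa_i'(s))$ is continuous, periodic, surjective onto $S$, and---because $\Gamma_i$ is non-degenerate---locally injective. Since $S$ is a simple closed curve it is homeomorphic to a circle, and as a phase portrait it carries the intrinsic orientation of Remark~\ref{rem-sig}, which is the direction in which both $\sigma_{\gamma_1}$ and $\sigma_{\gamma_2}$ traverse it.

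The heart of the argument is a lifting step. Since $\sigma_{\gamma_i}$ is $\ell_i$-periodic (with $\ell_i$ the minimal period of $\kappa_i$) and $S$ is a circle, I would fix a homeomorphism $\eta\colon\R/\Z\to S$ compatible with the phase-portrait orientation and lift each $\sigma_{\gamma_i}$ through $\eta$ to a continuous map $\tilde u_i\colon\R\to\R$ with $\eta(\tilde u_i(s)\bmod 1)=\sigma_{\gamma_i}(s)$. Local injectivity of $\sigma_{\gamma_i}$ makes $\tilde u_i$ locally injective, hence strictly monotone; periodicity gives $\tilde u_i(s+\ell_i)=\tilde u_i(s)+D_i$ for a positive integer $D_i$, which together with monotonicity forces $\tilde u_i$ to be an increasing homeomorphism of $\R$. (Equivalently: $\sigma_{\gamma_i}\colon\R\to S$ is the universal covering of $S$.) Then $\rho:=\tilde u_1^{-1}\circ\tilde u_2\colon\R\to\R$ is an increasing homeomorphism, and chasing the definitions gives $\sigma_{\gamma_1}\circ\rho=\sigma_{\gamma_2}$, that is,
\beq \kappa_2(s)=\kappa_1(\rho(s)),\qquad \kappa_2'(s)=\kappa_1'(\rho(s))\qquad\text{for all }s\in\R.\eeq
Both lifts are \emph{increasing} precisely because both signature maps traverse $S$ in its canonical phase-portrait direction (Remark~\ref{rem-sig}); this is what will make the resulting congruence lie in $SE(2)$ rather than in the full Euclidean group.

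From here I would reuse the differentiation argument in the proof of Proposition~\ref{prop-loc} essentially verbatim. The set $\Lambda_2=\{s\mid\kappa_2'(s)=0\}$ is discrete, since a limit point of $\Lambda_2$ would be a non-isolated vertex of $\Gamma_2$, contradicting non-degeneracy. For $s_0\notin\Lambda_2$ one has $\kappa_1'(\rho(s_0))=\kappa_2'(s_0)\ne 0$, and with $\alpha(h)=\rho(s_0+h)-\rho(s_0)$---which tends to $0$ but is nonzero for $h\ne 0$, by continuity and injectivity of $\rho$---the identity $\kappa_2'(s_0)=\lim_{h\to 0}\frac{\kappa_1(\rho(s_0)+\alpha(h))-\kappa_1(\rho(s_0))}{\alpha(h)}\cdot\frac{\alpha(h)}{h}$ forces $\rho'(s_0)=1$. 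Hence $\rho$ is affine with unit slope on each component of the open dense set $\R\setminus\Lambda_2$, and continuity at the discretely many points of $\Lambda_2$ makes the additive constants coincide, so $\rho(s)=s+c$ for a single $c\in\R$. Then $\kappa_2(s)=\kappa_1(s+c)$, and Proposition~\ref{prop-cong} yields $\Gamma_1\cong\Gamma_2$.

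The hard part is the lifting step: one must verify that the signature maps lift to genuine \emph{homeomorphisms} of $\R$---that the lift is globally monotone (not just locally injective), that it is surjective (this uses periodicity of $\kappa_i$), and, crucially, that the two lifts have the \emph{same} monotonicity, so that $\rho$ is increasing and the congruence is special Euclidean. It is worth stressing that the hypothesis doing the work is the simplicity of $S$, not of $\Gamma_1$ and $\Gamma_2$: simplicity of $S$ is exactly what allows one to pass to the universal cover $\R$ and compare the two curvature functions directly, and it is precisely this step that fails for the non-simple signatures constructed in Section~\ref{sect-smooth}.
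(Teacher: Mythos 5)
Your proof is correct, and it is organized along a genuinely different route from the paper's. The paper cuts the simple closed signature $S$ at a single point $\zeta=\sigma_{\Gamma_1}(p_1)$, aligns the parameterizations so that both start at preimages of $\zeta$, shows that each restriction $\sigma_{\gamma_i}|_{(0,\ell_i)}$ is a homeomorphism onto $S\setminus\{\zeta\}$, applies Proposition~\ref{prop-loc} to the resulting open arcs $\gamma_i\big((0,\ell_i)\big)$, and then uses minimality of the periods to force the shift $c$ to vanish and $\ell_1=\ell_2$ before extending by periodicity. You instead pass to the universal cover of $S\cong S^1$, lift both parameterized signature maps to increasing homeomorphisms $\tilde u_i$ of $\R$, and produce one global homeomorphism $\rho=\tilde u_1^{-1}\circ\tilde u_2$ with $\sigma_{\gamma_2}=\sigma_{\gamma_1}\circ\rho$; at that point you are exactly in the hypothesis of Theorem~\ref{thm-cong-non-deg}, whose proof is the differentiation argument you then reproduce, so you could simply cite it. The two proofs share their analytic core (the $\rho'\equiv1$ argument of Proposition~\ref{prop-loc}) and both finish with Proposition~\ref{prop-cong}; the difference is the topological bookkeeping. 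Your covering-space formulation buys some robustness: the degree $D_i$ of the lift automatically accounts for the possibility that a signature map winds around $S$ more than once per minimal period of $\kappa_i$, a case the paper's argument implicitly excludes when it asserts $\sigma_{\gamma_i}\big((0,\ell_i)\big)=S\setminus\{\zeta\}$, and it dispenses with the minimal-period step entirely; the paper's version is more elementary and reuses Proposition~\ref{prop-loc} as a black box. One minor remark: the increasing direction of $\rho$ is not really an input you need to secure separately, since $\kappa_2=\kappa_1\circ\rho$ and $\kappa_2'=\kappa_1'\circ\rho$ already force $\rho'=1>0$ at non-vertices, though your appeal to Remark~\ref{rem-sig} is indeed the correct reason the two lifts are coherently oriented.
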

\begin{proof} Let $\gamma_1$ and $\gamma_2$ be unit speed parameterizations of $\Gamma_1$ and $\Gamma_2$, respectively and let $p_1=\gamma_1(0)\in \Gamma_1$ and $\zeta=\sigma_{\Gamma_1}(p_1)\in S$. Then there exists $p_2\in \Gamma_2$, such that
$\sigma_{\Gamma_2}(p_2)=\zeta$. By applying a shift in the parameterization of $\Gamma_2$ we may assume that $\gamma_2(0)=p_2$. 
Let $\kappa_1(s)$ and $\kappa_2(s)$ be corresponding parameterized curvature functions and $\ell_1$, $\ell_2$ their respective minimal periods. 

As we discussed in Remark~\ref{rem-sig}, for $i=1,2$, the maps $\sigma_{\gamma_i}\colon\R\to S$ are locally injective clockwise parameterizations of the simple curve $S$  with minimal periods $\ell_i$.  They have the same images, the set $S\backslash \{\zeta\}$ homeomorphic to $\R$. Since  by the intermediate value theorem a locally injective and surjective map from an open interval to $\R$ is a homeomorphism, it follows that $\sigma_{\gamma_1}|_{(0,\ell_1)}$ and $\sigma_{\gamma_2}|_{(0,\ell_2)}$ are homeomorphisms onto their  image $S\backslash \{\zeta\}$.   Then, from  Proposition~\ref{prop-loc}, the open curve segments $\hat\Gamma_1=\gamma_1\big((0,\ell_1)\big)\subset \Gamma_1$ and $\hat\Gamma_2=\gamma_2\big((0,\ell_2)\big)\subset \Gamma_2$ are congruent. From Proposition~\ref{prop-cong}, it follows that there exists $c$, such that
$\kappa_2|_{(0,\ell_2)}(s)=\kappa_1|_{(0,\ell_1)}(s+c)$. But since both intervals are minimal periods of the corresponding function and both start with zero, we have $c=0$ and $\ell_1=\ell_2$. Then by continuity and periodicity of $\kappa_1$ and $\kappa_2$, $\kappa_2(s)=\kappa_1(s)$ for all $s\in \R$. Invoking Proposition~\ref{prop-cong} again we conclude that $\Gamma_1\cong\Gamma_2$.

\end{proof}
Since most of the curves appearing in \cite{Musso2009} and \cite{Hickman2012} and in the current paper have non-simple signatures, we show in Figure~\ref{fig:SimpleSigCurve} an example of a curve with a very simple signature.
\begin{figure}
  \centering
  \subfigure[A simple, closed, non-degenerate curve $\Gamma$ with curvature $\kappa(s) = \sin(s)+\cos(s)+\frac15$.]{
  \includegraphics[width=6cm]{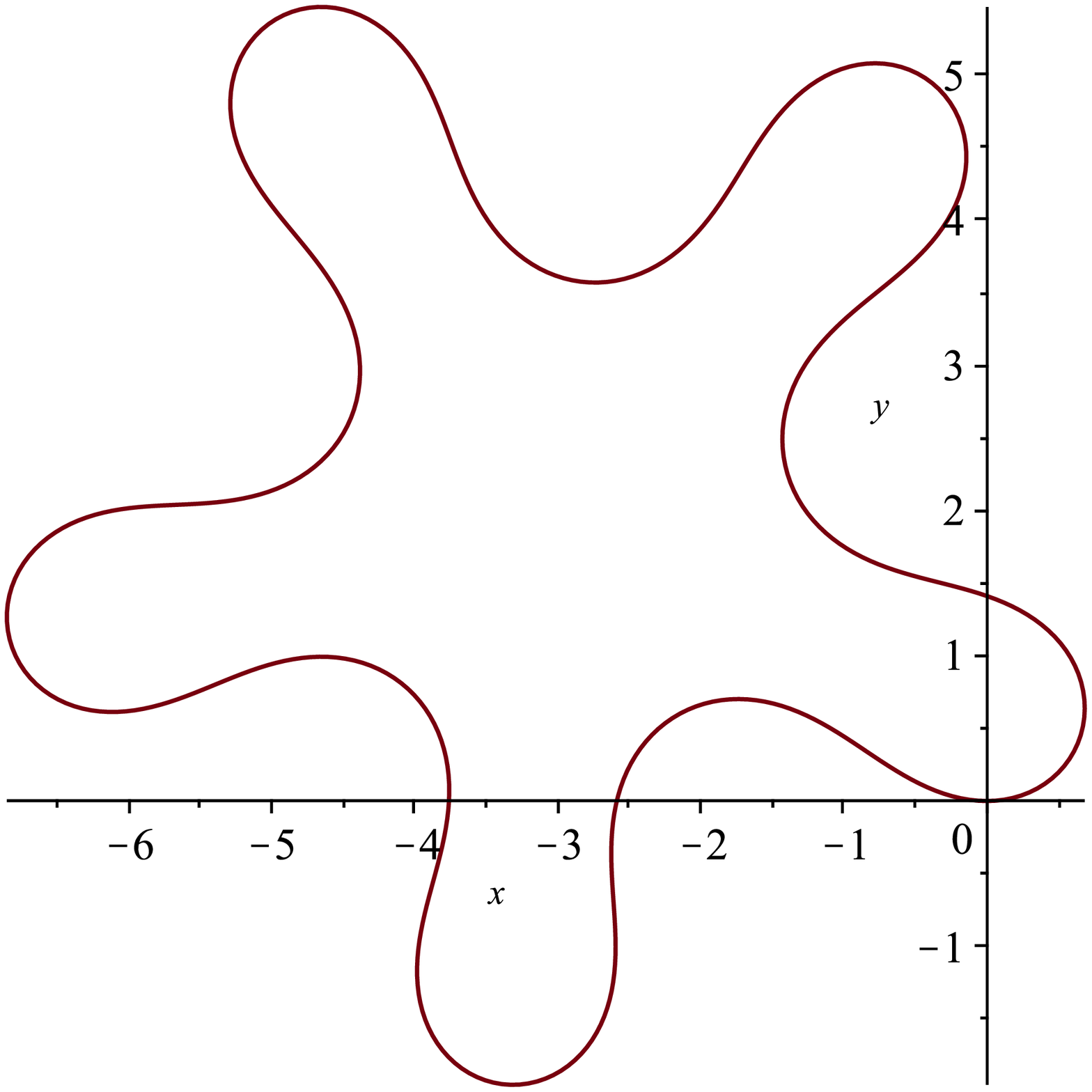}
}
  \subfigure[The signature $S_\Gamma$ is a circle.]{
  \includegraphics[width=6cm]{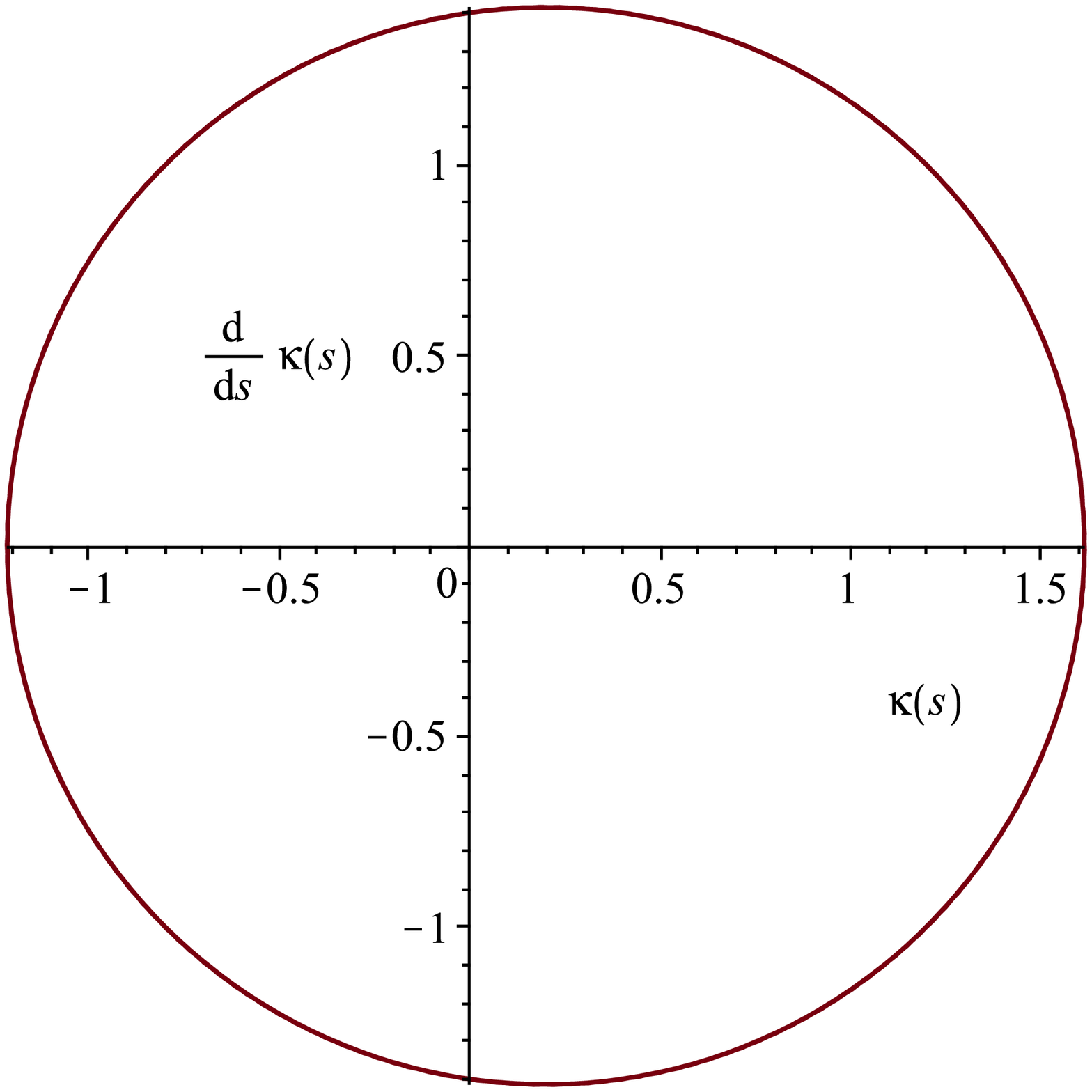}
}
  \caption{A simple, closed, non-degenerate curve with simple signature.}
  \label{fig:SimpleSigCurve}
\end{figure}
We conclude this section with  general congruence criterion. As our subsequent examples indicate, to establish congruence of curves that have equivalent non-simple signatures, one needs to pay close attention at how the parameterized signature map traces the signature. 
\begin{theorem}[Non-degenerate curve congruence]
  \label{thm-cong-non-deg} Let $\gamma_1\colon\R\to\R^2$ and $\gamma_2\colon\R\to\R^2$ be parameterizations of non-degenerate curves $\Gamma_1$ and $\Gamma_2$, respectively, and $\sigma_{\gamma_1}$, and $\sigma_{\gamma_2}$ the corresponding parameterized signature maps. 
 If there exists a homeomorphism $\rho\colon \R\to\R$, such that $\sigma_{\gamma_2}=\sigma_{\gamma_1}\circ\rho$ then $\Gamma_1$ and $\Gamma_2$ are congruent. If $\Gamma_1$ and $\Gamma_2$ are simple then the converse is true as well.
\end{theorem}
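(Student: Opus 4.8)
The plan is to reduce the statement to the previously established results and then handle the closed-curve subtlety carefully. For the first (forward) implication, I would first pass to arc-length parameterizations of $\Gamma_1$ and $\Gamma_2$; since reparameterization is a homeomorphism of $\R$, the hypothesis $\sigma_{\gamma_2}=\sigma_{\gamma_1}\circ\rho$ persists (possibly with a new homeomorphism $\rho$). Writing $\sigma_{\gamma_i}(s)=(\kappa_i(s),\kappa_i'(s))$ we then have $\kappa_2(s)=\kappa_1(\rho(s))$ and $\kappa_2'(s)=\kappa_1'(\rho(s))$ for all $s$. Now the argument in the proof of Proposition~\ref{prop-loc} applies essentially verbatim: the vertex sets $\Lambda_i$ are discrete by non-degeneracy and local injectivity, off $\Lambda_2$ one shows $\rho$ is differentiable with $\rho'\equiv 1$ using the two relations and the chain-rule computation, and then discreteness of $\Lambda_2$ together with continuity of $\rho$ forces $\rho(s)=s+c$ for a single constant $c$ on all of $\R$. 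Hence $\kappa_2(s)=\kappa_1(s+c)$ for all $s$, and Proposition~\ref{prop-cong} gives $\Gamma_1\cong\Gamma_2$.

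For the converse, assume $\Gamma_1$ and $\Gamma_2$ are simple and congruent, say $\Gamma_2=g\cdot\Gamma_1$. Since $\kappa$ and $\dot\kappa$ are $SE(2)$-invariant, for any regular parameterization $\gamma_1$ of $\Gamma_1$ the map $g\circ\gamma_1$ is a regular parameterization of $\Gamma_2$ with the identical parameterized signature map. So it suffices to show that any two regular parameterizations $\gamma_2,\tilde\gamma_2$ of the \emph{same} simple curve $\Gamma_2$ have signature maps related by precomposition with a homeomorphism of $\R$. Because $\Gamma_2$ is simple it is an embedded $1$-manifold (a loop or an open/closed segment), so the two parameterizations differ by an orientation-preserving change of parameter — or, if the orientations disagree, by an orientation-reversing one. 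In the orientation-preserving case we are done immediately: the transition map $\rho=\gamma_1^{-1}\circ\gamma_2$ (defined via the manifold structure) is a homeomorphism of $\R$ and $\sigma_{\gamma_2}=\sigma_{\gamma_1}\circ\rho$. The orientation-reversing case requires a remark: reversing orientation sends $\kappa\mapsto-\kappa$ and fixes $\dot\kappa$; but our notion of congruence (Definition~\ref{def-loc-sym}'s preamble) includes equality of orientation, so a congruence $\Gamma_2=g\cdot\Gamma_1$ matches the orientations, and we may always choose the parameterizations compatibly so that only the orientation-preserving transition occurs. Thus the desired homeomorphism exists.

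The step I expect to be the main obstacle is the orientation bookkeeping in the converse. One must be careful that "simple" is genuinely used: for a non-simple curve (the flower of Remark~\ref{rem-flower}, or Figure~\ref{fig:Cinf_notSimple_sym3_1}) two regular parameterizations inducing the same orientation need not be related by a homeomorphism of $\R$ at all — they can traverse pieces in different orders — so the converse genuinely fails there, and the proof must isolate exactly where simplicity enters (namely, in identifying $\Gamma$ with an embedded $1$-manifold so that any two parameterizations consistent with a fixed orientation are reparameterizations of one another). A secondary technical point is confirming that, in the forward direction, the single constant $c$ is globally consistent: $\rho'\equiv 1$ holds only off the discrete set $\Lambda_2$, so one invokes continuity of $\rho$ to conclude $\rho(s)-s$ is locally constant, hence globally constant, on all of $\R$; this is routine but should be stated explicitly since $\R\setminus\Lambda_2$ need not be connected.
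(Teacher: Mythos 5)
Your proposal is correct, and the forward direction is essentially identical to the paper's proof: reduce to arc-length parameterizations, observe that the relations $\kappa_2=\kappa_1\circ\rho$ and $\kappa_2'=\kappa_1'\circ\rho$ now hold on all of $\R$, rerun the argument of Proposition~\ref{prop-loc} to get $\rho'\equiv 1$ off the discrete vertex set and hence $\rho(s)=s+c$, and finish with Proposition~\ref{prop-cong}. Your explicit remark that continuity of $\rho$ is needed to propagate the constant $c$ across the discrete set $\Lambda_2$ is exactly the point the paper delegates to the proof of Proposition~\ref{prop-loc}. The only place you diverge is the converse: the paper applies the converse half of Proposition~\ref{prop-cong} directly to conclude that $\gamma_2(s)=g\circ\gamma_1(s+c)$ for some shift $c$, so that $\rho(s)=s+c$ is produced explicitly, whereas you argue topologically that any two regular parameterizations of an embedded $1$-manifold consistent with the same orientation differ by a homeomorphism of $\R$. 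Both are valid and both use simplicity in the same essential way; the paper's route is shorter and yields an affine $\rho$, while yours makes the role of simplicity (versus the flower of Remark~\ref{rem-flower}) more transparent, at the cost of some care with the covering/transition-map argument for loops, where $\gamma^{-1}$ is only defined after descending to the circle and choosing minimal periods.
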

\begin{proof} 
  We note that the first statement of the theorem is true for arc-length parameterizations of $\Gamma_1$ and $\Gamma_2$ if and only if it is true for arbitrary regular parameterizations of $\Gamma_1$ and $\Gamma_2$. Thus we may assume that $\gamma_1$ and $\gamma_2$ are arc-length parameterizations.

  First, assume that there exists a homeomorphism $\rho\colon \R\to\R$ such that $\sigma_{\gamma_2} = \sigma_{\gamma_1}\circ \rho$. Then (\ref{eq-rel}) is valid for all $s\in \R$. Following the same argument as in the proof of Proposition~\ref{prop-loc} we can show that there exists $c\in \R$, such that $\kappa_2(s)=\kappa_1(s+c)$. Now the congruence of $\Gamma_1$ and $\Gamma_2$ follows from Proposition~\ref{prop-cong}.

 Next, assume $\Gamma_1 \cong \Gamma_2$ and are  simple. Then there exists $g\in SE(2)$, such that $\gamma_2$ and $g \circ \gamma_1$ are two arc-length parameterizations of $\Gamma_2$. 
From Proposition~\ref{prop-cong}, it follows that there exists $c\in\R$ such that $\gamma_2(s) = g \circ \gamma_1(s+c)$.
 Since curvature is invariant under  the $SE(2)$-action,  it follows that $\kappa_2(s) = \kappa_1(s+c)$.
So for the homeomorphism $\rho(s) = s+c$, we have $\sigma_{\gamma_2} = \sigma_{\gamma_1}\circ \rho$, as desired.
\end{proof}

\section{Non-congruent, non-degenerate, simple, \texorpdfstring{$C^\infty$}{C infinity smooth}-curves of the same length with identical signature}
\label{sect-smooth} 
In this section, we start by constructing two non-congruent, non-degenerate,
closed, simple, $C^\infty$ curves $\Gamma_1$ and $\Gamma_2$ of equal lengths, identical signatures, the same signature index (equal to 6) and the same symmetry group ($\Z_6$).
These curves provide a counterexample to Theorem 2 in \cite{Hickman2012}
and help to identify a gap in the argument presented in \cite{Hickman2012} (See Remark~\ref{rem-gap}). 
We conclude this section by constructing two more smooth closed simple curves $\Gamma_3$ and $\Gamma_4$ with the same signature index and length as $\Gamma_1$ and $\Gamma_2$, but different symmetry groups: $\Gamma_3$ has symmetry group $\Z_3$ and $\Gamma_4$ has the symmetry group $\Z_2$.

We start by building $C^\infty$-smooth curvature functions $\ka 1(s)$ and $\ka 2(s)$, both with a minimal period of 8, such that for every 
integer $n$, $\ka 2(s)$ restricted to the interval $[2n, 2(n+1)]$ is a horizontal shift of $\ka 1(s)$ restricted to the interval $[2m, 2(m+1)]$ for some integer $m$ and vice versa. However, globally $\ka1(s)$ and $\ka 2(s)$ are not related by a horizontal shift. Functions $\ka 1(s)$ and $\ka 2(s)$, restricted to a minimal period are given in Figure \ref{fig:Cinf-kappa} and their explicit formulas are obtained as follows.

 Let $r_1<r_2$ be two real numbers
and consider the smooth cutoff function:
\[
  h_{r_1,r_2}(s) = \begin{cases}
  1 & s \leq r_1 \\
  \frac{e^{\frac{1}{s-r_1}}}{e^{\frac{1}{s-r_1}} + e^{\frac{1}{r_2-s}}} & r_1 < s < r_2 \\
  0 & s \geq r_2
\end{cases}
\]
and its reflection across the line $x = \frac{r_1+r_2}{2}$
\[
  g_{r_1,r_2}(s) = \begin{cases}
  0 & s \leq r_1 \\
  \frac{e^{\frac{1}{r_2-s}}}{e^{\frac{1}{s-r_1}} + e^{\frac{1}{r_2-s}}} & r_1 < s < r_2 \\
  1 & s \geq r_2.
\end{cases}
\]

For both $h_{r_1,r_2}$ and $g_{r_1,r_2}$,
 the limit of their derivatives of any order as they approach $r_1$ from the left and $r_2$ from the right is 0 (see \cite{Lee1}, Page 42). 
 
Note that  due to the symmetry $ \int_{r_1}^{r_2}h_{r_1,r_2}(s)ds=\int_{r_1}^{r_2}g_{r_1,r_2}(s)ds$,
we have
\begin{align*}\int_{r_1}^{r_2}h_{r_1,r_2}(s)ds
&=\int_{r_1}^{r_2}\frac{e^{\frac{1}{s-r_1}}}{e^{\frac{1}{s-r_1}} + e^{\frac{1}{r_2-s}}}ds\\
&=\int_{r_1}^{r_2}\frac{e^{\frac{1}{r_2-s}}}{e^{\frac{1}{s-r_1}} + e^{\frac{1}{r_2-s}}}ds\\
&=\int_{r_1}^{r_2}1 - \frac{e^{\frac{1}{s-r_1}}}{e^{\frac{1}{s-r_1}} + e^{\frac{1}{r_2-s}}}ds\\
&= {r_2-r_1}-\int_{r_1}^{r_2}h_{r_1,r_2}ds.
\end{align*}
And so
\beq\label{int-gh}\int_{r_1}^{r_2}h_{r_1,r_2}ds=\int_{r_1}^{r_2}g_{r_1,r_2}ds=\frac{r_2-r_1}{2}.\eeq

Define a function $f_{r_1,r_2}: \mathbb{R} \to \mathbb{R}$, using $h_{r_1,\frac{r_1+r_2}{2}}$ and $g_{\frac{r_1+r_2}{2},r_2}$ together as follows: 
\[
  f_{r_1,r_2}(s) = \begin{cases}
    0 & s \leq r_1\\
    g_{r_1,\frac{r_1+r_2}{2}}(s) & r_1 < s \leq \frac{r_1+r_2}{2} \\
    h_{\frac{r_1+r_2}{2},r_2}(s) & \frac{r_1+r_2}{2} < s < r_2 \\
    0 & s \geq r_2
  \end{cases}
\]
Then $f_{r_1,r_2}$ is a $C^\infty$ function such that $f_{r_1,r_2}(s) \equiv 0$ for $s \notin (r_1,r_2)$, 
$0 < f(s) < 1$ for $s \in (r_1,\frac{r_1+r_2}{2}) \cup (\frac{r_1+r_2}{2},r_2)$, and $f(\frac{r_1+r_2}{2}) \equiv 1$.
Due to (\ref{int-gh}), 
\beq\label{int-f}\int_{r_1}^{r_2}f_{r_1,r_2}(s)ds=\frac{r_2-r_1}{2}.\eeq

On the interval $[0,8]$ functions $\ka 1, \ka 2$, shown in Figure \ref{fig:Cinf-kappa}, are defined  by:
\begin{align}
\label{ka1} \kappa_1(s)& = \frac{\pi}{3}\big( f_{0,2} - 2f_{2,4} + 3f_{4,6} - f_{6,8} \big)(s)\\
\label{ka2}\kappa_2(s) &= \frac{\pi}{3}\big( f_{0,2} + 3f_{2,4} - 2f_{4,6} - f_{6,8} \big)(s)
\end{align}
and then periodically extended to two $C^\infty$ functions on $\R$ with a period 8.
\begin{figure}
  \centering
    \subfigure[Curvature $\ka 1(s)$.]{
    \includegraphics[width=6cm]{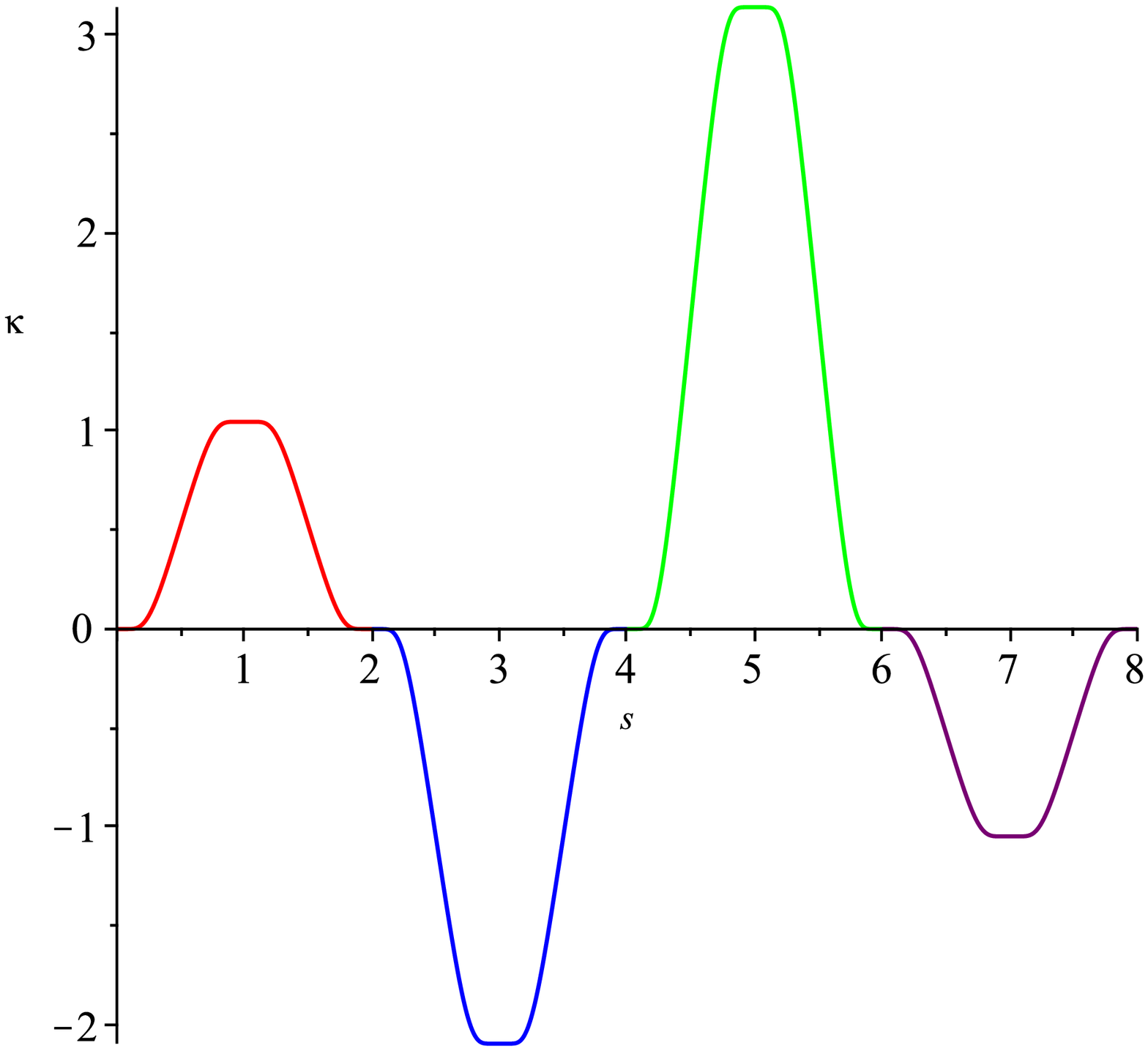}
  }
    \subfigure[Curvature $\ka 2(s)$.]{
    \includegraphics[width=6cm]{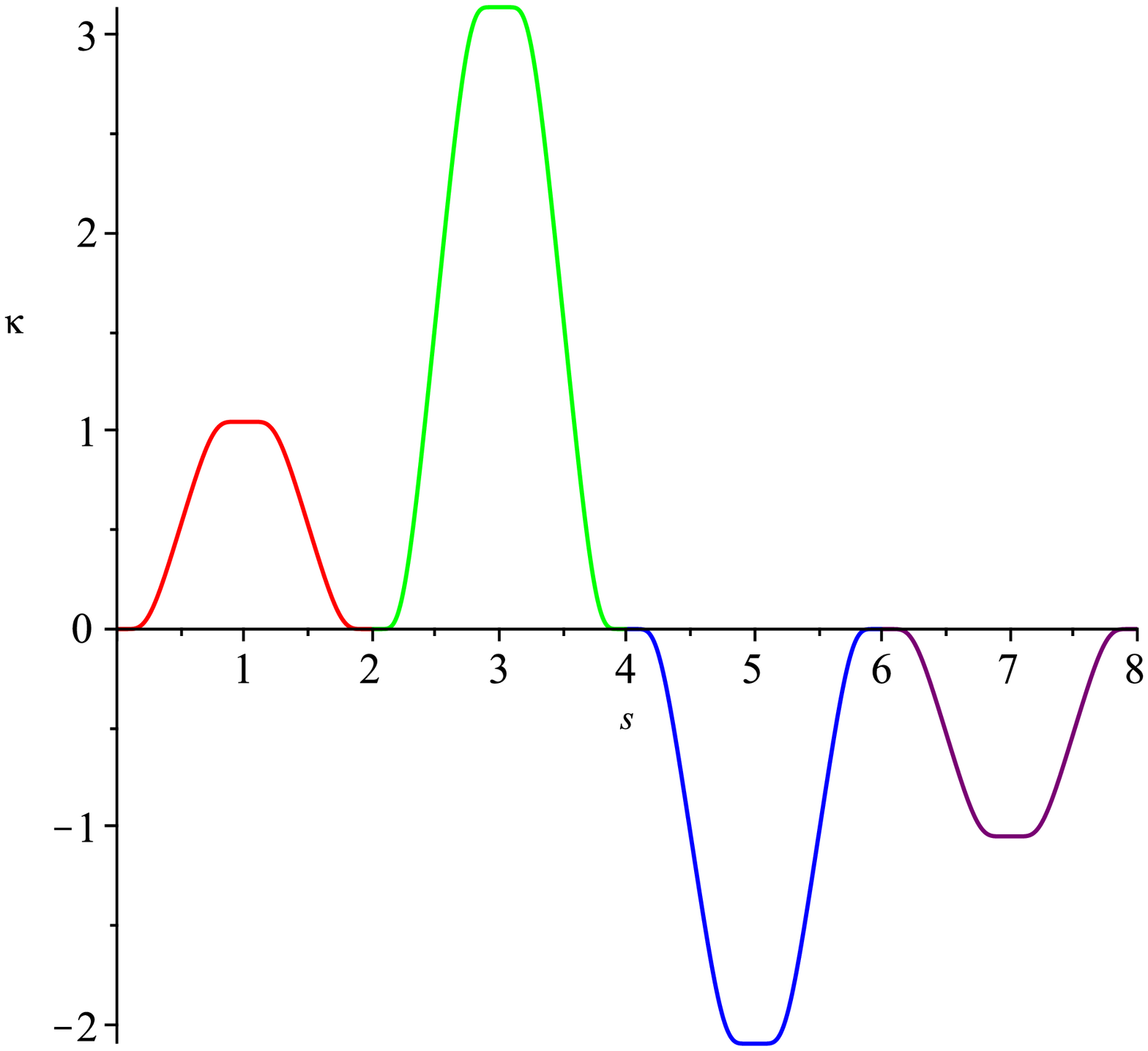}
  }
  \caption{Curvature functions (\ref{ka1}) and (\ref{ka2}) made from scaled cutoff functions.}
\label{fig:Cinf-kappa}
 \end{figure}
 The corresponding parameterizations $\ga 1$ and $\ga 2$ defined by (\ref{eq-gamma}) are $C^\infty$-smooth, because they are obtained by the integration and composition of $C^\infty$-smooth functions. The plots of the corresponding curves $\Gamma_1$ and $\Gamma_2$ in Figure~\ref{fig:Cinf} are obtained
 via a numerical approximation of the integrals appearing in (\ref{eq-gamma}).
These curves are \emph{non-degenerate} since their curvature functions $\kappa_1$ and $\kappa_2$ only contain isolated critical points which happen exactly at the integer values of arc-length.

Due to (\ref{int-f}), $\int_0^8\kappa_1(s)ds=\int_0^8\kappa_2(s)ds=\frac\pi 3$, where $\ka 1$ and $\ka 2$ are defined by (\ref{ka1}) and (\ref{ka2}), respectively. Then, by Lemma~\ref{lem-closed}
these are the curvatures of two closed curves of length $48$ and symmetry index 6.

\begin{figure}
  \centering
    \subfigure[$\Gamma_1$ is the image of $\ga 1(s)$.]{
    \label{fig:Cinfa}\includegraphics[width=6cm]{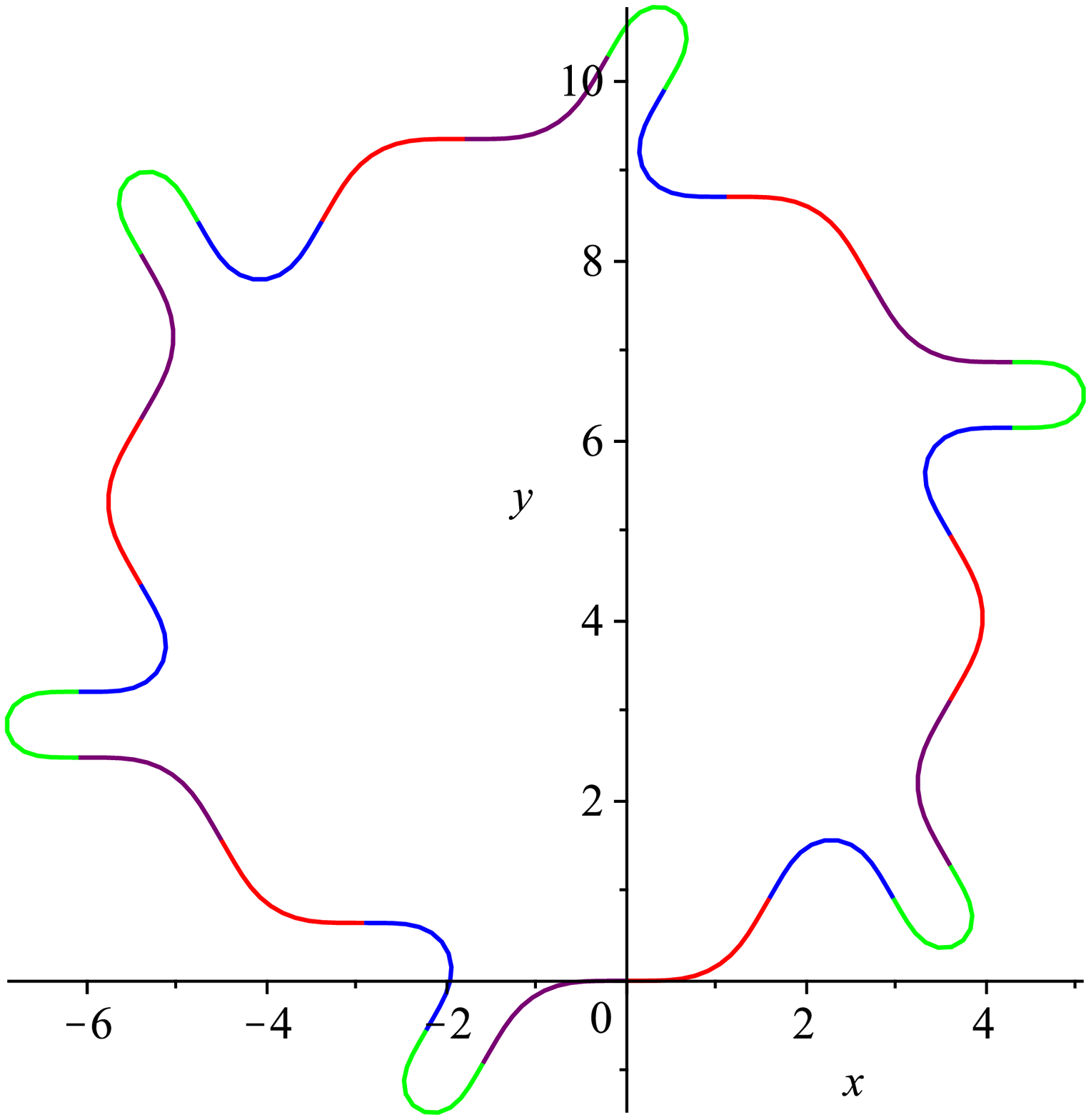}
  }
    \subfigure[$\Gamma_2$ is the image of $\ga 2(s)$.]{
    \label{fig:Cinfb}\includegraphics[width=6cm]{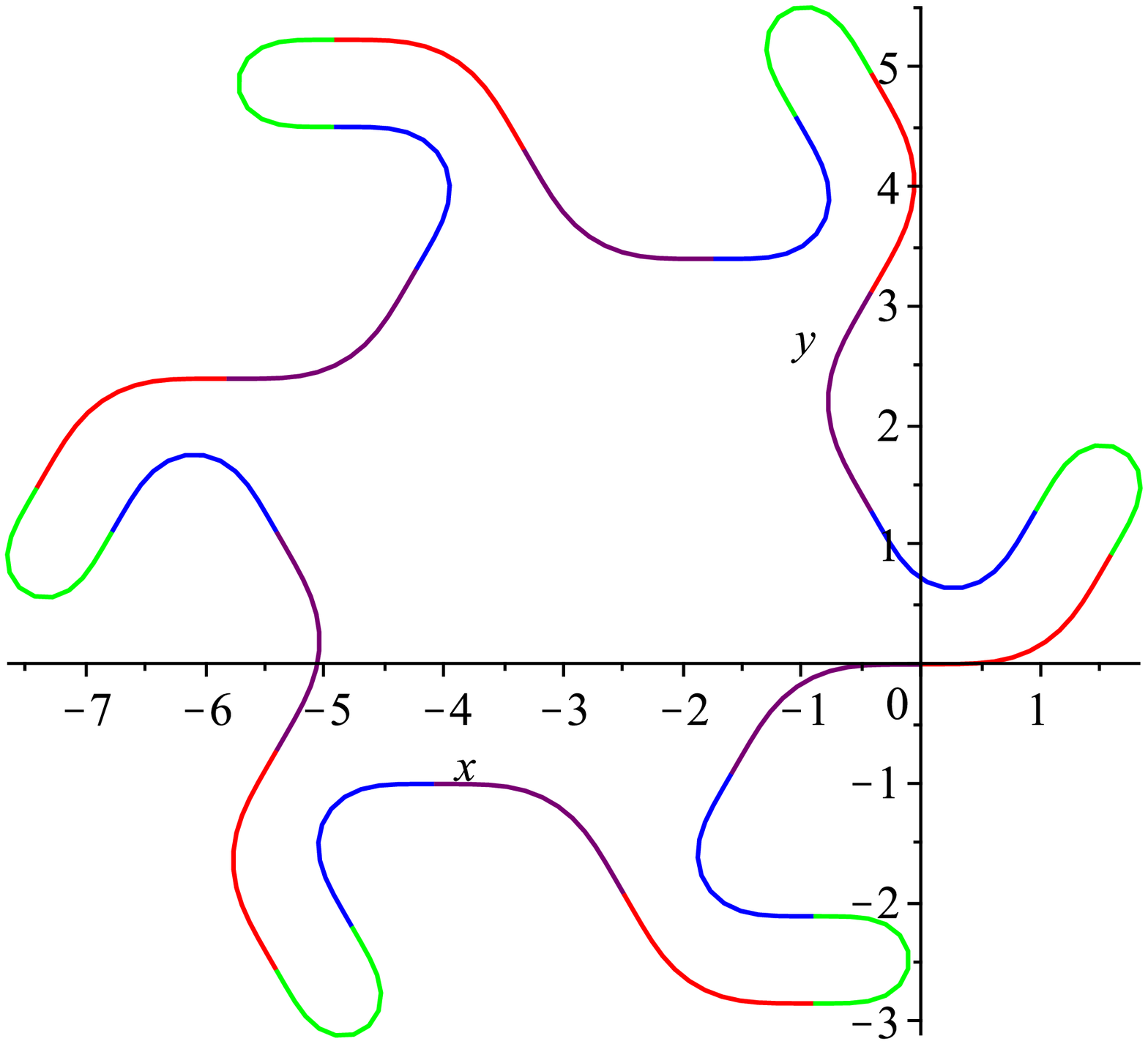}
  }
  \caption{The $C^{\infty}$ curves with curvatures $\ka 1$ and $\ka 2$ given by (\ref{ka1}) and (\ref{ka2}), respectively.}
  \label{fig:Cinf}
\end{figure}

The signatures of $\Gamma_1$ and $\Gamma_2$ are identical, because for every $s_1 \in \mathbb{R}$, there exists $s_2 \in \mathbb{R}$ such that
$${\kappa}_1(s_1) = {\kappa}_2(s_2)\text{ and }
\frac{d \ka1}{ds}(s_1) = \frac {d \ka 2}{ds}(s_2).$$ 
and vice versa. This common signature is presented in Figure~\ref{fig:CinfSig}.
\begin{figure}
  \centering
  \includegraphics[width=6cm]{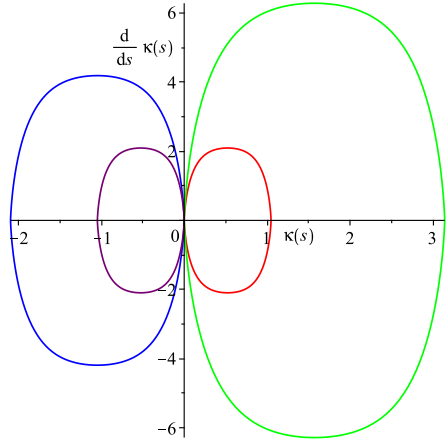}
  \caption{The common signature of non-congruent $C^\infty$ curves $\Gamma_1$ and $\Gamma_2$.}
  \label{fig:CinfSig}
\end{figure}

\begin{remark}\label{rem-gap} \rm
The above example can be used to illustrate the gap in the proof of Theorem 2 in \cite{Hickman2012}, where it is claimed that two non-degenerate $C^3$ curves with the same signature must be congruent. The proof relies on Theorem 3 in \cite{Hickman2012} which, re-written in our notation, claims the following: 
\emph{Let $\ga 1\colon I_1\to \R^2$ and $\ga 2 \colon I_2\to \R^2$ be $C^3$-smooth unit speed parameterizations of curves $\Gamma_1$ and $\Gamma_2$ with respective curvature functions $\kappa_1$ and $\kappa_2$. Let $\Lambda_1$ denote the set of $s\in I_1$, such that $\gamma_1(s)$ is a vertex of $\Gamma_1$. Then $\Gamma_1$ and $\Gamma_2$ have identical signatures if and only if 
there exists a \emph{continuous} surjective map $\rho: I_2 \to I_1$, differentiable for all $s\notin \rho^{-1}(\Lambda_1)$, such  that: 
\beq \label{eq-rho}\rho'(s)=1,\text {for all } s\notin \rho^{-1}(\Lambda_1)
\eeq
\beq
\label{eq-ka-rho} \ka 2 = \ka 1 \circ \rho.\
\eeq
 }
 We show this statement does not hold for curves $\ga 1\colon[0,8]\to \mathbb{R}^2$ and $\ga 2\colon[0,8]\to\R^2$ constructed in this section (see Figure~\ref{fig:Cinf}), with curvatures $\ka 1$ and $\ka 2$ given by (\ref{ka1}) and (\ref{ka2}) (see Figure~\ref{fig:Cinf-kappa}). 
 Indeed, assume $\rho\colon [0,8]\to[0,8]$ is a continuous map satisfying (\ref{eq-rho}) and (\ref{eq-ka-rho}). Differentiation of (\ref{eq-ka-rho}), combined with condition (\ref{eq-rho}) implies that 
\beq
\label{eq-ka-rho'} \ka 2' = \ka 1' \circ \rho 
\eeq
for all $s\notin \rho^{-1}(\Lambda_1)$. Since the set $\Lambda_1$ is discrete, by continuity (\ref{eq-ka-rho'}) must hold for all $s\in [0,8]$.
We note that $\ka2 (1) = 1$, $\ka 2'(1) = 0$ and the only value of $s\in [0,8]$, where $\ka1 (s) = 1$ and $\ka 1'(s) = 0$ is $s=1$. Therefore from (\ref{eq-ka-rho}) and (\ref{eq-ka-rho'}), it must be the case that $\rho(1) = 1$.
Similarly since $\ka 2 (3) = 3$ and the only value of $s\in [0,8]$, where $\ka1 (s) = 3$ is $s=5$, it must be the case that $\rho(3) = 5$.
If $\rho$ were continuous, then by the intermediate value theorem there exists $s_0 \in (1, 3)$, such that 
 $\rho(s_0) = 3$. Then
 $\ka 2(s_0) = \ka 1(\rho(s_0))= \ka 1(3)= -2$ due to (\ref{eq-ka-rho}). However $\ka 2(s) \geq 0$ 
for all $s_0 \in (1, 3)$. Contradiction implies that $\rho$ cannot be continuous. 
\end{remark}

Next we  use our curvature functions $\ka 1(s)$ and $\ka 2(s)$ to define two new curvature functions $\ka 3(s)$ and $\ka 4(s)$ restricted to the interval $[0,16]$ and $[0,24]$ respectively:

\beq
\label{ka3}
\ka 3(s) = \begin{cases}
  \ka 1(s) & 0 \leq s \leq 8 \\
  \ka 2(s) & 8 < s \leq 16 \end{cases}
\eeq

\beq
\label{ka4}
\ka 4(s) = \begin{cases}
  \ka 1(s) & 0 \leq s \leq 8 \\
  \ka 2(s) & 8 < s \leq 16 \\
  \ka 1(s) & 16 < s \leq 24 \end{cases}
\eeq
and then periodically extend to two $C^\infty$ functions on $\mathbb{R}$ with minimal period 16, and 24.

We see that $\int_0^{16}\ka 3(s)ds = \frac{2\pi}{3}$, and $\int_0^{24}\ka 4(s)ds = \pi$ and so by Lemma~\ref{lem-closed} these are curvatures of two closed curves $\Gamma_3$ and $\Gamma_4$ with arc-length parameterization $\gamma_3$ and $\gamma_4$ of length 48 and symmetry index 3 and 2 respectively shown in Figure~\ref{fig:Cinf-sym}.
By the same reasoning above, the signatures of $\Gamma_3$ and $\Gamma_4$ are identical.

\begin{figure}
  \centering
    \centering
    \subfigure[$\Gamma_3$ has symmetry group $\mathbb{Z}_3$.]{
    \label{fig:c3}
    \includegraphics[width=6cm]{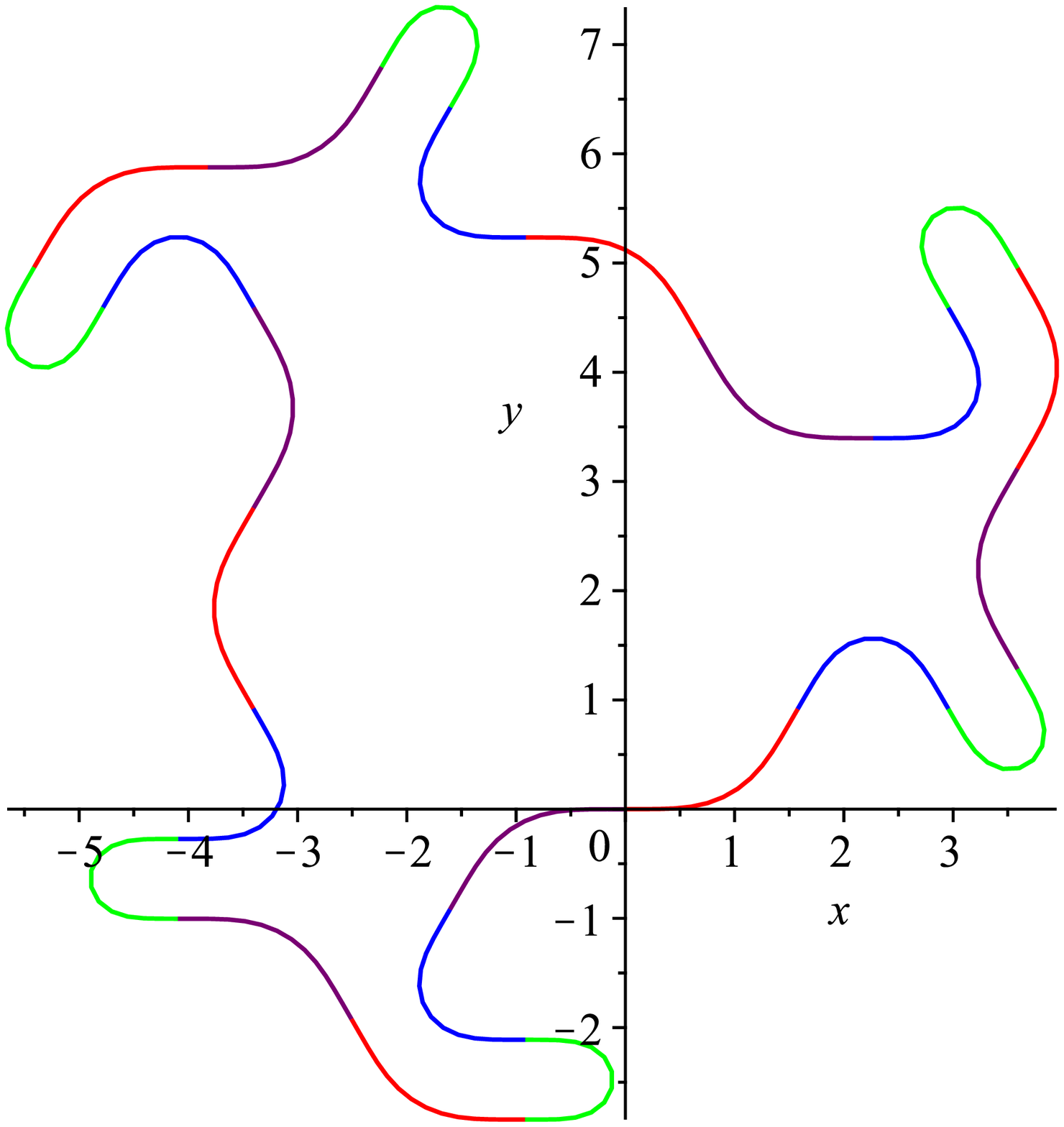}
    }
    \subfigure[$\Gamma_4$ has symmetry group $\mathbb{Z}_2$.]{
    \label{fig:c4}
    \includegraphics[width=6cm]{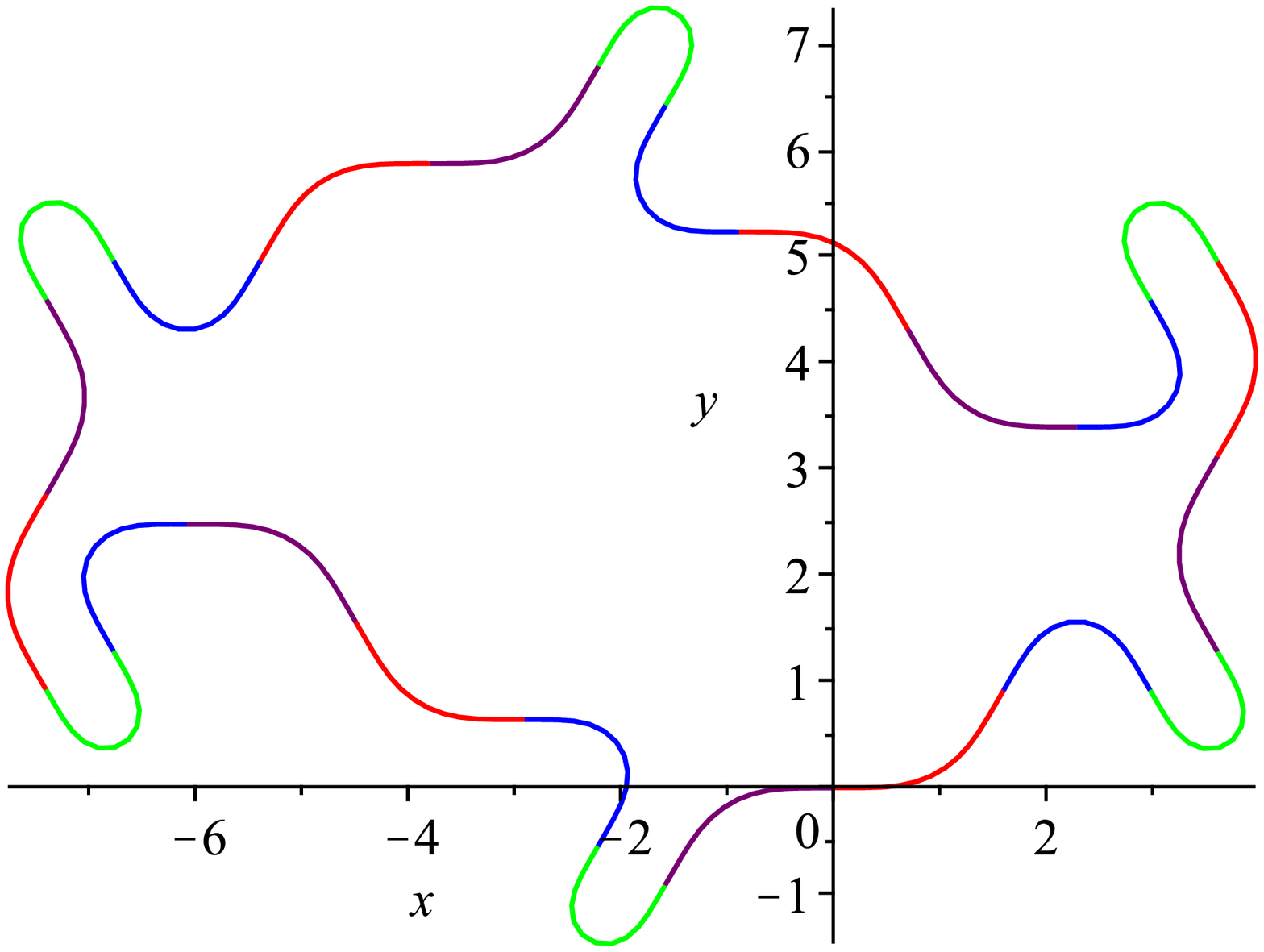}
  }
  \caption{$C^\infty$ curves with same signature curve, and different symmetry groups with curvatures $\ka 3$ and $\ka 4$ given by (\ref{ka3}) and (\ref{ka4}), respectively.}
    \label{fig:Cinf-sym}
\end{figure}
\begin{remark}\label{rem-loc-sym} 
  We can use curves constructed in this section to illustrate the notion of local symmetries based at a point (see Definition~\ref{def-loc-sym}).  Note that for the four curves pictured in Figures~\ref{fig:Cinf} and~\ref{fig:Cinf-sym} pieces of the same color are congruent.  If we take a point $p$ on a piece of certain color (say red) then a local symmetry based at this point is a composition of a rotation and translation that  maps this piece to another piece of the same color (red). Examining more closely, we see that for the two curves pictured in Figure~\ref{fig:Cinf} , the local symmetry set based at any point  coincides with the global symmetry group of the curve, i.e. the group  $\Z_6$, generated by rotations by 60 degrees. For the curves pictured in Figure~\ref{fig:Cinf-sym}, the situation is more subtle. Let $p$ be any point which is not  an end-point of a colored curve segment. Then the cardinality of the set of local symmetries at $p$ (the symmetry index at $p$) is 6 (the number of pieces of the same color), but these local symmetries, consisting of certain compositions of rotations and translations, \emph{do not} form a subgroup of $SE(2)$. At the end-points of the colored pieces, the situation is even more interesting. For instance, the end-points between a green and purple segments have symmetry index 3 for    $\Gamma_3$ and $4$ for  $\Gamma_4$.  The global symmetry groups   for these curves: $\Z_3 $ for $\Gamma_3$ and $\Z_2$ for $\Gamma_4$ and they are always contained in each set of local symmetries. {Although the points on the curves in Figure~\ref{fig:Cinf} and all but finitely many points on the curves in Figure~\ref{fig:Cinf-sym}
  have the same symmetry index, this is not a general phenomenon. }
  For instance, for the curve pictured in Figure~\ref{fig:Cinf_sym2_indexSwap_1} the symmetry indices  of points depend on their color: it is 10 for points inside a red segment\footnote{Except for the midpoint of ``long'' red segments, where it is 2, because the long red segment is an end-to-end attachment of two ``short" red pieces.}, 4 for points inside purple  and green pieces, and 6 for points inside blue pieces. It is a fun exercise for a reader to determine the symmetry indices of the end-points of colored segments. In the next section, we show how  the cardinalities of local symmetry sets are reflected in the paths along the signature quivers.
\end{remark}

\section{ Signature quiver}\label{sect-graph}
In Section~\ref{sect-smooth} we constructed our examples by a piecewise reshuffling of the curvature function under a non-continuous bijection $\rho$ of its minimal period interval $[0,\ell]$. In this section, we provide a general mechanism for constructing non-degenerate non-congruent curves with identical signatures. 
We focus our attention  on \emph{non-degenerate closed curves}, whose  signatures have a finite number of self-intersections, leaving the study of a rather exotic class of curves whose signatures have infinitely many points of self-intersection for future research.
We associate a directed graph that may have loops and multiple edges between vertices (a quiver) to a signature curve and show that different paths along the quiver produce non-degenerate non-congruent curves with identical signatures. We establish a one-to-one correspondence between equivalence classes of paths along the signature quiver and congruence classes of non-degenerate curves with the same signature.
To set the stage, we prove the following lemma
\begin{lemma} \label{lem-comp} Let  $S$ be a closed curve with a continuous locally injective parametrization $\sigma$ of period $\ell$.  Assume the set $Q$ of self-intersection points of $S$ is non-empty and finite. Then  
$S \backslash Q$  is the union   of finitely  many disjoint open curve segments $\{S_{i}\}$, $i=1,\dots, N$, and, if $I$ is a connected component of $\sigma^{-1}(S_i)$, then $\sigma|_I$ is a homeomorphism.  
\end{lemma}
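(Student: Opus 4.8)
The plan is to analyze the structure of $S \setminus Q$ first as a topological space, then lift the analysis to the parameter circle $\R/\ell\Z$ via $\sigma$. First I would observe that since $S$ is the image of a continuous locally injective map $\sigma$ from the circle $\R/\ell\Z$ and $Q$ is finite, $S$ is a compact set and a one-dimensional ``topological graph'': away from $Q$ it is locally homeomorphic to $\R$, while each point of $Q$ has a neighborhood in $S$ homeomorphic to a finite union of arcs glued at a point. Removing the finite set $Q$ from $S$ therefore yields an open subset of $S$ that is locally homeomorphic to $\R$ and has finitely many connected components; each component $S_i$ is a connected, locally Euclidean, second-countable Hausdorff space of dimension one without boundary, hence homeomorphic to either $\R$ or to a circle $\mathbb{S}^1$. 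I would rule out the circle case: a component homeomorphic to $\mathbb{S}^1$ would be both open and closed in $S$ (closed because a circle component of $S\setminus Q$ would have empty boundary relative to $S$, as its closure cannot meet $Q$), contradicting connectedness of $S$ unless $S$ itself is that circle — but then $Q=\emptyset$, contrary to hypothesis. Hence each $S_i$ is homeomorphic to $\R$, and there are finitely many of them, establishing the first assertion.

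Next I would turn to the preimage statement. Fix a component $S_i$ and let $I$ be a connected component of $\sigma^{-1}(S_i)$, an open subset of the circle $\R/\ell\Z$, hence an open arc (it cannot be the whole circle since $\sigma^{-1}(Q)$ is nonempty and finite and disjoint from $\sigma^{-1}(S_i)$). The map $\sigma|_I \colon I \to S_i$ is continuous and locally injective. I would then argue it is surjective onto $S_i$: the image $\sigma(I)$ is connected and, because $I$ is a connected component of $\sigma^{-1}(S_i)$, its boundary points in $\R/\ell\Z$ map into $Q$ (a limit of points of $I$ not in $I$ cannot map into $S_i$, else it would lie in a slightly larger connected subset of $\sigma^{-1}(S_i)$), so $\sigma(I)$ is open in $S_i$; it is also closed in $S_i$ since any limit point in $S_i$ of $\sigma(I)$ is the image of a limit point of $I$ in the compact circle, and that limit point is either in $I$ or maps to $Q$ — the latter impossible — so the limit lies in $\sigma(I)$. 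By connectedness of $S_i$, $\sigma(I) = S_i$. Then $\sigma|_I \colon I \to S_i$ is a continuous locally injective surjection between spaces each homeomorphic to $\R$; I would invoke the intermediate value theorem argument already used in the proof of Proposition~\ref{prop-simple} (a locally injective continuous surjection from an open interval onto $\R$ is strictly monotone, hence injective, hence a homeomorphism).

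The main obstacle I anticipate is making the ``local structure of $S$ at a self-intersection point'' rigorous enough to justify that $S\setminus Q$ is locally Euclidean and has finitely many components without waving hands: one must use that $\sigma$ is locally injective on the \emph{compact} circle, so for each point $p\in S$ the preimage $\sigma^{-1}(p)$ is finite, and near each preimage $\sigma$ is an embedding of a small arc, so a neighborhood of $p$ in $S$ is a finite union of arc-germs through $p$; points of $Q$ are exactly those $p$ with $\sigma^{-1}(p)$ having more than one point (or, in the presence of double-orientation subtleties, where these arc-germs fail to fit together into a manifold chart — but for signature curves of the type considered this is governed purely by $\#\sigma^{-1}(p)$). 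The finiteness of the number of components $S_i$ then follows because their boundaries in $S$ are contained in the finite set $Q$ and each point of $Q$ has only finitely many arc-germs. Everything else is a careful but routine combination of point-set topology and the monotonicity lemma already established earlier in the paper.
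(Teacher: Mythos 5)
Your argument is essentially correct, but it is organized in the opposite direction from the paper's proof, and the comparison is instructive. You work \emph{image-first}: you analyze $S\setminus Q$ as a topological graph, classify its components as $1$-manifolds (ruling out circle components by a clopen/connectedness argument), prove surjectivity of $\sigma|_I$ onto $S_i$ by showing $\sigma(I)$ is clopen in $S_i$, and obtain finiteness of the number of components by counting arc-germs at the points of $Q$ --- which forces you to first establish the local ``finite union of arcs glued at a point'' structure of $S$ at each $q\in Q$, the step you rightly flag as the delicate one. The paper works \emph{parameter-first}: it orders the finite set $\sigma^{-1}(Q)\cap[0,\ell]$ as $0=c_0<\dots<c_k=\ell$, and for each interval $I_j=(c_{j-1},c_j)$ applies the classification of $1$-manifolds to the component $\hat S\supset\sigma(I_j)$ and the IVT monotonicity lemma to $\phi\circ\sigma|_{I_j}$, concluding $\sigma(I_j)=\hat S$ because a proper image interval $J\subsetneq\R$ would force a finite limit at an endpoint and hence put a point of $Q$ inside $\hat S$. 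Finiteness of the $\{S_i\}$ and the homeomorphism claim for an arbitrary component $I$ of $\sigma^{-1}(S_i)$ then come for free, since surjectivity of $\sigma$ and periodicity make every such $I$ a translate of some $I_j$; this sidesteps the arc-germ analysis entirely. Two soft spots in your version are worth repairing: (i) your stated reason that $\sigma(I)$ is open in $S_i$ (``boundary points of $I$ map into $Q$'') is a non sequitur --- openness should instead be extracted from the same monotonicity lemma you invoke at the end, applied to $\phi\circ\sigma|_I$, whose image is then an open subinterval of $\R$ (this is exactly the order in which the paper deploys the lemma); and (ii) your closedness argument should also dispose of the case that a boundary point of $I$ maps into some $S_j$ with $j\neq i$, which follows because $\sigma^{-1}(S_j)$ is open and would then meet $I$. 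Neither issue is fatal, but the paper's parameter-side bookkeeping avoids both.
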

\begin{proof} The restriction $\sigma|_{[0,\ell]}$ is a proper map \footnote{A map between topological spaces is called \emph{proper} if the preimage of every compact subset is compact. Any continuous map with a compact domain to $\R^2$ (and, more generally, to any Hausdorff space) is proper.}.  Up to  a shift in the parameterization, we can assume that $\sigma(0)\in Q$. Then 
$\sigma^{-1}(Q)\cap [0,\ell]$ is a finite set of points which can be ordered as follows $0=c_0< c_1<\dots <c_{k}=\ell$.  For $j=1,\dots, k$, let  $I_j=(c_{j-1},c_{j})$. The image $\sigma(I_j)$ of an interval $I_j$ is contained in a connected component $\hat S$ of $S \backslash Q$.
The curve piece $\hat S$ does not contain any self-intersection points and, therefore is locally Euclidean. Hence $\hat S$ is a one dimensional topological manifold embedded in $\R^2$.  It is  not compact because  it does not contain limit points $\sigma(c_{j-1})$ and $\sigma(c_{j})$.
By the classification of 1-dimensional topological manifolds, there exists a homeomorphism $\phi\colon \hat S\to\R$,  and $\hat S$ is an open curve segment.  By the intermediate value theorem, a locally injective,  surjective continuous  map $\phi\circ\sigma|_ {I_j}$  is a homeomorphism on its image $J\subset\R$, where $J$ is an open connected subset of $\R$.
If the closure of $J$ were a proper subset of $\R$, then  at least one of the limits $\lim_{t\to c_{j-1}} \phi\circ\sigma$ or $\lim_{t\to c_{j}} \phi\circ\sigma$ would be finite. This would imply that  $\hat S$  contains $\sigma(c_{j-1})\in Q$ or $\sigma(c_{j})\in Q$, contradicting the assumption that $\hat S\subset S \backslash Q$.
 Therefore, $J=\R$, $\sigma(I_j)=\hat S$ and $\sigma|_{I_j}\colon I_j\to \hat S$  is a homeomorphism.

Conversely, since 
$Q$ is a closed subset of $S$ in the subset topology, then  $S\backslash Q$  is open in the subset topology. Let $\hat S$ be a connected component of  $S\backslash Q$. The preimage $\sigma^{-1}(\hat S)$ is a union of open intervals, and since $\ell$ is the period of $\sigma$, then for any connected component $I\subset \sigma^{-1}(\hat S)$, there exists an interval $I_j\subset[0,\ell]$ defined in the previous paragraph and a constant $c$ such that for all $t\in I$,  $\sigma|_I (t)=\sigma|_{I_j}(t+c)$ and so is homeomorphism.  Since the number of open  intervals $I_j$, $j=1,\dots,k$,  defined in the previous paragraph is finite, so is the number of  connected components   $S\backslash Q$.
 
\end{proof}

 \begin{definition}[Signature quiver]\label{def-graph} Let $S$ be an oriented closed curve with at least one, but a finite number, of self-intersection points $q_1,\dots,q_n$, $n\geq 1$. By removing these points from $S$ we obtain a collection of disjoint
open  curve segments $\{S_{i}\}$, $i=1,\dots, N$. The \emph{quiver $\Delta_S$} of $S$ is defined as follows. The points, $q_1,\dots,q_n$ correspond to \emph{vertices}\footnote{A terminology warning: please do not confuse the vertices of the simple planar curve $\Gamma$ as in Definition~\ref{def-vertex} and vertices of the quiver  $\Delta_S$.}
 of $\Delta_S$. Each curve segment $\{S_{i}\}$, $i=1,\dots, N$  corresponds to an \emph{edge} connecting the appropriate vertices (corresponding to the end points of the segment $\{S_{i}\}$ in $S$), whose \emph{direction} is dictated by the orientation of $S$. If $S_\Gamma$ is the signature of a curve $\Gamma$, we call the quiver  $\Delta_{S_\Gamma}$ \emph{the signature quiver of $\Gamma$}.
 \end{definition}

 In what follows, we will  identify and use the same notation for a quiver-vertex $q_j$ and the corresponding  self-intersection point $q_j\in S$, as well as for a quiver-edge $S_i$ and the corresponding curve segment  $S_i\subset S$. 

\begin{definition}[Path]\label{def-path}
 Given a quiver, we define a \emph{path} to be a sequence of edges $S_{i_1}\dots S_{i_j}$, such that for $k=1,\dots,j-1$, the ending vertex of the edge $S_{i_k}$ is the beginning vertex of the following edge $S_{i_{k+1}}$. If the ending vertex of the last edge in a path coincides with the beginning vertex of the first edge in a path, the path is called \emph{closed}. We  call closed paths \emph{equivalent} if they agree up to a cyclic permutation. Each path induces a \emph{multiplicity} of an edge -  the number of times an edge is included in the path.  If each edge is contained in a path we  call such a path \emph{complete}.
 Two paths along the same quiver are called \emph{compatible} if they induce the same multiplicities.  \end{definition}
A non-degenerate closed curve $\Gamma$ with a prescribed parameterization   $\gamma(t)$   induces a  complete, closed  path on its signature quiver $\Delta_{S_\Gamma}$. Indeed, assume $\gamma(t)$ has a minimal period $L$. By possibly a shift of the parameter, we may assume that $\sigma_\gamma(0)$ is one of the points of self intersection of $S_\Gamma$.
Let $Q$ be the self intersection points of the signature of $\Gamma$. Similar to  what we did in the proof of Lemma~\ref{lem-comp}, the finite set  of points $\sigma_\gamma^{-1}(Q)\cap [0,L]$  can be ordered as follows $0=c_0< c_1<\dots <c_{k}=L$. This gives rise to a sequence of edges $S_i=\sigma_\gamma(I_i), i=1,\dots, k$, where $I_i=(c_{i-1},c_{i})$. As $t$ varies from $0$ to $L$,  the   map $\sigma_\gamma$ induces a path along $\Delta_{S_\Gamma}$. 
It is not difficult to see that if $\mu_i$ is the multiplicity of the edge $S_i$ induced by this path, then $\sigma_\gamma^{-1}(S_{i})\cap (0,L)$  is the union of $\mu_i$  disjoint open intervals. For simple curves it follows that any other parameterization $\tilde \gamma(t)$, such that   $\sigma_{\tilde\gamma}(0)$ is a self intersection-point, induces an equivalent path. {Non-simple curves may allow different parameterizations that induce non-equivalent paths (see Remark~\ref{rem-flower}).
  \begin{example}\label{ex-quiver}Figure~\ref{fig:directed-graph} shows the signature quiver for the signature shown in Figure~\ref{fig:CinfSig} from Section~\ref{sect-smooth}. It has a single vertex and 4 edges (loops). It turns out that the four paths induced by curves $\Gamma_1, \dots,\Gamma_4$ in Figures~\ref{fig:Cinf} and~\ref{fig:Cinf-sym} are compatible, inducing multiplicity 6 on each of the four edges. We find it useful to \emph{label} each edge by a letter and to use \emph{exponents} to indicate the multiplicities of the edges induced by a path. More graphs are shown in the subsequent sections, including graphs with more than one vertex (Figures~\ref{fig:MNgraph}) and with edges with varying multiplicities (Figures~\ref{fig:cogGraph}).
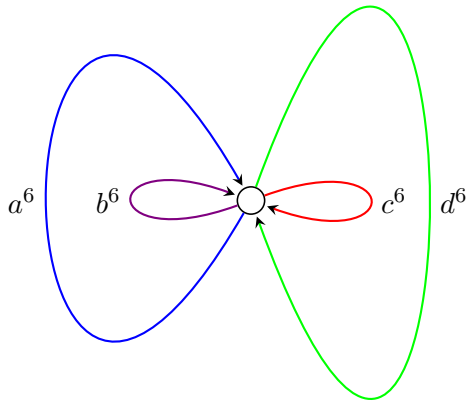
\begin{figure}
\centering
\begin{tikzpicture}[
            > = stealth, 
            shorten > = 1pt, 
            auto,
	    node distance = 2cm, 
            semithick 
        ]

        \tikzstyle{every state}=[
            draw = black,
            thick,
            fill = white,
            minimum size = 4mm
        ]
	 \useasboundingbox (-3,-3) rectangle (3,3);

	\node[shape=circle,draw=black] (5) {};

	 \path[->] (5) edge [out=20, in=340, loop, distance=2cm, draw=red, thick] node {$c^6$} (5);
	 \path[->] (5) edge [out=200, in=160, loop, distance=2cm, draw=violet, thick] node {$b^6$} (5);
	 \path[->] (5) edge [out=70, in=290, loop, distance=9cm, draw=green, thick] node {$d^6$} (5);
	 \path[->] (5) edge [out=240, in=120, loop, distance=7cm, draw=blue, thick] node {$a^6$} (5);
     \end{tikzpicture}
     \caption{The  signature quiver for the signature in Figure~\ref{fig:CinfSig}. 
     The superscript on each of the labeled edges denotes the multiplicity of the paths induced by Figures~\ref{fig:Cinf}, \ref{fig:Cinf-sym}, and \ref{fig:Cinf-ex}.}
\label{fig:directed-graph}
  \end{figure}
\end{example}


As in Example~\ref{ex-quiver}, we  continue to label edges by letters and we  interchangeably use terms ``path'' and ``word''. We call a path (a word) $W$ \emph{periodic} if it can be written $W = (w)^m$ such that $m > 1$.
If the word $W$ is periodic and $W = (w)^m$ such that $w$ is not periodic, then the $w$ is called a \emph{minimal repeated subword}, while the length of $w$ is called the \emph{minimal period} of $W$. The words generated by the curves $\Gamma_1, \Gamma_2, \Gamma_3$, and $\Gamma_4$ in Figures~\ref{fig:Cinf} and~\ref{fig:Cinf-sym} in Section~\ref{sect-smooth} are $(cadb)^6, (cdab)^6, (cadbcdab)^3$, and $(cadbcdabcadb)^2$, respectively.


The following proposition shows the relationships between the multiplicities induced  by $\Gamma$ on  its signature quiver and various indices introduced in Section~\ref{sect-prelim}
\begin{proposition}[Multiplicities and indices]\label{prop-mlt} Let $\Gamma$ be a simple, non-degenerate, closed curve, whose signature $S_\Gamma$ has finitely many points  of self-intersection: $q_1,\dots,q_n$, $n\geq1$ (corresponding to signature-quiver-vertices). By removing these points from $S_\Gamma$ we obtain a collection of disjoint open curve segments, $\{S_{i}\}$, $i=1,\dots, N$ (corresponding to the signature-quiver-edges).  For  $i\in\{1,\dots,N\}$, let   $\mu_i$ be the multiplicity of the edge $S_i$ induced by $\Gamma$. 
Then the preimage 
$\sigma_\Gamma^{-1}(S_i)$, under the signature map consists of $\mu_i$ connected congruent open curve segments     $\Gamma_i^1,\dots, \Gamma_i^{\mlt_i}$. 
For $i=1,\dots N$, the local signature index and the local symmetry index for any $p\in  \Gamma_i^j$, $j=1,\dots,\mlt_i$ are equal to $\mu_i$: 
\beq\label{eq-mlt}\mlt_i=\text{sig-index}(\Gamma_p)= \text{sym-index}(\Gamma_p).\eeq 
In addition, the (global) signature index of $\Gamma$:
\beq\label{eq-mlt-min}\text{sig-index}(\Gamma)=\min_{i=1,..,N}\{\mlt_i\}.\eeq
\end{proposition}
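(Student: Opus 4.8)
\textbf{Proof plan for Proposition~\ref{prop-mlt}.}

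The plan is to fix an arc-length parameterization $\gamma\colon\R\to\R^2$ of $\Gamma$ with minimal period $L$ and, after a shift, arrange that $\sigma_\gamma(0)$ is one of the self-intersection points $q_1,\dots,q_n$ of $S_\Gamma$. As described before the statement, the finite set $\sigma_\gamma^{-1}(Q)\cap[0,L]$ partitions $[0,L]$ into intervals $I_1,\dots,I_k$ on each of which $\sigma_\gamma$ maps homeomorphically onto one of the segments $S_i$ (this is Lemma~\ref{lem-comp} applied to the periodic parametrization $\sigma_\gamma$ of period $L$; note $\Gamma$ non-degenerate means $\sigma_\gamma$ is locally injective, so the lemma's hypotheses hold). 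The key first step is then purely combinatorial/topological: by definition of the induced multiplicity, exactly $\mu_i$ of the intervals $I_1,\dots,I_k$ map onto the fixed segment $S_i$; writing these as $J_i^1,\dots,J_i^{\mu_i}$, we get $\sigma_\gamma^{-1}(S_i)\cap[0,L)=\bigsqcup_{j=1}^{\mu_i}J_i^j$, and since $\Gamma$ is simple, $\gamma$ is injective on $[0,L)$, so $\Gamma_i^j:=\gamma(J_i^j)$ are $\mu_i$ disjoint open curve segments whose union is $\sigma_\Gamma^{-1}(S_i)$.

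Next I would establish the pairwise congruence of $\Gamma_i^1,\dots,\Gamma_i^{\mu_i}$ together with equation~\eqref{eq-mlt}. For any two intervals $J_i^a$ and $J_i^b$, both $\sigma_\gamma|_{J_i^a}$ and $\sigma_\gamma|_{J_i^b}$ are homeomorphisms onto the same set $S_i\cong\R$, so Proposition~\ref{prop-loc} applies directly (with $I_1=J_i^a$, $I_2=J_i^b$, $\hat S=S_i$) and yields $\Gamma_i^a\cong\Gamma_i^b$. For the local signature index at $p\in\Gamma_i^j$: the point $\zeta=\sigma_\Gamma(p)$ lies on the open segment $S_i$ and not on $Q$, so $\sigma_\Gamma^{-1}(\zeta)$ meets each $\Gamma_i^{j'}$ in exactly one point (since $\sigma_\gamma|_{J_i^{j'}}$ is injective and surjective onto $S_i$) and meets no other piece; hence $\#\sigma_\Gamma^{-1}(\zeta)=\mu_i$, i.e.\ $\text{sig-index}(\Gamma_p)=\mu_i$. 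For the local symmetry index, I would use the congruences just obtained: each congruence $\Gamma_i^a\to\Gamma_i^b$ is realized by some $g_{ab}\in SE(2)$, and because $p$ is a simple point of $\Gamma$ with a well-defined orientation, the footnote's observation that $\sym^*(\Gamma_p)$ is trivial means these $g_{ab}$ are exactly the local symmetries based at $p$; one checks that the element moving the $S_i$-piece containing $p$ to the piece over $\Gamma_i^b$ is unique, giving a bijection between $\sym(\Gamma_p)$ and $\{\Gamma_i^1,\dots,\Gamma_i^{\mu_i}\}$, so $\text{sym-index}(\Gamma_p)=\mu_i$. This also needs the converse direction: any local symmetry at $p$ must carry a neighborhood of $p$ to a neighborhood of another point with the same $(\kappa,\dot\kappa)$ value, hence into some $\Gamma_i^{j'}$, so nothing is missed. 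Finally, \eqref{eq-mlt-min} is immediate: by Definition~\ref{def-sig-index}, $\text{sig-index}(\Gamma)=\min_{p\in\Gamma}\text{sig-index}(\Gamma_p)$, and as $p$ ranges over $\Gamma$ the value $\text{sig-index}(\Gamma_p)$ equals $\mu_i$ on each $\Gamma_i^j$ and is $\geq\min_i\mu_i$ at the finitely many points mapping into $Q$ (a self-intersection point of $S_\Gamma$ is a limit of points on at least two segments $S_i$, so its preimage count is at least the sum of the relevant $\mu_i$, certainly $\geq\min_i\mu_i$); hence the minimum over all of $\Gamma$ is $\min_{i}\mu_i$.

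I expect the main obstacle to be the precise bookkeeping in the local symmetry claim $\text{sym-index}(\Gamma_p)=\mu_i$: one must argue both that every congruence between two $S_i$-pieces is a \emph{local} symmetry in the sense of Definition~\ref{def-loc-sym} (choosing the neighborhood $U$ correctly, and handling the fact that a global symmetry of $\Gamma$ composed with such a congruence is again such a congruence, so the count is consistent) and that there are no \emph{other} local symmetries based at $p$ beyond these $\mu_i$ maps. The cleanest route is probably to invoke the divisibility lemma preceding this proposition together with the rigidity of $SE(2)$-elements on a non-degenerate arc (a local symmetry based at $p$ is determined by where it sends $p$ and the unit tangent there, and the image must be a point with the same signature value, i.e.\ one of the $\mu_i$ candidates), reducing the symmetry count to the signature count; comparison with Proposition~3.8 of \cite{olver15} confirms the expected equality when $\zeta$ is a simple point of the signature.
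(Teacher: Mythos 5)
Your proposal is correct and follows essentially the same route as the paper's proof: decompose $\sigma_\gamma^{-1}(S_i)\cap(0,L)$ into $\mu_i$ intervals, apply Proposition~\ref{prop-loc} to get congruent pieces $\Gamma_i^j$, count preimages of $\zeta=\sigma_\Gamma(p)$ one per piece for the signature index, and bound the local symmetry set above by showing a local symmetry must send $p$ into one of the $\mu_i$ pieces with a uniquely determined $SE(2)$ element (the paper realizes this uniqueness by composing with the known congruence $\hat g$ and using injectivity of $\sigma_\Gamma|_{\Gamma_i^j}$ to force the composite to fix the arc pointwise, which is the same rigidity fact you invoke). The treatment of \eqref{eq-mlt-min} at points mapping to self-intersections also matches the paper's argument.
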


\begin{proof}Let $\gamma\colon \R\to \Gamma$ be a parameterization of $\Gamma$ with a minimal period $L$ and $\sigma_\gamma=\sigma_\Gamma\circ\gamma$ be the parameterized signature map defined in Definition~\ref{def-sig}. By shifting the parameter, we can always assume that $\sigma_\gamma(0)$ is one of the self intersection points, say $q_1$.
Then, since $\sigma_\gamma=\sigma_\Gamma\circ\gamma$ is continuous, $\sigma_\gamma^{-1}(S_{i})\cap (0,L)$ is the union of $\mu_{i}$ disjoint open intervals $I_i^1,\dots, I_i^{\mu_i}$.  Since  a phase portrait has a well defined orientation and the values where $\dot{\kappa}=0$ are discrete, the restriction of $\sigma_\gamma$ to each of these intervals is an injective continuous map and therefore is a homeomorphism on its image $S_i$ (the continuous bijection between two sets homeomorphic to open intervals is a homeomorphism). Since $\gamma$ is simple, it is also true that $\gamma|_{I_i^j}$ is also a homeomorphism on its image. It follows that  $\sigma_\Gamma^{-1}(S_i)$ is a collection of open curve segments and the map $\gamma$ induces a bijection between the set  $\{\sigma_\gamma^{-1}(S_i)\}$ and  $\{\sigma_\Gamma^{-1}(S_i)\}$.
 We order $\{\Gamma_i^k\}$, so that $\gamma(I_i^j)=\Gamma_i^j$.
Again using the injectivity of the restrictions $\sigma_\gamma|_{I_i^j}$ (and hence the injectivity $\sigma_\Gamma|_{\Gamma_i^j}$), we can now conclude that
\beq\label{eq-mult-aux}\mlt_i=\text{sig-index}(\Gamma_p) \text{ for any $p\in  \Gamma_i^j$, $j=1,\dots,\mlt_i$}.\eeq
  
The second equality in (\ref{eq-mlt}) could be derived from the more general Proposition 3.8 of \cite{olver15}, however we found it instructive to provide an explicit proof in this simpler case. {From Proposition~\ref{prop-loc}, it follows that  for any fixed $i=1,\dots,\mu_i$, all curve pieces $\Gamma_i^j$, $j=1,\dots,\mlt_i$ are congruent.  Therefore, the local symmetry set $\sym(\Gamma_p)$ contains at least $\mu_i$ elements. On the other hand, assume that  for some  $j=1,\dots,\mlt_i$ and  $p\in  \Gamma_i^j$, there exists  an open connected subset $U\subset \R^2$, containing $p$ and $g\in SE(2)$ satisfying the local symmetry condition (\ref{eq-loc-sym}). We may assume, by possibly shrinking $U$, that $\Gamma\cap  U\subset \Gamma_i^j$ and so $\sigma_\Gamma(\Gamma\cap U)\subset S_i$. Then due to the invariance of  the signature map $\sigma_\Gamma$, we have $\sigma_\Gamma(g\cdot (U\cap \Gamma))\subset S_i$. 
 Therefore, there exists $k\in {1,\dots,\mu_i}$, such that   $g\cdot (U\cap \Gamma)=(g\cdot U)\cap \Gamma\subset \Gamma_i^k$. Due to the congruence of these curve pieces, there exists $\hat g\in SE(2)$,  such that $\hat g\cdot \Gamma_i^j= \Gamma_i^k$. Then $g_1=\hat g^{-1} g$ is a local symmetry based at $p$, such that $g_1\cdot (U\cap \Gamma_i^j)\subset \Gamma_i^j$. Since the signature map is group invariant, we have that for any $p_1\in U\cap \Gamma_i^j$, $g_1p_1\in g_1 U\cap \Gamma_i^j$ and $\sigma_\Gamma(p_1)=\sigma_\Gamma(g_1p_1)$.   Since  $\sigma_\Gamma|_{\Gamma_i^j}$ is injective, this implies that $g_1p_1=p_1$ and so  $g_1\in SE(2)$ fixes (i.e. maps to itself) every point of $U\cap \Gamma_i^j$. The only element of $SE(2)$ that can fix every point of a curve segment  is the identity. Thus $g=\hat g$ and }

  $$\mlt_i=\text{sym-index}(\Gamma_p) \text{ for any $p\in  \Gamma_i^j$, $j=1,\dots,\mlt_i$}.$$
  Finally, to establish (\ref{eq-mlt-min}), we note that if $p\in\Gamma$ is such that $q=\sigma_\Gamma(p)$ is a self-intersection point of  $S_\Gamma$, then by continuity of $\sigma_\Gamma$,  for any edge $S_i$ that originates  at $q$, we have at least $\mu_i$  distinct preimages of $q$ under the signature map $\sigma_\Gamma$. Since   every self-intersection point of $S_\Gamma$ is a starting point for an edge,  it   follows that $\text{sig-index}(\Gamma_p)\geq \min_{i=1,..,N}\{\mlt_i\}$, and then (\ref{eq-mlt-min}) follows from (\ref{eq-sig-ind}) and~(\ref{eq-mult-aux}).
\end{proof}

\begin{remark} 
  If $\Gamma$ is not simple, then, in the context of Proposition~\ref{prop-mlt}, $\sigma_\Gamma^{-1}(S_i)$  may be  a union of disjoint open curve segments and curve pieces that are not homeomorphic to an open interval (see, for instances, red pieces of the curve pictured in Figure~\ref{fig:Cinf_notSimple}), and the statements about indices in  Proposition~\ref{prop-mlt} may not hold.
\end{remark}

We now formulate a symmetry and congruence criteria for  non-degenerate  curves in terms of paths they induce on  their signature quivers.
\begin{proposition}[Symmetry in terms of the signature quiver]\label{prop-sym-graph} Assume $\Gamma$ is a {simple} closed non-degenerate curve, whose signature $S_\Gamma$ has at least one, but a finite number of self intersections. If $\Gamma$ induces a word $W = (w)^m$ such that $w$ is not a periodic word, then
$$m=\symi(\Gamma)$$ 
and so $sym(\Gamma)$ is the  rotation group congruent to $\Z_m$. 
\end{proposition}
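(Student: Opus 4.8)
The plan is to connect the combinatorial period $m$ of the word $W$ to the geometric symmetry index by going through the curvature function. First I would pick an arc-length parameterization $\gamma\colon\R\to\Gamma$ with minimal period $L$, normalized (by a shift) so that $\sigma_\gamma(0)$ is a self-intersection point of $S_\Gamma$. As explained in the discussion preceding Proposition~\ref{prop-mlt}, the parameterized signature map then partitions $[0,L]$ into consecutive subintervals $I_1,\dots,I_k$ at the preimages $c_0<c_1<\dots<c_k$ of the self-intersection set $Q$, and the sequence of edges $S_{i_1}\dots S_{i_k}$ traced out is exactly the word $W$. Because $\Gamma$ is simple, any other such parameterization induces an equivalent (cyclically permuted) word, so $W$ is well-defined up to cyclic permutation and its minimal period $|w|$ is a genuine invariant of $\Gamma$.

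Next I would translate periodicity of the word into periodicity of the curvature. Write $W=(w)^m$ with $w$ not periodic, and let $q=|w|$, so $k=mq$. The key observation is that the ``breakpoints'' $c_0,\dots,c_k$ together with the curvature data on each $I_j$ determine, and are determined by, $\kappa$ up to the choice of starting point: on $I_j$ the restriction $\sigma_\gamma|_{I_j}$ is a homeomorphism onto the edge $S_{i_j}$ (Lemma~\ref{lem-comp}), and two intervals carrying the same edge-label yield curvature pieces that are horizontal shifts of one another with shift equal to $1$ on the overlap of their signature images — this is precisely the $\rho'=1$ argument of Proposition~\ref{prop-loc}. Since the word has period $q$, the edge traced on $I_{j+q}$ equals the edge traced on $I_j$ for every $j$, and summing the lengths of the corresponding intervals gives $c_{j+q}-c_{j}=c_q-c_0=\ell$, say, independent of $j$; hence $\kappa(s+\ell)=\kappa(s)$ for all $s$, with $L=m\ell$. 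Conversely, if $\kappa$ had a period strictly smaller than $\ell$, that smaller period would be an integer combination of the interval lengths forcing the word $w$ itself to be periodic, contradicting minimality. Therefore $\ell$ is the \emph{minimal} period of $\kappa$ and $m=L/\ell$.

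Finally I would invoke Proposition~\ref{prop-integrals}(2): for a simple closed curve, $L/\ell=\symi(\Gamma)$, so $m=\symi(\Gamma)$. The remark after Definition~\ref{def-sym} (that a closed curve's symmetry group is isomorphic to a subgroup of $SO(2)$, i.e. a finite cyclic group) then gives $\sym(\Gamma)\cong\Z_m$, realized as the rotation group by $2\pi/m$ about the center $C\notin\Gamma$ identified in Lemma~4 of \cite{Musso2009}.

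The main obstacle will be the second paragraph: carefully justifying that equal edge-labels on two parameter-intervals force the corresponding curvature arcs to be \emph{exact} horizontal translates with a controlled, consistent shift, so that the interval lengths $c_{j+q}-c_j$ are literally constant in $j$ rather than merely ``corresponding up to reparameterization.'' This requires the $\rho'=1$ mechanism of Proposition~\ref{prop-loc} applied edge-by-edge and then patched across the self-intersection vertices using continuity of $\kappa$ and $\dot\kappa$ — the subtlety being that at a vertex $q_j\in S_\Gamma$ several edges meet, so one must track which outgoing edge the path actually takes and verify the shifts align coherently along the whole period. The other point needing care is the converse direction — ruling out a period of $\kappa$ smaller than $\ell$ — which hinges on the non-periodicity of the minimal subword $w$ and on the fact that the breakpoint pattern is rigid because $\dot\kappa$ vanishes only at isolated points (non-degeneracy), so the signature map is locally injective and the edge-decomposition is canonical.
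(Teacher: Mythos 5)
Your proposal follows essentially the same route as the paper's proof: normalize an arc-length parameterization so that $\sigma_\gamma(0)$ lies over a self-intersection point, deduce from the word structure that $\kappa$ is periodic with period $\ell=L/m$, and conclude via Proposition~\ref{prop-integrals}(2) that $m=L/\ell=\symi(\Gamma)$. The paper simply asserts the $\ell$-periodicity of $\kappa$ ``by construction,'' whereas you spell out the shift argument from Proposition~\ref{prop-loc} and also verify that $\ell$ is the \emph{minimal} period of $\kappa$ (using non-periodicity of $w$), a point the paper's one-line proof leaves implicit; this is a welcome extra degree of care rather than a different method.
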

\begin{proof} Take $\gamma$ to be an arc-length parameterization of $\Gamma$, such that $\sigma_\gamma(0)$  is a self-intersection point of the signature $S_\Gamma$ and let $\kappa\colon\R\to\R$ be the corresponding curvature function. Assume that $L$ is the minimal period of $\gamma$ and let $\ell=\frac L m$. 
 By construction for $i=0,\dots, m-1$: 
\beq \label{eq-kappa_i}\kappa|_{[i\ell,(i+1)\ell]}(s)=\kappa|_{[0,\ell]}(s-i\ell). \eeq
and so 
 \beq\label{eq-int-Ll}\int_0^L\kappa(s)ds=m\int_0^\ell\kappa(s)ds=2\pi,
\eeq
and the conclusion follows from {Proposition~\ref{prop-integrals}.}

\end{proof}
\begin{example} The four curves constructed  in Section~\ref{sect-smooth} have the signature quiver pictured on Figure~\ref{fig:directed-graph}.
The two curves in Figure~\ref{fig:Cinf} induce words $(cadb)^6$, and $(cdab)^6$, respectively, while the two curves in Figure~\ref{fig:Cinf-sym}  induce words $(cadbcdab)^3$ and $(cadbcdabcadb)^2$, respectively. Both curves in Figure \ref{fig:Cinf} have symmetry group $\mathbb{Z}_6$, while the curve in Figure~\ref{fig:c3} has symmetry group $\Z_3$, and the one in Figure~\ref{fig:c4} has symmetry group $\Z_2$.
We observe that the exponent on a minimal repeated subword is the symmetry index of the corresponding curve.
\end{example}

\begin{theorem}[Congruence in terms of signature quivers]\label{thm-cong-graph} 
Assume $\Gamma_1$ and $\Gamma_2$ are closed non-degenerate curves with the same signature $S$. Assume $S$ has at least one, but a finite number of self intersections. If there exist parameterizations of  $\Gamma_1$ and $\Gamma_2$ that induce equivalent  paths on $\Delta_S$ then $\Gamma_1$ and $\Gamma_2$ are congruent.  If $\Gamma_1$ and $\Gamma_2$ are simple then the converse is true as well. \end{theorem}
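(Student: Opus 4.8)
The plan is to reduce the statement to Theorem~\ref{thm-cong-non-deg} by translating the combinatorial data of an equivalent path into the existence of a homeomorphism $\rho\colon\R\to\R$ intertwining the two parameterized signature maps. First I would fix parameterizations $\gamma_1,\gamma_2$ of $\Gamma_1,\Gamma_2$ with minimal periods $L_1,L_2$, chosen so that $\sigma_{\gamma_1}(0)$ and $\sigma_{\gamma_2}(0)$ are the same self-intersection point of $S$ (the common starting vertex of the equivalent paths; if the paths agree only after a cyclic permutation, apply a shift to one parameter so that the starting vertices coincide). As in the discussion following Lemma~\ref{lem-comp}, each $\gamma_k$ induces an ordered partition $0=c_0^{(k)}<c_1^{(k)}<\dots<c_{j}^{(k)}=L_k$ of its period into intervals $I_i^{(k)}=(c_{i-1}^{(k)},c_i^{(k)})$, with $\sigma_{\gamma_k}$ restricted to each $I_i^{(k)}$ a homeomorphism onto the edge $S_{w_i}$, where $w_1w_2\cdots w_j$ is the word induced by $\gamma_k$. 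Since the two paths are equivalent (and, after the cyclic shift above, literally equal as words), we have $j$ the same for both and, for every $i$, $\gamma_1$ and $\gamma_2$ traverse the same edge $S_{w_i}$ on the $i$-th step.

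Next I would build $\rho$ piecewise. On each interval $I_i^{(2)}$ define $\rho|_{I_i^{(2)}} = \big(\sigma_{\gamma_1}|_{I_i^{(1)}}\big)^{-1}\circ\sigma_{\gamma_2}|_{I_i^{(2)}}$, which is a homeomorphism from $I_i^{(2)}$ onto $I_i^{(1)}$ because both restricted signature maps are homeomorphisms onto the common edge $S_{w_i}$ (using Lemma~\ref{lem-comp} and the orientation argument of Remark~\ref{rem-sig} exactly as in the proof of Proposition~\ref{prop-mlt}). At the breakpoints $c_i^{(2)}$, set $\rho(c_i^{(2)})=c_i^{(1)}$; continuity of $\rho$ at these points follows because the adjacent edges meet at a common signature-quiver-vertex $q\in S$ and the one-sided limits of $\sigma_{\gamma_2}$ from $I_i^{(2)}$ and $I_{i+1}^{(2)}$ both equal $\sigma_{\gamma_2}(c_i^{(2)})=q$, while on the $\gamma_1$-side the corresponding preimage of $q$ along each adjacent edge is exactly $c_i^{(1)}$. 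Having defined $\rho$ on $[0,L_2]$ as an orientation-preserving homeomorphism onto $[0,L_1]$ carrying breakpoints to breakpoints, I would extend it to all of $\R$ by $\rho(s+L_2)=\rho(s)+L_1$; this is consistent because $\sigma_{\gamma_2}$ has period $L_2$ and $\sigma_{\gamma_1}$ has period $L_1$, and by construction $\sigma_{\gamma_2}=\sigma_{\gamma_1}\circ\rho$ holds on $[0,L_2]$ hence on all of $\R$. Then Theorem~\ref{thm-cong-non-deg} yields $\Gamma_1\cong\Gamma_2$.

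For the converse, assume $\Gamma_1$ and $\Gamma_2$ are simple and congruent. Then there is $g\in SE(2)$ with $\Gamma_2=g\cdot\Gamma_1$, and by Proposition~\ref{prop-cong} applied to arc-length parameterizations we get a shift $c$ with $\gamma_2(s)=g\circ\gamma_1(s+c)$, so $\sigma_{\gamma_2}(s)=\sigma_{\gamma_1}(s+c)$. The shift $\rho(s)=s+c$ is a homeomorphism of $\R$ carrying $\sigma_{\gamma_2}^{-1}(Q)$ to $\sigma_{\gamma_1}^{-1}(Q)$, hence it carries the breakpoint set of $\gamma_2$ to that of $\gamma_1$ and induces the identification of the edge-words; invoking once more that for simple curves the induced path is well defined up to equivalence (the remark after Lemma~\ref{lem-comp}), the paths induced by $\gamma_1$ and by $\gamma_2$ after this shift are equivalent. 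The main obstacle I anticipate is the bookkeeping at the breakpoints: verifying that ``equivalent paths'' really forces the $i$-th breakpoint of $\gamma_2$ to be matched with the $i$-th breakpoint of $\gamma_1$ and that the piecewise-defined $\rho$ is genuinely continuous there — this requires care because a signature-quiver-vertex $q$ may be a self-intersection of $S$ of multiplicity higher than two, so several edges emanate from $q$ and one must check that consecutive edges in the word pin down the correct preimage of $q$ on each curve. Everything else is a routine assembly of Lemma~\ref{lem-comp}, Proposition~\ref{prop-loc}, and Theorem~\ref{thm-cong-non-deg}.
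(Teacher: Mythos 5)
Your proposal is correct and follows essentially the same route as the paper: both use Lemma~\ref{lem-comp} to cut the two periods into intervals mapped homeomorphically onto the edges of the common word, and both rely on the orientation of the phase portrait to match corresponding intervals and pin down behavior at the quiver vertices. The only difference is the gluing step --- you assemble the edge-wise homeomorphisms into one global $\rho$ and invoke Theorem~\ref{thm-cong-non-deg}, while the paper applies Proposition~\ref{prop-loc} edge-by-edge to obtain curvature shifts $d_r$ and chains $d_r=0$ across the breakpoints by continuity of $\kappa$ before invoking Proposition~\ref{prop-cong}; these are interchangeable, since Theorem~\ref{thm-cong-non-deg} is itself proved by the Proposition~\ref{prop-loc} argument.
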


\begin{proof}
  The second statement follows from Proposition~\ref{prop-cong}.  
  For the  first statement, we can equivalently assume that there exist  unit speed parameterizations $\gamma_1$ and $\gamma_2$ of  $\Gamma_1$ and $\Gamma_2$  that induce the same  path $S_{i_1},\dots, S_{i_k}$ on the signature quiver  $\Delta_S$. 
Here $S_{i_j}$ denotes both the edge of the quiver and the corresponding open segment of the curve $S$. 
Then  $\sigma_{\gamma_1}(0)=\sigma_{\gamma_2}(0)$ corresponds to the starting {quiver-}vertex of $S_{i_1}$. Let $L_1$ and $L_2$ be the minimal periods of $\gamma_1$ and $\gamma_2$ respectively, and $Q$ be the set of self intersection points of $S$. As in the proof of Lemma~\ref{lem-comp}, we let $0=c_{1;0}< c_{1;1}<\dots<c_{1;k}=L_1$ be the ordered finite set of points
 $[0,L_1] \cap \sigma_{\gamma_1}^{-1}(Q)$ and similarly let, $0=c_{2;0}< c_{2;1}<\dots<c_{2;k}=L_2$ be the ordered finite set of points
 $[0,L_2] \cap \sigma_{\gamma_1}^{-1}(Q)$. 

 Let $I_{1;r}=(c_{1;{r-1}}, c_{1;r})$ and $I_{2;r}=(c_{2;{r-1}}, c_{2;r})$ for $r=1,\dots,k$. Then $S_{i_r}=\sigma_{\gamma_1}(I_{1;r})=\sigma_{\gamma_2}(I_{2;r})$, and by Lemma~\ref{lem-comp}, the maps   $\sigma_{\gamma_1}|_{I_{1;r}}$ and  $\sigma_{\gamma_2}|_{I_{2;r}}$ are homeomorphisms onto $S_{i_r}$. From  Proposition~\ref{prop-loc}, for each $r=1,\dots,k$, the curve segments $\Gamma_{1;r}=\gamma_1\left(I_{1;r}\right)$ and $\Gamma_{2;r}=\gamma_1\left(I_{2;r}\right)$ are congruent.
Proposition~\ref{prop-cong} implies that the corresponding curvature functions are related by a shift: for each $r=1,\dots,k$ there exists a constant $d_r\in \R$, such that $\kappa_2|_{I_{2;r}}(s)= \kappa_1|_{I_{1;r}}(s+d_r)$.

Since $\sigma_{\gamma_1}(0)=\sigma_{\gamma_2}(0)$, we have $\kappa_2(0)=\kappa_1(0)$ and so $d_1=0$. It follows that $c_{2;1}=c_{1;1}$ and can be denoted $c_1$. By continuity of the curvature functions it follows that $\kappa_2(c_1)=\kappa_1(c_1)$, so $d_2=0$ and continuing inductively we get that $L_1=L_2$ and $\kappa_1|_{[0,L]}=\kappa_2|_{[0,L]}$.
 Since $L$ is a (possibly non-minimal) period of both $\kappa$'s we conclude that $\kappa_1(s)=\kappa_2(s)$ for all $s\in\R$. Then by Proposition~\ref{prop-cong}, $\Gamma_1\cong\Gamma_2$.
\end{proof}

Our next goal is to show that any path $W$ (compatible or not) on the signature quiver
$\Delta_{S_\Gamma}$ of a curve $\Gamma$ gives rise to a non-degenerate curve $\Gamma_W$ whose curvature $ \kappa_W$ is a $C^1$-smooth function obtained by a \emph{piece-wise reshuffling} of the curvature of $\Gamma$,
as we define rigorously below.

\begin{corollary}\label{cor-from-path-to-curve}
Any path $W$ (compatible or not) on the signature quiver
$\Delta_{S_\Gamma}$ of the curve $\Gamma$ induces a non-degenerate curve  $\Gamma_W$ that is unique up to actions of $SE(2)$.
\end{corollary}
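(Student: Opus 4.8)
The plan is to turn a path $W=S_{i_1}\dots S_{i_k}$ on $\Delta_{S_\Gamma}$ into a curvature function by concatenating arc-length pieces of $\kappa$, one piece for each edge in the word, in the order prescribed by $W$, and then invoking Lemma~\ref{lemma-cong} to produce the curve $\Gamma_W$. First I would fix an arc-length parameterization $\gamma$ of $\Gamma$ with minimal period $L$, shift so that $\sigma_\gamma(0)$ is a self-intersection point of $S_\Gamma$, and extract the ordered cut points $0=c_0<c_1<\dots<c_N$ in $[0,L]$ mapping to $Q$ (as in Lemma~\ref{lem-comp} and the discussion following Definition~\ref{def-path}); for each edge $S_i$ of the quiver this singles out a canonical subinterval $[a_i,b_i]\subset[0,L]$ with $\sigma_\gamma|_{(a_i,b_i)}$ a homeomorphism onto $S_i$, and I set $\lambda_i=b_i-a_i$. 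For the word $W=S_{i_1}\dots S_{i_k}$ define the breakpoints $t_0=0$, $t_r=t_{r-1}+\lambda_{i_r}$ for $r=1,\dots,k$, and then set, on each interval $[t_{r-1},t_r]$,
\[
\kappa_W(s)=\kappa\bigl(s-t_{r-1}+a_{i_r}\bigr),
\]
extended periodically with period $\ell_W=t_k=\sum_{r}\lambda_{i_r}=\sum_i\mu_i\lambda_i$, where $\mu_i$ is the multiplicity of edge $S_i$ in $W$.

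The key steps after that are: (i) check $\kappa_W$ is $C^1$, which is where the hypothesis that $W$ is a \emph{path} is used — at each breakpoint $t_r$ the left end of $S_{i_r}$ and the right end of $S_{i_{r+1}}$ meet at a common quiver-vertex $q\in S_\Gamma$, so the one-sided limits of $(\kappa_W,\kappa_W')$ from both sides equal the coordinates $(\kappa,\dot\kappa)$ of the point of $S_\Gamma$ at that vertex; since $\kappa_W'$ is (up to sign conventions on a phase portrait, cf. Remark~\ref{rem-sig}) determined by the position on $S_\Gamma$, the one-sided derivatives agree, giving $C^1$-matching at every breakpoint and hence on all of $\R$ by periodicity. (ii) Check $\Gamma_W$ is non-degenerate: the vertices of $\Gamma_W$ correspond to zeros of $\kappa_W'$, and since on each piece $\kappa_W'$ is a translate of $\dot\kappa$ on a subinterval of the non-degenerate curve $\Gamma$ it has only isolated zeros there, while only finitely many breakpoints are added, so the zero set of $\kappa_W'$ is discrete. (iii) Check that $\Gamma_W$ is closed when needed / that it is a curve in $\mathfrak G$: apply Lemma~\ref{lemma-cong} to get a unique (up to $SE(2)$) $C^3$ unit-speed curve with curvature $\kappa_W$; closedness, if $W$ is a closed path, follows from $\int_0^{\ell_W}\kappa_W(s)\,ds=\sum_i\mu_i\int_{a_i}^{b_i}\dot\eta=\oint_{S_\Gamma}$ being a rational multiple of $2\pi$ together with Lemma~\ref{lem-closed}, though for the bare statement of the corollary only existence and $SE(2)$-uniqueness of the curve are required.

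Uniqueness up to $SE(2)$ is then immediate: the construction above pins down $\kappa_W$ as a function on $\R$ (the only freedom was an overall shift of the parameter, which does not change the congruence class), and Lemma~\ref{lemma-cong} says a curvature function determines its curve uniquely up to the choice of initial point and initial tangent direction, i.e. up to $SE(2)$; alternatively one can quote Proposition~\ref{prop-cong}.

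The main obstacle I expect is step (i): making the $C^1$-gluing argument fully rigorous requires being careful that the value of $\dot\kappa$ at a self-intersection point $q$ of $S_\Gamma$ is genuinely well defined and common to all edges incident to $q$ — i.e. that $q=(\kappa(q),\dot\kappa(q))$ as a point of $\R^2$ really does force the one-sided derivatives $\kappa_W'(t_r^-)$ and $\kappa_W'(t_r^+)$ to coincide. This is exactly the phase-portrait property of signatures (Remark~\ref{rem-sig}): the second coordinate of a point on $S_\Gamma$ \emph{is} the arc-length derivative of the first, so once the endpoints agree the derivatives agree, with the one subtlety that one must work on the closed subintervals $[a_i,b_i]$ so that the relevant one-sided limits of $\dot\kappa$ exist (finite) — which they do because $\kappa\in C^3$ on the compact period of $\Gamma$. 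Handling the finitely many breakpoints uniformly, and noting that a cyclic permutation of $W$ only shifts the parameter and hence leaves the $SE(2)$-class unchanged (so the construction descends to equivalence classes of closed paths, consistent with Theorem~\ref{thm-cong-graph}), completes the argument.
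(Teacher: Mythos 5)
Your construction is essentially identical to the paper's: both select, for each edge occurrence in $W$, a representative interval $(a_r,b_r)$ in $\sigma_\gamma^{-1}(S_{i_r})$, concatenate the corresponding restrictions of $\kappa$, use the fact that consecutive endpoints map to the same self-intersection point of $S_\Gamma$ (hence matching $(\kappa,\dot\kappa)$ values, the phase-portrait property) to get a $C^1$ curvature function $\kappa_W$, and then recover $\Gamma_W$ via Lemma~\ref{lemma-cong}. The only cosmetic difference is that the paper cites Theorem~\ref{thm-cong-graph} for the $SE(2)$-uniqueness while you argue it directly from the shift-invariance of the choices together with Lemma~\ref{lemma-cong}; both are valid.
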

\begin{proof}
  Assume {$\{S_{1},\dots, S_{N}\}$} is the set of edges  of $\Delta_{S_\Gamma}$ and the word (path)  $W$ is an ordered sequence of edges $S_{i_1},\dots, S_{i_k}$, where the edges may repeat.  Let $q_{i_{r-1}}$ and $q_{i_r}$ be the beginning and the ending quiver-vertices of edge $S_{i_r}$ respectively. An edge $S_{i_r}$ corresponds to an open curve segment of the signature $S_\Gamma$ whose end points are points of self intersection of the signature.
Let $\gamma$ again be a unit speed parameterization of $\Gamma$ and $\kappa$ be the corresponding curvature function.
As we discussed before, for each $r$, $\sigma^{-1}_\gamma(S_{i_r})$ is a union of disjoint intervals, such that the restrictions of $\kappa$ to any two of these intervals are related  by a shift in the parameter. For each $r=1,\dots, k$, we choose one of such intervals $I_r=(a_r,b_r)$. By construction we have that $\sigma_\gamma(b_r)=\sigma_\gamma(a_{r+1})=q_{i_r}$ for $r=1,\dots,k-1$ and since $\sigma_\gamma(s)=(\kappa(s),\dot{\kappa}(s))$, we have 
\beq\label{eq-cont}\kappa(b_r)=\kappa(a_{r+1}) \text{ and } \dot{\kappa}(b_r)=\dot{\kappa}(a_{r+1}).\eeq
In addition, if $W$ is  a closed path, we have 
\beq\label{eq-cont-c}\kappa(b_{k})=\kappa(a_{1}) \text{ and } \dot{\kappa}(b_k)=\dot{\kappa}(a_{1}).\eeq 
We  construct ${\kappa}_W$ by reshuffling the restrictions $\kappa|_{I_r}$ as follows. Let
$$c_0=0, \quad c_1=b_1-a_1, \dots, c_k=\ds{ \sum_{j=1}^{k}(b_{j}-a_{j})}.$$
We define a function $\kappa_W$ on the interval $[c_0,c_k]$, by 

\begin{align}
  \label{eq-kappaW} \kappa_W(c_k)=&\kappa(b_k) \text{ and } \kappa_W(s)=\kappa(s+a_r-c_{r-1}),\\\nonumber &\text{ for }r=1,\dots, k \text{ and } s\in [c_{r-1},c_r).
\end{align}

It follows from (\ref{eq-kappaW}) that
\begin{align}
  \nonumber \lim_{s\to {{c_r}^-}}\kappa_W(s)=\kappa({b_r}) &\text { and } 
\lim_{s\to {{c_r}^-}}\dot{\kappa}_W(s)=\dot{\kappa}({b_r}),\\
 & \text{ for } r=1,\dots,k \\
 \nonumber \lim_{s\to {{c_r}^+}}\kappa_W(s)=\kappa(a_{r+1})  &\text{ and }
 \lim_{s\to {{c_r}^+}}\dot{\kappa}_W(s)=\dot{\kappa}(a_{r+1}),\\
 & \text{  for } r=0,\dots,k-1.
 \end{align}
 From (\ref{eq-cont}) it follows that $\kappa_W$ is a $C^1$-smooth function on $[c_0,c_k]$. Moreover, if  $W$ is a closed path, then due to (\ref{eq-cont-c}) $\kappa_W$ can be periodically extended to a $C^1$-smooth function on $\R$.
 Using $\kappa_W$ in (\ref{eq-gamma}) we obtain a unit speed  curve $\Gamma_W$ of length $L_W = c_k - c_0$. If $\kappa_W$ is extended to $\R$, we can similarly extend the parameterization of $\Gamma_W$ to obtain an open or closed curve. By Theorem~\ref{thm-cong-graph} this curve is unique under actions of $SE(2)$.
\end{proof}


\begin{remark} In the context of the Corollary~\ref{cor-from-path-to-curve}, rather than starting with a quiver associated with the signature of a specific curve $\Gamma$, we could  start with a periodic $C^1$-smooth function $\kappa(s)$, with finitely many critical points per period, and consider various paths on the quiver associated   with the phase portrait ($\kappa(s),\dot\kappa(s)$). The resulting curves is non-degenerate, but as discussed in Remark~\ref{rem-closed-phase}, we are not guaranteed to be able to generate any closed or simple curves. Note also that once we have chosen $\kappa(s)$, then by Lemma~\ref{lemma-cong},  there is a unique, up-to $SE(2)$-action, curve $\Gamma$  whose curvature is $\kappa(s)$, and so starting with a phase portrait is essentially equivalent to starting with a curve.
\end{remark}

\begin{remark}\label{rem-reconstr}
In the context of the above construction, if the path $W$ is complete (i.e.~includes each edge of $\Delta_{S_\Gamma}$) the signature of curve (curve piece) $\Gamma_W$ equals $S_\Gamma$.   
However, the following examples show, that although we start with a closed simple curve $\Gamma$, and even if we take $W$ to be compatible with the word generated by $\Gamma$, the curve $\Gamma_W$ obtained by the above construction does not have to be simple, or closed. In addition $\Gamma_W$,  might not have a consistent orientation even at its simple points.
\end{remark}
\begin{example}
In Section~\ref{sect-smooth}, four closed simple curves with the signature shown  in Figure~\ref{fig:CinfSig} have been constructed, and the corresponding signature-quiver is shown   in Figure~\ref{fig:directed-graph}. A compatible non-periodic path $cadbcdabcdabcadbcadbcdab$ gives rise to \emph{an open curve piece} shown in Figure \ref{fig:Cinf_open} with the same signature.
\begin{figure}
  \centering
  \includegraphics[width=6cm]{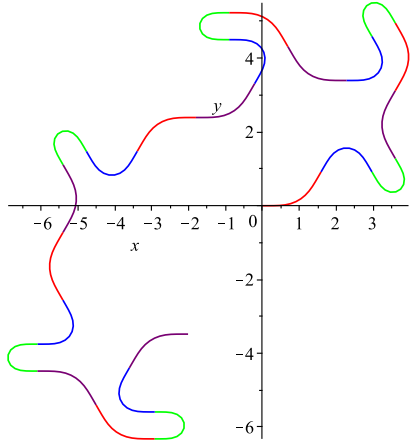}
  \caption{An open curve with the signature curve in Figure \ref{fig:CinfSig}, induced by the non-periodic compatible closed path $cadbcdabcdabcadbcadbcdab$ on the quiver in Figure~\ref{fig:directed-graph}.}  \label{fig:Cinf_open} 
\end{figure}
A compatible path $(cabdcdabcabd)^2$ gives rise to a non-simple closed curve shown in  Figure \ref{fig:Cinf_notSimple}.
\begin{figure}
  \centering
  \includegraphics[width=6cm]{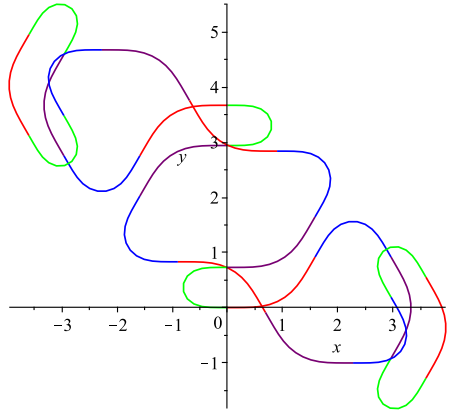}
  \caption{A non-simple curve with the signature curve in Figure \ref{fig:CinfSig} induced by the word $(cabdcdabcabd)^2$.}
  \label{fig:Cinf_notSimple}
\end{figure}
Figure~\ref{fig:Cinf_multi-oriented} shows a curve generated by a non-compatible path $ccbbadbbbcccccbcadbcbccc$ on the same quiver. A regular parametrization consistent with this path assigns some simple points a double orientation. These points are on the ``bridges'' connecting the ``roundabouts'' and are highlighted  in light blue. When traveled from left to right,  the bridges are congruent to the purple segments labeled by $b$ and,  when travelled in the opposite direction, they turn into the red segments labeled by $c$.  It is worth noticing that  this curve does not admit a consistently oriented regular parameterization.
\begin{figure}
  \centering
  \includegraphics[width=6cm]{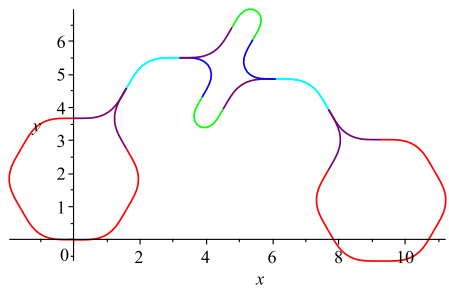}
  \caption{A non-simple curve, with signature curve in Figure \ref{fig:CinfSig}, is  induced by  the word $(ccbbadbbbcccccbcadbcbccc)$ on the signature quiver in Figure~\ref{fig:directed-graph}.  The curve has a double orientation on the points highlighted in light blue.}
  \label{fig:Cinf_multi-oriented}
\end{figure}
\end{example}
 To formulate a sufficient condition for  $\Gamma_W$  to be  closed,  we will find it useful to assign a \emph{weight} to each edge of the signature quiver $\Delta_{S_\Gamma}$ as follows (compare with Section 4 of ~\cite{olver15}). 
As was pointed out in the proof of Proposition~\ref{prop-mlt},  for every $i=1,\dots,N$ and each $j=1,\dots,\mlt_i$, the restrictions of $\gamma$ to intervals $I_i^1$ and $I_i^j$ satisfy the hypothesis of Proposition~\ref{prop-loc}, and so the corresponding curve segments $\Gamma_i^1=\gamma(I_i^1)$ and $\Gamma_i^j=\gamma(I_i^j)$ are congruent.
Then by Proposition~\ref{prop-cong}, the corresponding curvatures are related by a shift: there exists a constant $c_{i}^j$, such that $\kappa|_{I_i^j}(s + c_i^j)=\kappa|_{I_i^1}(s)$.
Hence for each $j=1,\dots, \mlt_i$, we have $\ds\int_{I_i^j}\kappa(s)ds=\int_{I_i^1}\kappa(s)ds$, and so with each $S_i$ we can associate \emph{weight}
 \beq\label{eq-weight}\omega_i = \ds{\int_{I_i^1} \kappa(s)ds}.\eeq

From the definitions of multiplicity and weight, it follows that 
\beq\label{eq-wmu}\ds{\sum_{i=1}^N} \mlt_i\omega_i = \ds{\int_0^{L_W}\kappa(s)ds}\eeq
where $L_W$ is the length of the reconstructed curve $\Gamma_W$ as described in the proof of Corollary~\ref{cor-from-path-to-curve}.
 \begin{remark}
   The notion of weights here differs from that in the weighted signatures introduced in \cite{olver15}. In the case of curves in  \cite{olver15}, a weight  is assigned to every subset of the signature  and it is equal to the total arclength of its preimage under the signature map. Then the  weights of the points not on horizontal axis  are zero, while, in the case of degenerate curves,  the weight of a point   $(\kappa_0,0)$  equals the total length of all generalized vertices with constant curvature $\kappa_0$ and may be non-zero. In future work, we will explore whether  incorporating these  ``atomistic'' weights into the  signature quiver construction allows us to extend the results of this paper to degenerate curves.
 \end{remark}
\begin{proposition}[Closed-curves]\label{prop-closed}
Let $\Gamma$  be a closed, non-degenerate, curve with turning number $\xi$.
Let $\Delta_{S_\Gamma}$ be the signature quiver of $\Gamma$ with edges $S_i$  equipped with multiplicities $\mu_i$ and  weights $\omega_i$, defined by (\ref{eq-weight}), $i=1,\dots,N$.
Let $W = (w)^m$ be a closed path (compatible or not) along $\Delta_{S_\Gamma}$ such that $\omega$ is not periodic, $m > 1$, and $m$ and $\xi$ are relatively prime. A path $W$ assigns  the multiplicity
   $\tilde{\mlt}_i$ to an edge $S_i$, equal to the number of times the  edge appears in the path (if  the edge $S_i$ is not included in the path, then $\tilde{\mlt_i}=0$).
   Let ${\Gamma_W}$ be the curve with the curvature function $\kappa_W$ defined by (\ref{eq-kappaW}). If

\beq \label{eq-closed-cond}\sum_{i=1}^N\tilde{\mlt}_i\omega_i=2\pi\xi,\eeq
 then  ${\Gamma_W}$ is a closed curve with turning number equal to $\xi$, symmetry index equal to $m$, and signature index equal
to $\ds{\min_{i=1,\dots,N}}\tilde{\mlt}_i$.
 
 \end{proposition}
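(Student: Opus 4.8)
\textbf{Proof proposal for Proposition~\ref{prop-closed}.}

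The plan is to reduce everything to Lemma~\ref{lem-closed} and Proposition~\ref{prop-sym-graph}, with the main work being the verification of the hypotheses of Lemma~\ref{lem-closed} for the curvature function $\kappa_W$. First I would recall from Corollary~\ref{cor-from-path-to-curve} that $\kappa_W$ is the $C^1$-smooth periodic function on $\R$ obtained by the piece-wise reshuffling in~(\ref{eq-kappaW}), whose length-one-period is $L_W = c_k - c_0 = \sum_{r=1}^k (b_r - a_r)$; since the path is $W = (w)^m$, the reshuffled curvature is itself $m$-periodic: writing $\ell = L_W / m$, the restriction of $\kappa_W$ to each $[i\ell,(i+1)\ell]$, $i=0,\dots,m-1$, is the shift $\kappa_W|_{[0,\ell]}(s-i\ell)$, exactly as in~(\ref{eq-kappa_i}). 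The key integral identity is~(\ref{eq-wmu}) specialized to $W$: since the path $W$ traverses edge $S_i$ exactly $\tilde\mu_i$ times and each traversal contributes $\omega_i$ to the integral of $\kappa_W$ over one period $L_W$, we get $\int_0^{L_W}\kappa_W(s)\,ds = \sum_{i=1}^N \tilde\mu_i\,\omega_i$, which by hypothesis~(\ref{eq-closed-cond}) equals $2\pi\xi$. Hence $\int_0^{\ell}\kappa_W(s)\,ds = \frac{2\pi\xi}{m}$, i.e.~in the notation of Lemma~\ref{lem-closed}, $\frac{1}{2\pi}\int_0^\ell \kappa_W(s)\,ds = \frac{\xi}{m}$ with $\gcd(m,\xi)=1$ and $m>1$.

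Next I would address whether $\ell$ is the \emph{minimal} period of $\kappa_W$, which is needed to legitimately invoke Lemma~\ref{lem-closed}. This is precisely where the hypothesis ``$w$ is not periodic'' enters. If $\kappa_W$ had a period $\ell' < \ell$ dividing $\ell$, then the corresponding sub-block of the word $w$ reading off the sequence of edges traversed on $[0,\ell']$ would repeat to give $w$, contradicting non-periodicity of $w$ (here one uses that the sequence of edges is recoverable from $\kappa_W$ together with $\dot\kappa_W$, since the self-intersection points of $S_\Gamma$ are exactly the points where the path changes edges, and the ordered set $\sigma_{\gamma}^{-1}(Q)\cap[0,\ell]$ is finite). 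With $\ell$ established as the minimal period, Lemma~\ref{lem-closed} applies and yields that $\gamma_W$ given by~(\ref{eq-gamma}) with curvature $\kappa_W$ is a unit-speed parameterization of a closed curve $\Gamma_W$ of length $m\ell = L_W$. That the turning number over $[0,L_W]$ equals $\xi$ follows from Proposition~\ref{prop-integrals}(1): $\int_0^{L_W}\kappa_W\,ds = 2\pi\xi$.

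For the symmetry index, I would apply Proposition~\ref{prop-sym-graph}: since $\Gamma_W$ is non-degenerate (its curvature has finitely many critical points per period, inherited from $\kappa$) and the word it induces is $W = (w)^m$ with $w$ non-periodic, that proposition gives $\symi(\Gamma_W) = m$ — provided $\Gamma_W$ is simple, which is \emph{not} assumed here, so I would instead argue directly: by~(\ref{eq-kappa_i})-style periodicity $\kappa_W(s + \ell) = \kappa_W(s)$, the rotation $R$ through angle $\int_0^\ell \kappa_W = \frac{2\pi\xi}{m}$ about the appropriate center (Lemma 4 of~\cite{Musso2009}) is a symmetry, generating a $\Z_m$ subgroup since $\gcd(m,\xi)=1$; and no larger rotational symmetry can occur because that would force a period of $\kappa_W$ strictly smaller than $\ell$, again contradicting non-periodicity of $w$. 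Finally, the signature index: since the path $W$ assigns multiplicity $\tilde\mu_i$ to edge $S_i$, the argument of Proposition~\ref{prop-mlt} (whose proof of~(\ref{eq-mlt-min}) uses only the multiplicities induced by the path and the continuity of $\sigma_{\gamma_W}$, not simplicity) gives $\text{sig-index}(\Gamma_W) = \min_{i=1,\dots,N}\tilde\mu_i$ among the edges actually appearing; edges with $\tilde\mu_i = 0$ are absent from $S_{\Gamma_W}$ and do not contribute. I would remark that the main obstacle is the minimality-of-period bookkeeping — ensuring that ``$w$ not periodic'' genuinely forces $\ell$ to be the minimal period of $\kappa_W$ and rules out extra symmetries — since both Lemma~\ref{lem-closed} and the symmetry count are sensitive to this, and the rest is a direct substitution into results already proved.
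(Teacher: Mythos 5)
Your proposal follows essentially the same route as the paper: establish $\int_0^{L_W}\kappa_W\,ds=\sum_i\tilde\mu_i\omega_i=2\pi\xi$ via (\ref{eq-wmu}) and (\ref{eq-closed-cond}), divide by $m$ using the block periodicity of $\kappa_W$, and invoke Lemma~\ref{lem-closed} for closedness and the turning number, with the index statements delegated to Propositions~\ref{prop-mlt} and~\ref{prop-sym-graph}. The paper's proof is terser and simply asserts that $\ell=L_W/m$ is the minimal period of $\kappa_W$ ``by construction,'' whereas you supply the argument that a shorter period would force $w$ to be a proper power; that is a genuine detail worth having. You also correctly observe that Propositions~\ref{prop-mlt} and~\ref{prop-sym-graph} are stated only for \emph{simple} curves while $\Gamma_W$ need not be simple (the paper itself exhibits non-simple $\Gamma_W$), and your direct rotation argument for the symmetry index addresses a looseness the paper's one-line citation leaves unexamined; the same caveat applies to your use of the proof of (\ref{eq-mlt-min}) for the signature index, which does invoke simplicity of $\gamma$ at one step, so that part of your argument inherits the same gap as the paper's rather than closing it.
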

 \begin{proof} The statements about indices follows from Propositions~\ref{prop-mlt} and \ref{prop-sym-graph}. To prove the closedness,   let $L_W$ be the minimal period of the unit speed parameterization $\gamma_W(s)$ associated with  $\kappa_W(s)$. Then $\ell=\frac{L_W}m$ is the minimal period of $\kappa_W$. By construction for $i=0,\dots, m-1$: 
\beq \label{eq-Ww}\kappa_W|_{[i\ell,(i+1)\ell]}(s)=\kappa_W|_{[0,\ell]}(s-i\ell). \eeq
and so, using (\ref{eq-wmu})   for the second to last equality and  (\ref{eq-closed-cond}) for the last equality:

\begin{align*}
  \int_0^\ell\kappa_W(s)ds&=\frac 1 m\int_0^{L_W}\kappa_W(s)ds\\ &=\frac 1 m \sum_{i=1}^N\tilde{\mlt}_i\omega_i=2\pi \frac \xi m.
\end{align*}
The closedness  and turning number results now follow from Lemma~\ref{lem-closed}.

\end{proof}
\begin{remark} In the context of Proposition~\ref{prop-closed}, if $W$ is a compatible word (i.e. $\tilde \mlt_i=\mlt_i$), then, since $\Gamma$ has turning number $\xi$, (\ref{eq-closed-cond}) is satisfied and so the conclusion of the proposition holds.
\end{remark}

We conclude this section with two collections of non-congruent non-degenerate closed curves with the same signature curve shown  in Figure~\ref{fig:CinfSig}, induced by various compatible and non-compatible paths on the signature quiver in Figure~\ref{fig:directed-graph}.
 \begin{example}
 In Figure~\ref{fig:Cinf-ex}, we show a collection of non-congruent non-degenerate closed curves with the same signature curve shown  in Figure~\ref{fig:CinfSig}, induced by various \emph{compatible} paths on the signature quiver in Figure~\ref{fig:directed-graph}. All curves in this example have signature index 6.
\end{example}

\begin{figure*}
  \centering
    \subfigure[The curve induced by the word $(cabd)^6$.]{
    \label{fig:Cinf_3}
    \includegraphics[width=5.5cm]{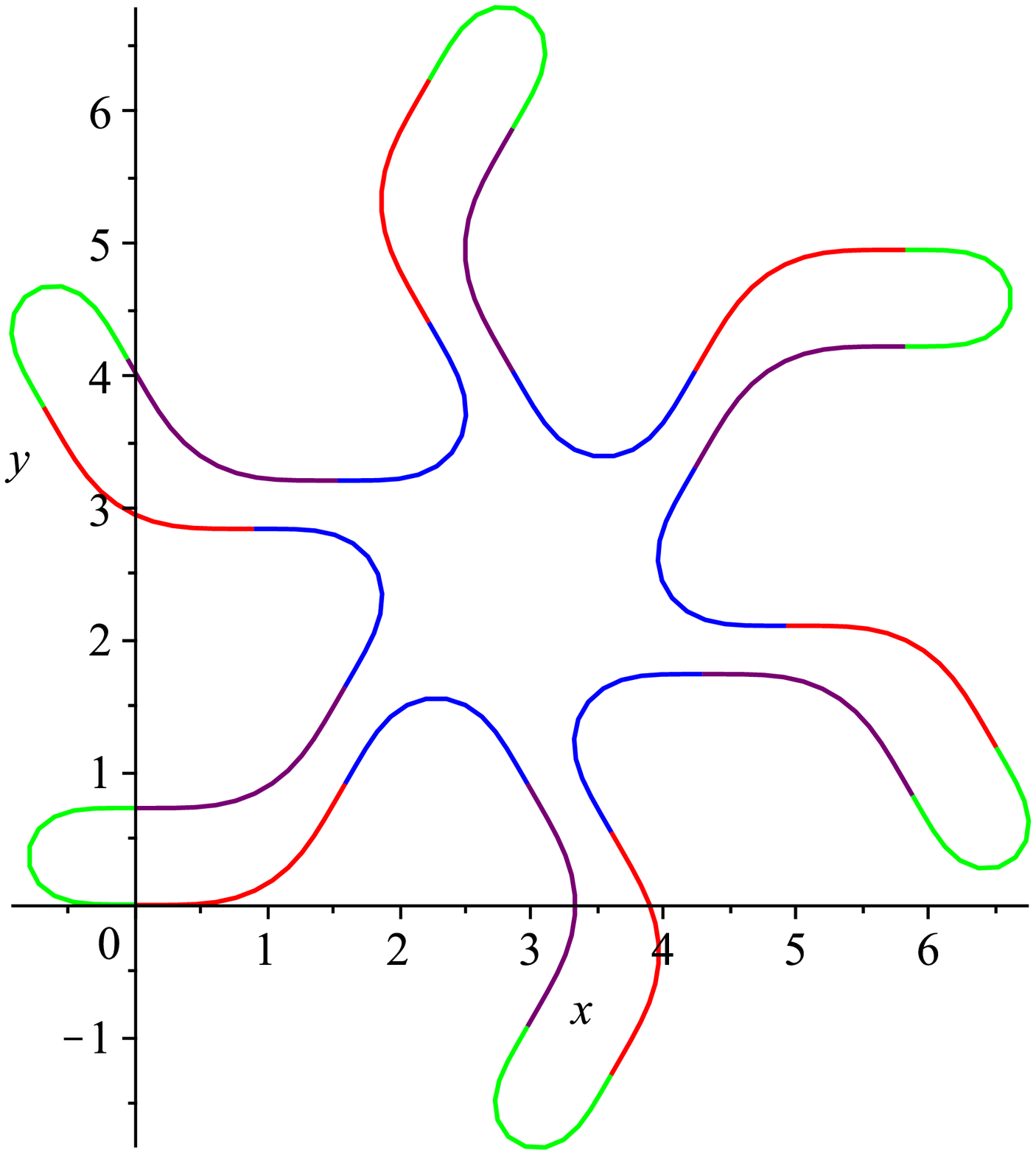}
    }
    \hspace{.5cm}
    \subfigure[The curve induced by the word $(cbdadbac)^3$.]{
    \label{fig:Cinf_sym3_2}
    \includegraphics[width=5.5cm]{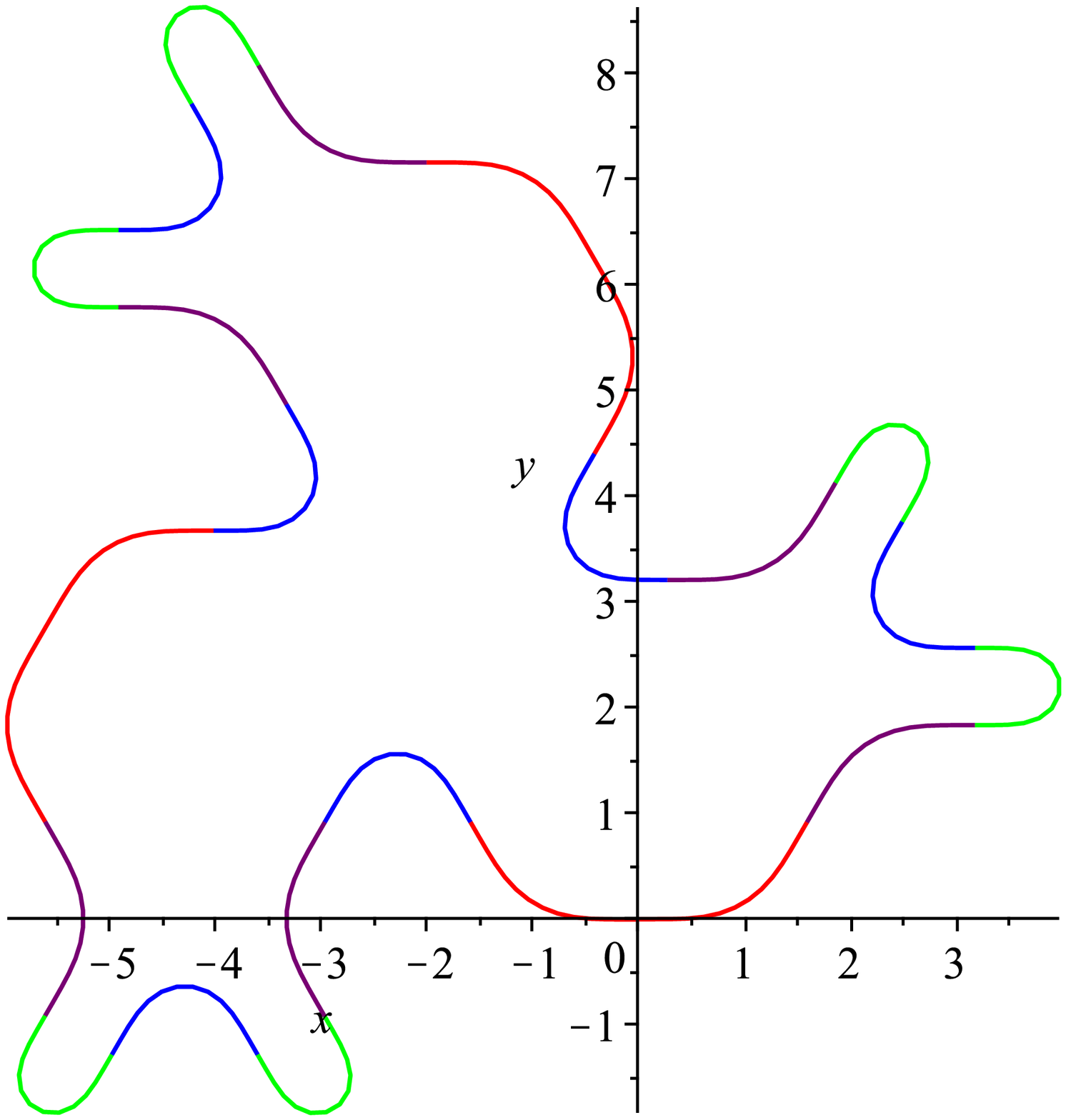}
    }
    \hspace{.5cm}
    \subfigure[The curve induced by the word $(cbdacbadcabd)^2$.]{
    \label{fig:Cinf_sym2_2}
    \includegraphics[width=5.5cm]{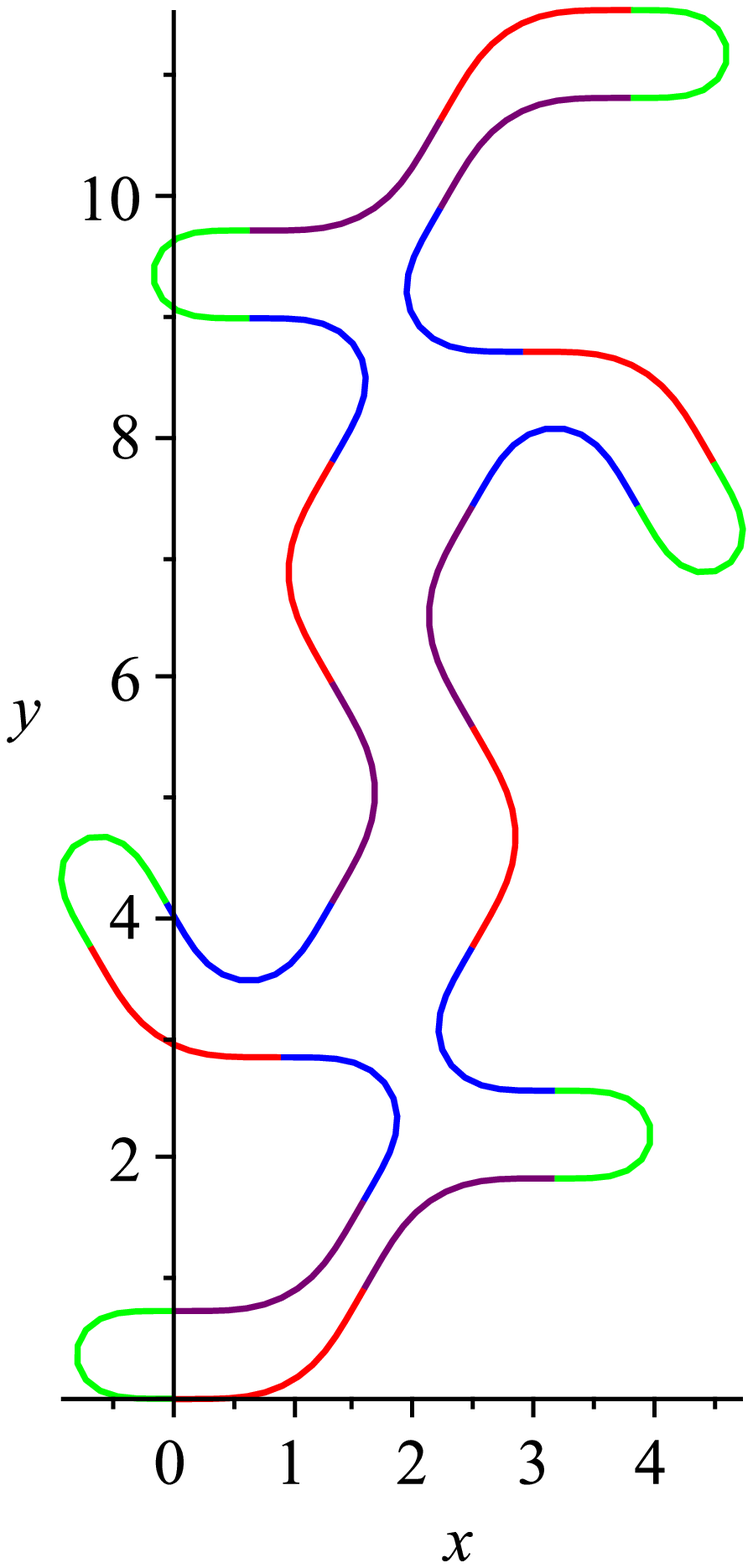}
    }
    \hspace{.5cm}
    \subfigure[The curve induced by the word $(cbdaadbc)^3$.]{
    \label{fig:Cinf_sym3_3}
    \includegraphics[width=5.5cm]{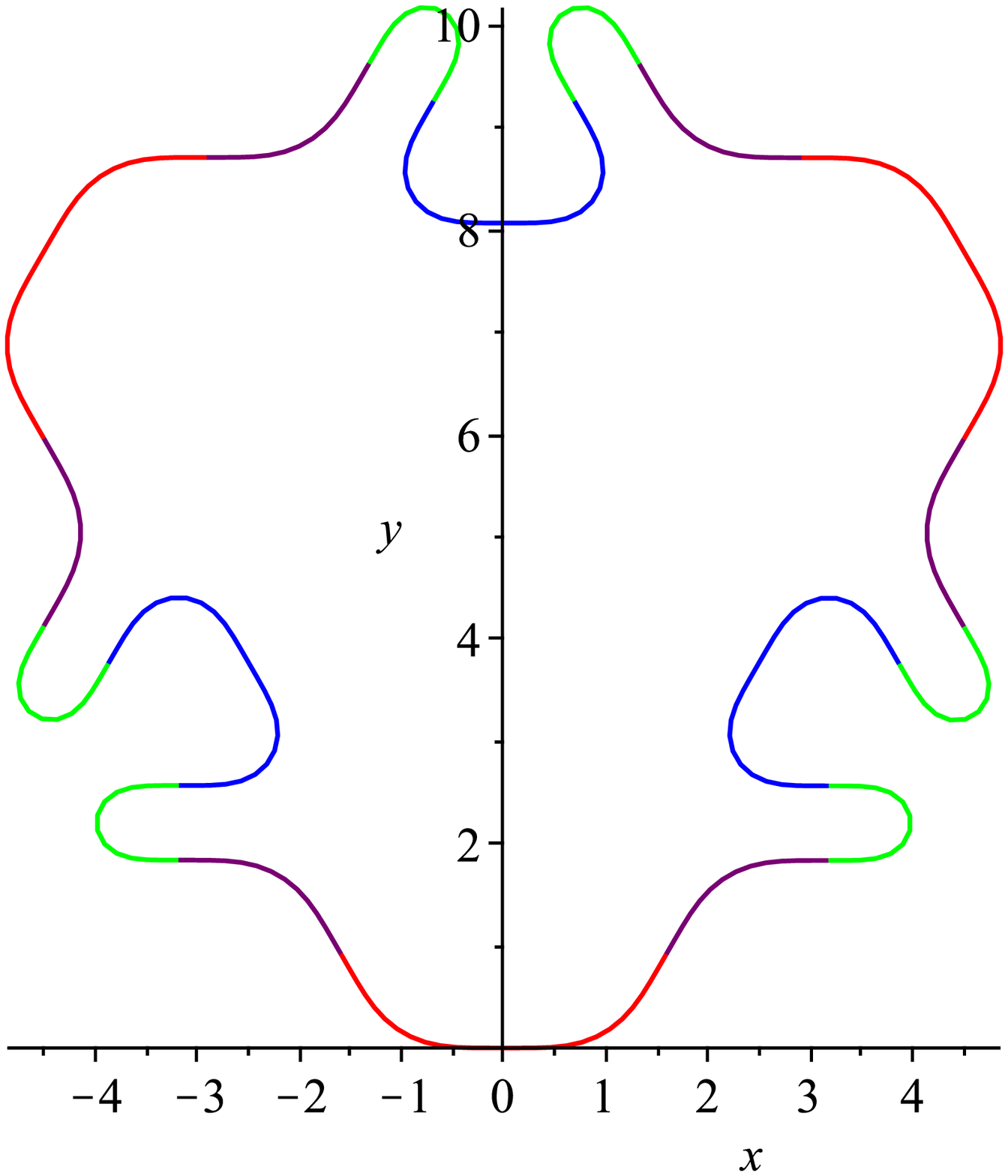}
    }
    \hspace{.5cm}
    \subfigure[The curve induced by the word $(cbdacdbacbda)^2$.]{
    \label{fig:Cinf_sym2_3}
    \includegraphics[width=5.5cm]{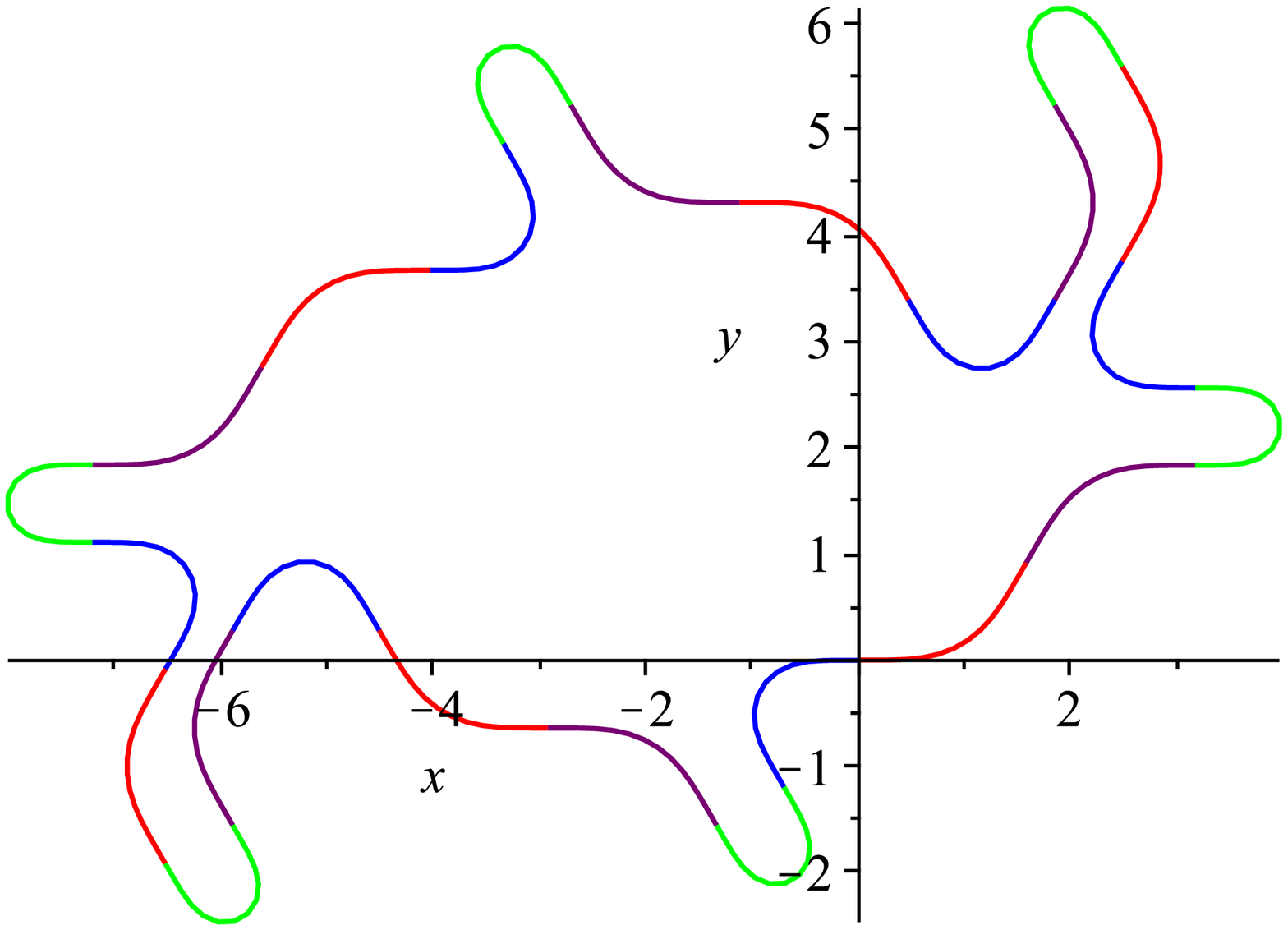}
    }
    \hspace{.5cm}
    \subfigure[The curve induced by the word $(cbddabac)^3$.
    {Note that the intersections lead to an ambiguity in parameterization and this curve is also induced by the word $(cccbddabac)^3$.}]{
    \label{fig:Cinf_notSimple_sym3_1}
    \includegraphics[width=5.5cm]{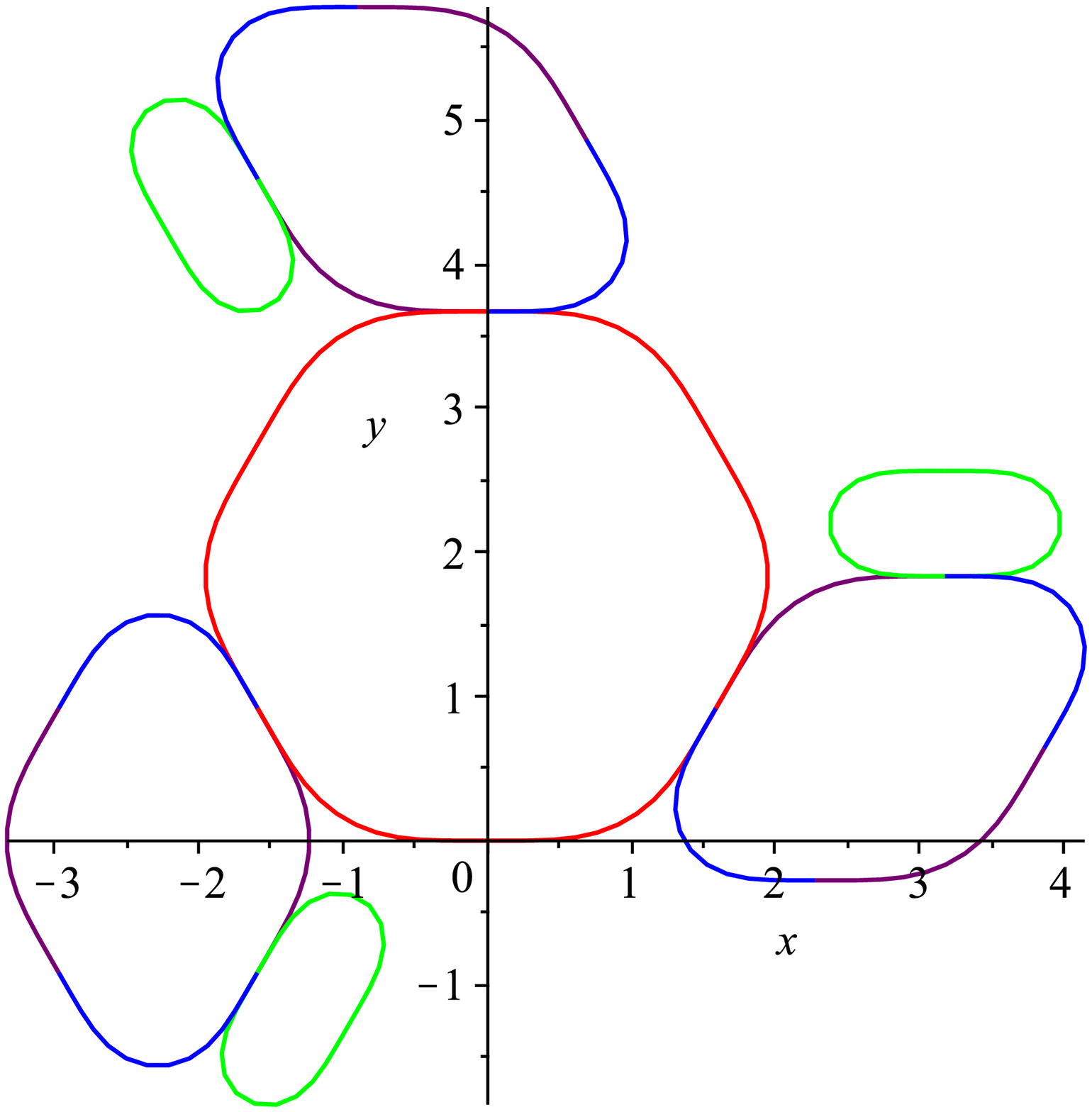}
    }
  \caption{Non-congruent curves with signature curve in Figure~\ref{fig:CinfSig} induced by compatible paths on the signature quiver in Figure~\ref{fig:directed-graph}.}
    \label{fig:Cinf-ex}
\end{figure*}


\begin{figure*}
  \centering
    \subfigure[The curve induced by the non-compatible word $(cbdacacccbda)^2$ {with signature index 4  and symmetry index 2}.]{
    \label{fig:Cinf_sym2_indexSwap_1}
    \includegraphics[width=6cm]{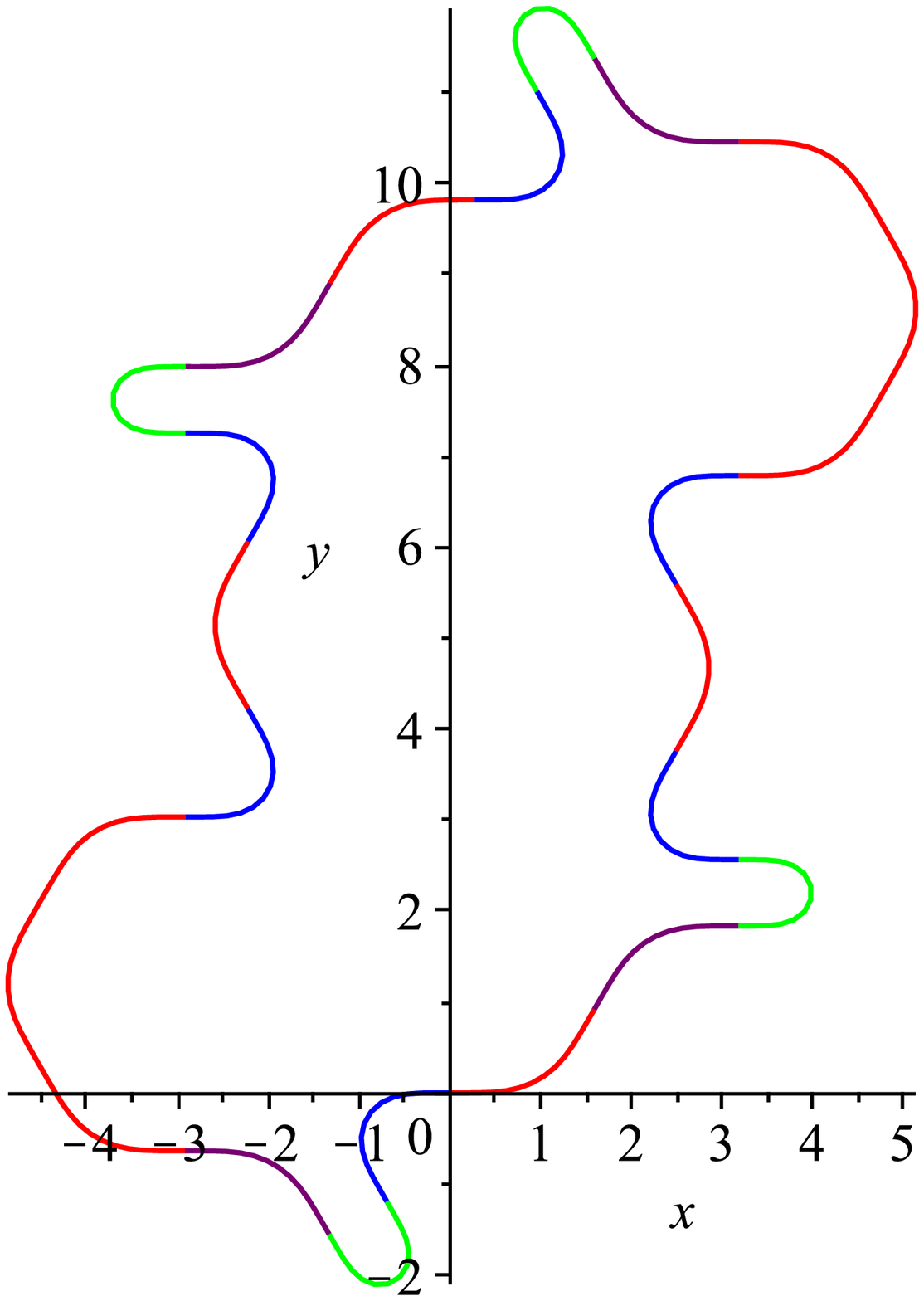}
    }
      \hspace{.5cm}
    \subfigure[The curve induced by the non-compatible word $(cbdacacc)^3$ {with signature index 3  and symmetry index 3}.]{
    \label{fig:Cinf_sym3_indexSwap_1}
    \includegraphics[width=6cm]{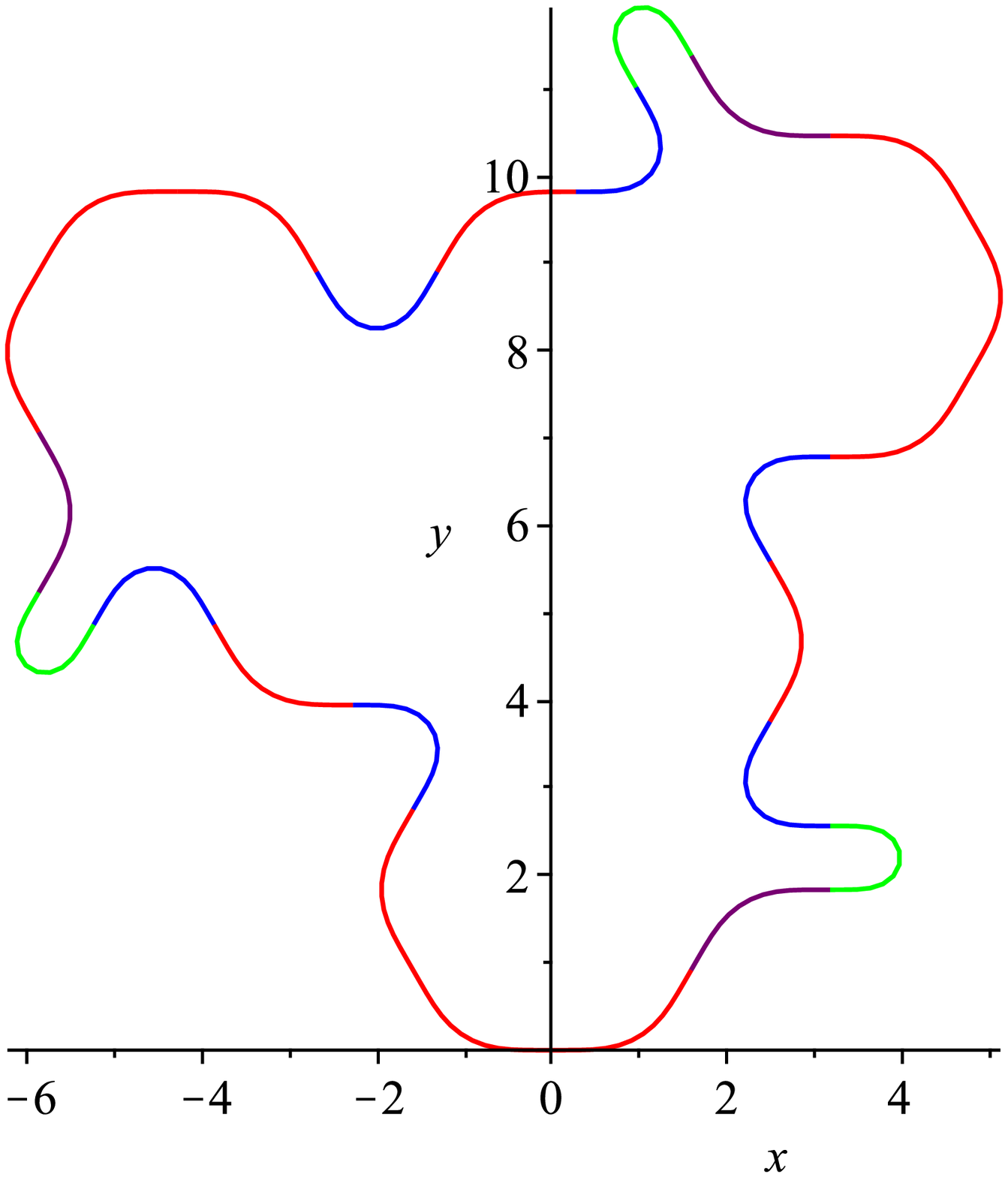}
    }
      \hspace{.5cm}
    \subfigure[The curve induced by the non-compatible word $(cacccbdacacc)^2$ {with signature index 2 and symmetry index 2}.]{
    \label{fig:Cinf_sym2_indexSwap_2}
    \includegraphics[width=6cm]{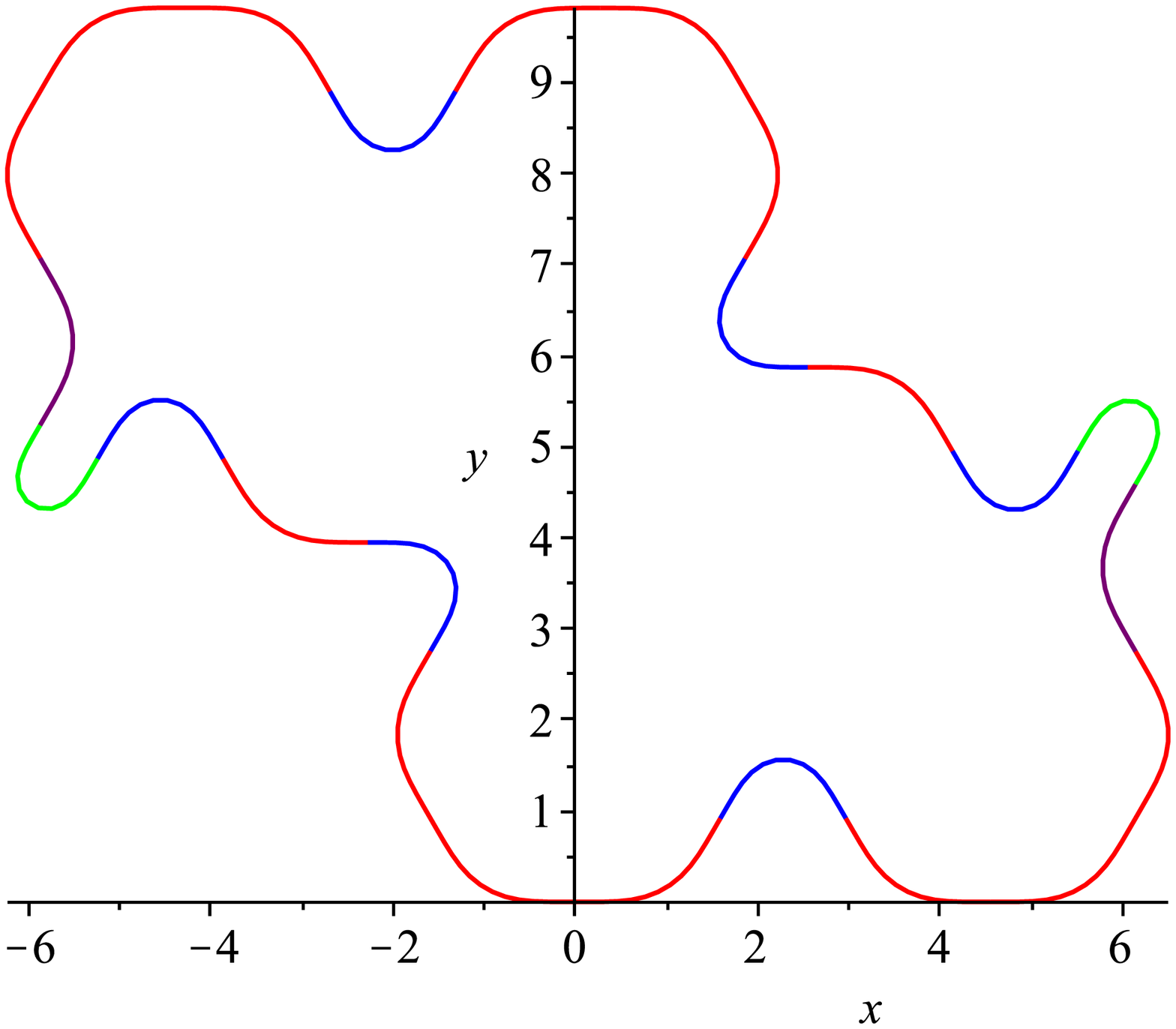}
    }
      \hspace{.5cm}
    \subfigure[The curve induced by the non-compatible word $(ccabdacc)^3$ {with signature index 3  and symmetry index 3}.]{
    \label{fig:Cinf_sym3_indexSwap_2}
    \includegraphics[width=6cm]{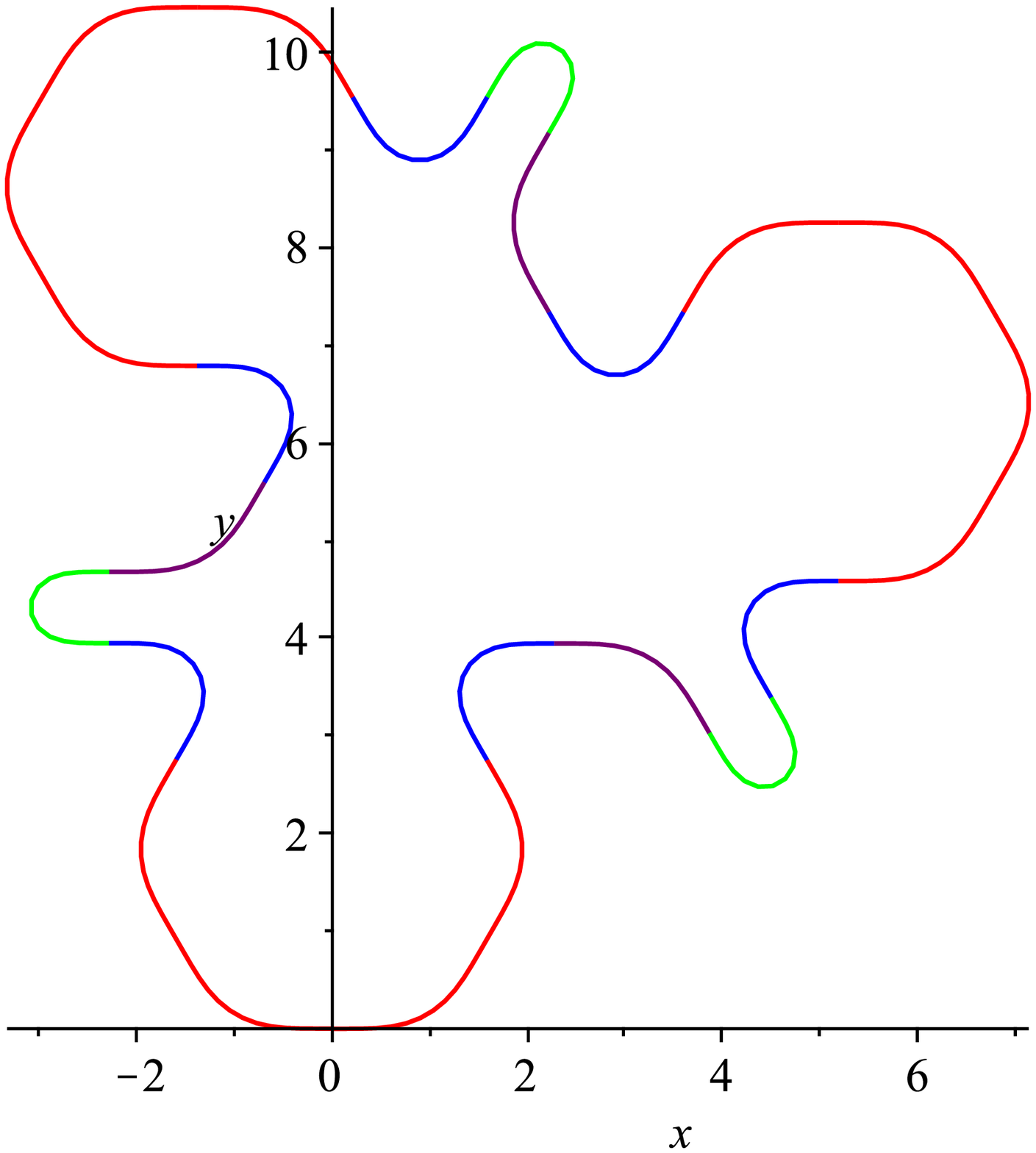}
    }
  \caption{Non-congruent curves with signature in Figure~\ref{fig:CinfSig} induced by non-compatible paths on the signature quiver in Figure~\ref{fig:directed-graph}.}
    \label{fig:Cinf-indexSwap_ex}
\end{figure*}

\begin{example}
 In Figure~\ref{fig:Cinf-indexSwap_ex}, we  show how carefully chosen \emph{non-compatible} paths along the quiver in Figure~\ref{fig:directed-graph} generate closed, non-degenerate, non-congruent curves with the signature pictured in Figure~\ref{fig:CinfSig}.
To this end, we consider a non-compatible closed periodic word $W = (w)^m$, $m>1$, such that each of the $N$ letters (edges) in the quiver appears at least once. Assume the $i$-th letter appears $\tilde{\mlt}_i$ times in $W$ and that the corresponding edge has weight $\omega_i$ (see (\ref{eq-wmu})).
If $W$ is chosen so that $\ds{\sum_{i=1}^{N}} \tilde{\mlt}_i\omega_i = 2\pi$ then by Proposition~\ref{prop-closed}, the curve $ \Gamma_W$ induced by $W$ is closed with the symmetry-index $m$.
Note that in this example, edge $a$ (blue) has weight $-\frac{2\pi}3$, edge $b$ (purple) has weight $-\frac\pi3$, edge $c$ (red) has weight $\frac\pi3$, and edge $d$ (green) has weight $\pi$.
\end{example}


\section{More examples}\label{sect-ex}


In this section, we revisit examples appearing in Sections 3 and 5 of \cite{Musso2009}. In Section~\ref{sect-ex-musso}, we use the example in Section 3 of \cite{Musso2009} and, considering various compatible paths along their signature quiver, construct \emph{non-degenerate} curves with identical signatures. In Section~\ref{sect-cog}, following general ideas in Section 5 of \cite{Musso2009}, we construct a different set of cogwheels, based on trigonometric functions.  In contrast with our previous examples, the cogwheels here have trivial global symmetry groups.
\subsection{Curves with different order of smoothness}\label{sect-ex-musso}

\begin{figure}
  \centering
  \subfigure[Curvature $\kappa_1$.]{
    \label{fig:k1}
    \includegraphics[width=6cm]{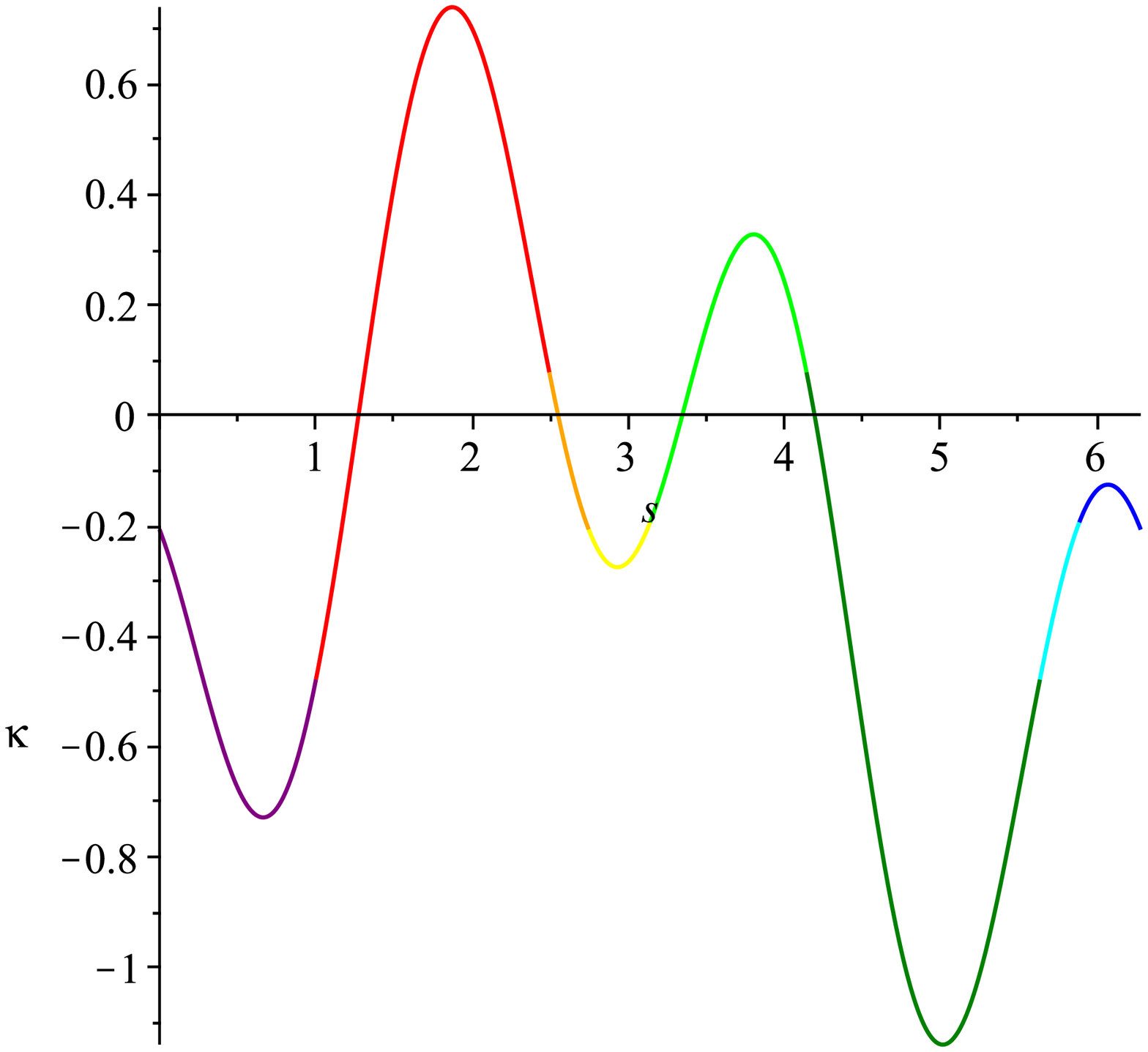}
  }
    \hspace{.1cm}
  \subfigure[Curve $\Gamma_1$, corresponding to parameterization $\gamma_1$.]{
    \label{fig:MNc1}
    \includegraphics[width=6cm]{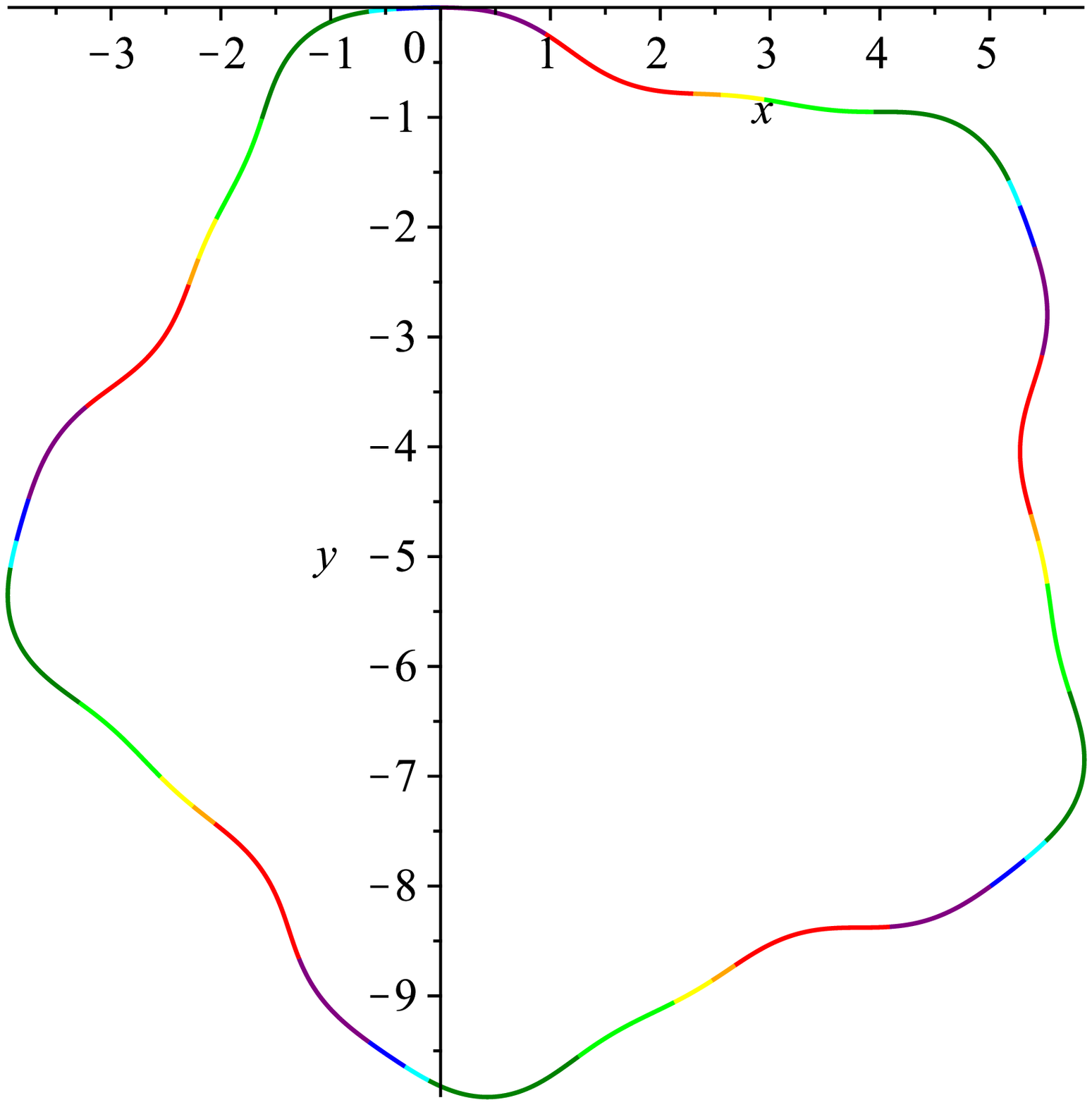}
  }
  \caption{Curvature function (\ref{eq:k1}) and a corresponding curve.}
\end{figure}

\begin{figure}
  \subfigure[The Euclidean Signature of $\Gamma_1$.]{
    \label{fig:MNSig}
    \includegraphics[width=6cm]{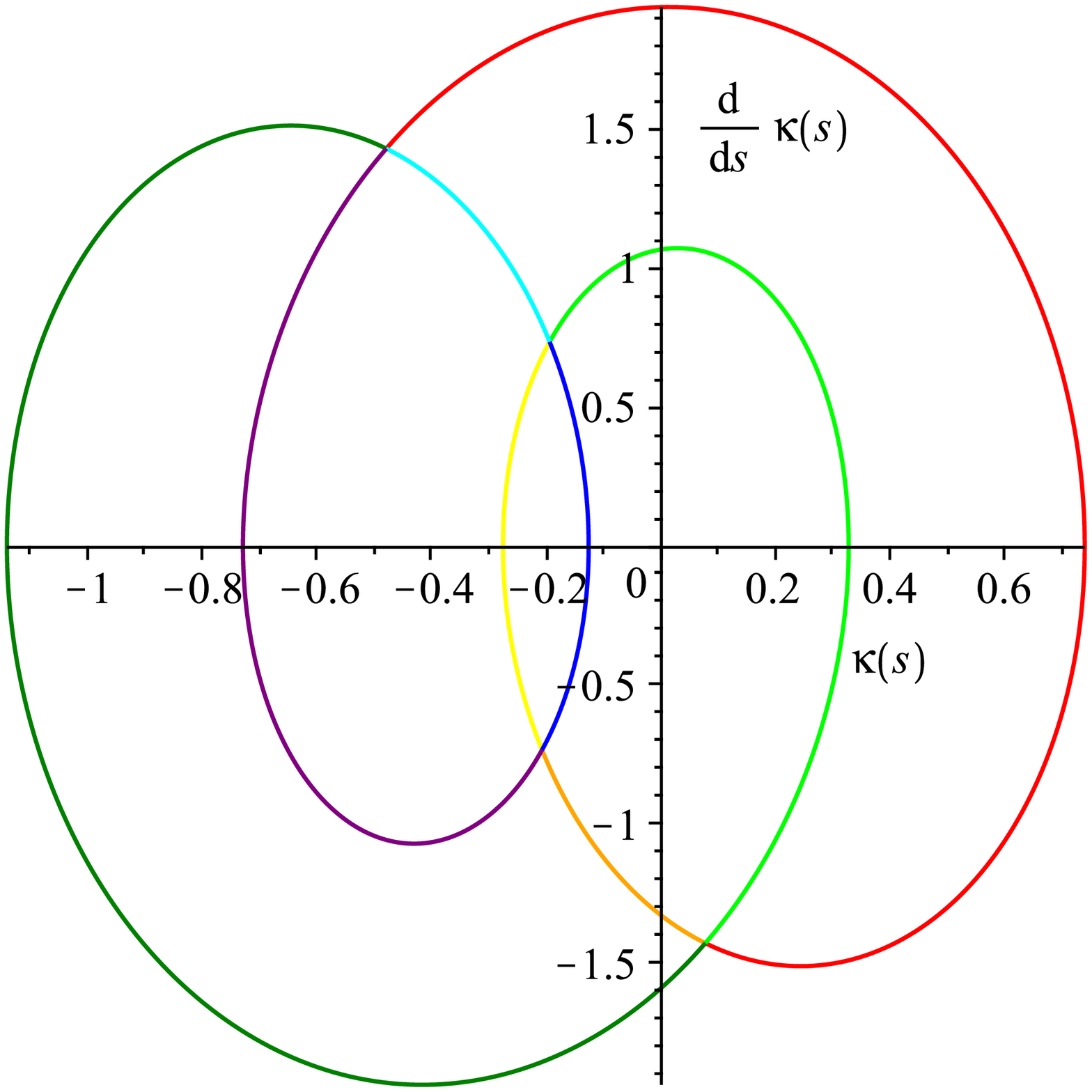}
  }
    \hspace{.1cm}
    \subfigure[The corresponding quiver. 
      The superscript on each of the labeled edges denotes the multiplicity of the paths induced by Figures~\ref{fig:MNc1}.]
{
\label{fig:MNgraph}
\begin{tikzpicture}[
            > = stealth, 
            shorten > = 1pt, 
            auto,
	    node distance = 2.3cm, 
            semithick 
        ]

        \tikzstyle{every state}=[
            draw = black,
            thick,
            fill = white,
            minimum size = 4mm
        ] 

	\node[shape=circle,draw=black] (1) {};
        \node[shape=circle,draw=black] (2) [below right of=1] {};
        \node[shape=circle,draw=black] (3) [below of=2] {};
        \node[shape=circle,draw=black] (4) [below right of=3] {};

	\path[->] (1) edge [bend left=70, distance=3.5cm, draw=red, thick] node {$h^5$} (4);
	\path[->] (4) edge [bend left=70, distance=3.5cm, draw=myGreen, thick] node {$a^5$} (1);
	\path[->] (2) edge [bend left=30, draw=green, thick] node {$g^5$} (4);
        \path[->] (3) edge [bend left=30, draw=violet, thick] node {$b^5$} (1);
        \path[->] (2) edge [bend left=30, draw=blue, thick] node {$e^5$} (3);
        \path[->] (3) edge [bend left=30, draw=yellow, thick] node {$d^5$} (2);
        \path[->] (1) edge [bend left=20, draw=myCyan, thick] node {$c^5$} (2);
        \path[->] (4) edge [bend left=20, draw=orange, thick] node {$f^5$} (3);

    \end{tikzpicture}
  }
  \caption{The signature and quiver for the curve in Figure~\ref{fig:MNc1}.} 
\end{figure}

Here we construct five non-congruent curves with identical signatures, lengths, signature index, and symmetry groups.
As in \cite{Musso2009}, only one of the curves is $C^\infty$-smooth  and, in fact, analytic, while the rest are only $C^3$-smooth. 
Note that the signature appearing in Section~\ref{sect-smooth} (see Figure~\ref{fig:CinfSig}) had a single point of self-intersection at the origin, and so the corresponding points on curves constructed in that section are curve-vertices (points where $\dot\kappa=0$). In contrast, the signature appearing in this section (see Figure~\ref{fig:MNSig}) has four self-intersection points all off the horizontal axis, and so these points do not correspond to the vertices of the curves.

Using Example 1 of \cite{Musso2009} on page 73 in Sect. 3.3,
we take the periodic function
\begin{equation}
  \label{eq:k1}
\kappa_1(s) = \frac{1}{2}(\sin(s)-\cos(3s)) - \frac{1}{5}
\end{equation}
to be the curvature function of $\gamma_1$
with minimal period $2\pi$
and examine the signature curve and signature quiver
in Figure \ref{fig:MNSig}.

\begin{figure*}
  \centering
  \subfigure[Curvature $\kappa_2$ induced by word $(bhacedgf)^5$.]{
    \label{fig:k2}
    \includegraphics[width=4cm]{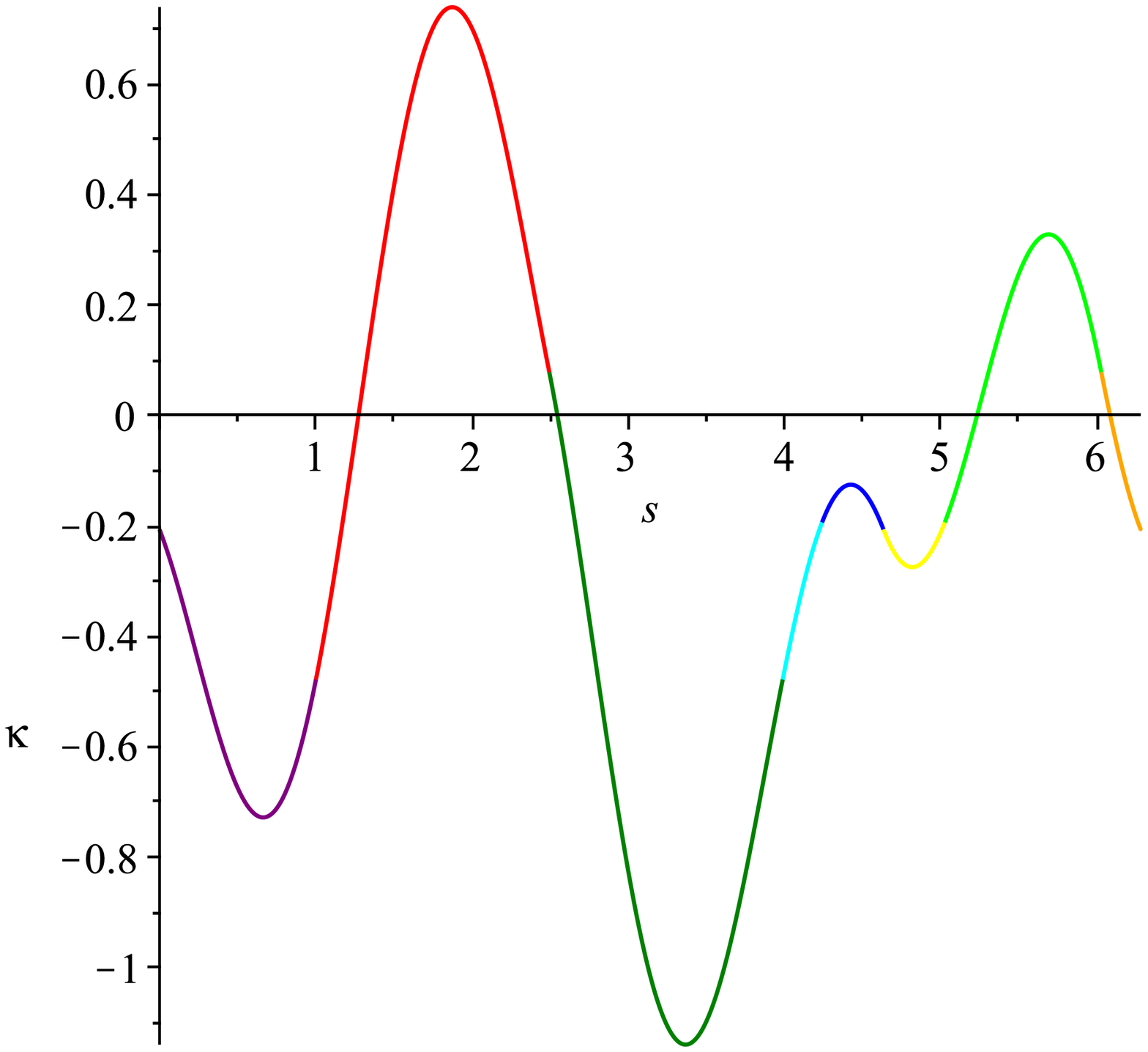}
  }
  \hspace{1cm}
  \subfigure[Curve $\Gamma_2$.]{
    \label{fig:MNc2}
    \includegraphics[width=4cm]{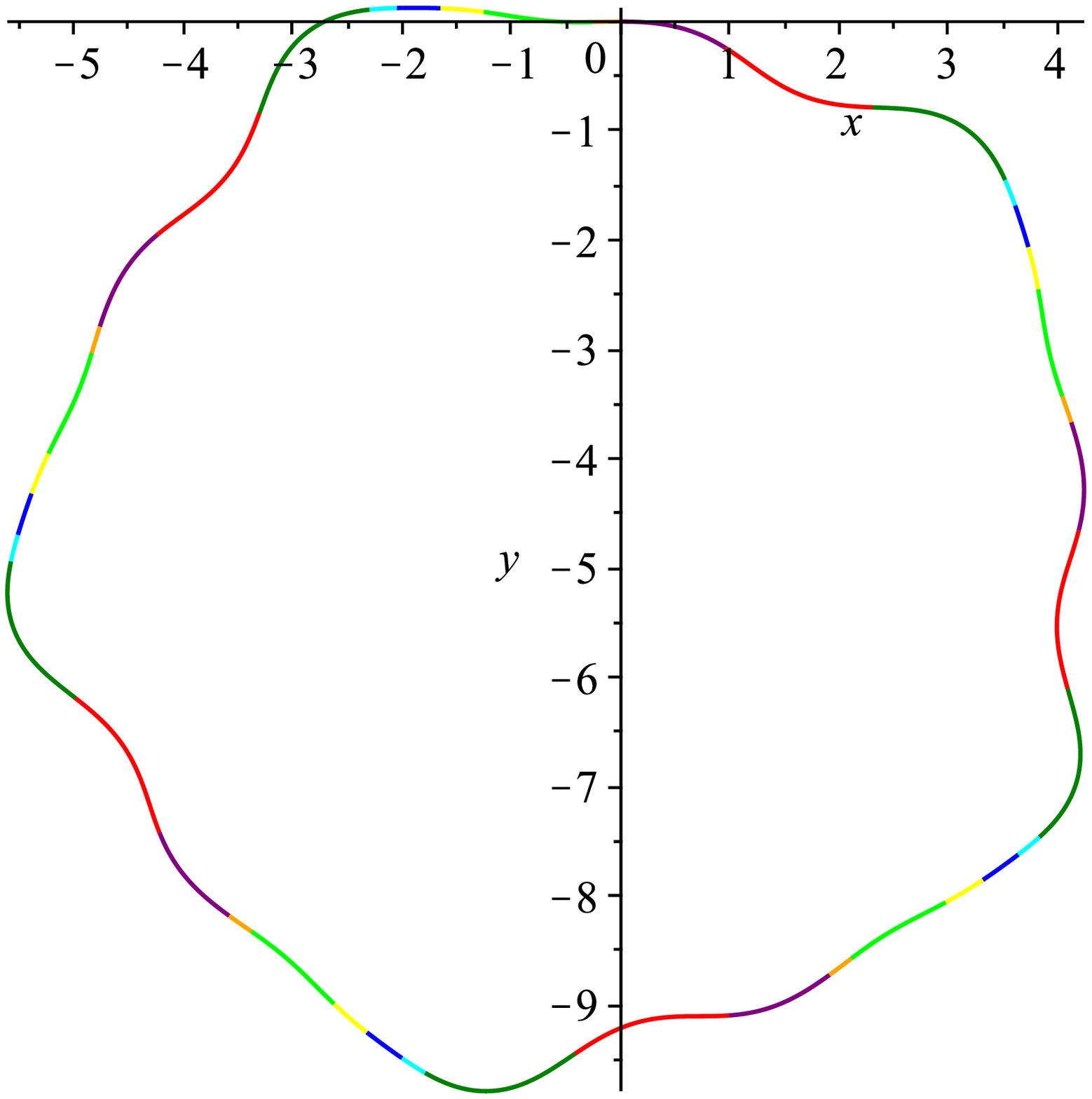}
  }\\
  \subfigure[Curvature $\kappa_3$ induced by word $(bhfcgfde)^5$.]{
    \label{fig:k3}
    \includegraphics[width=4cm]{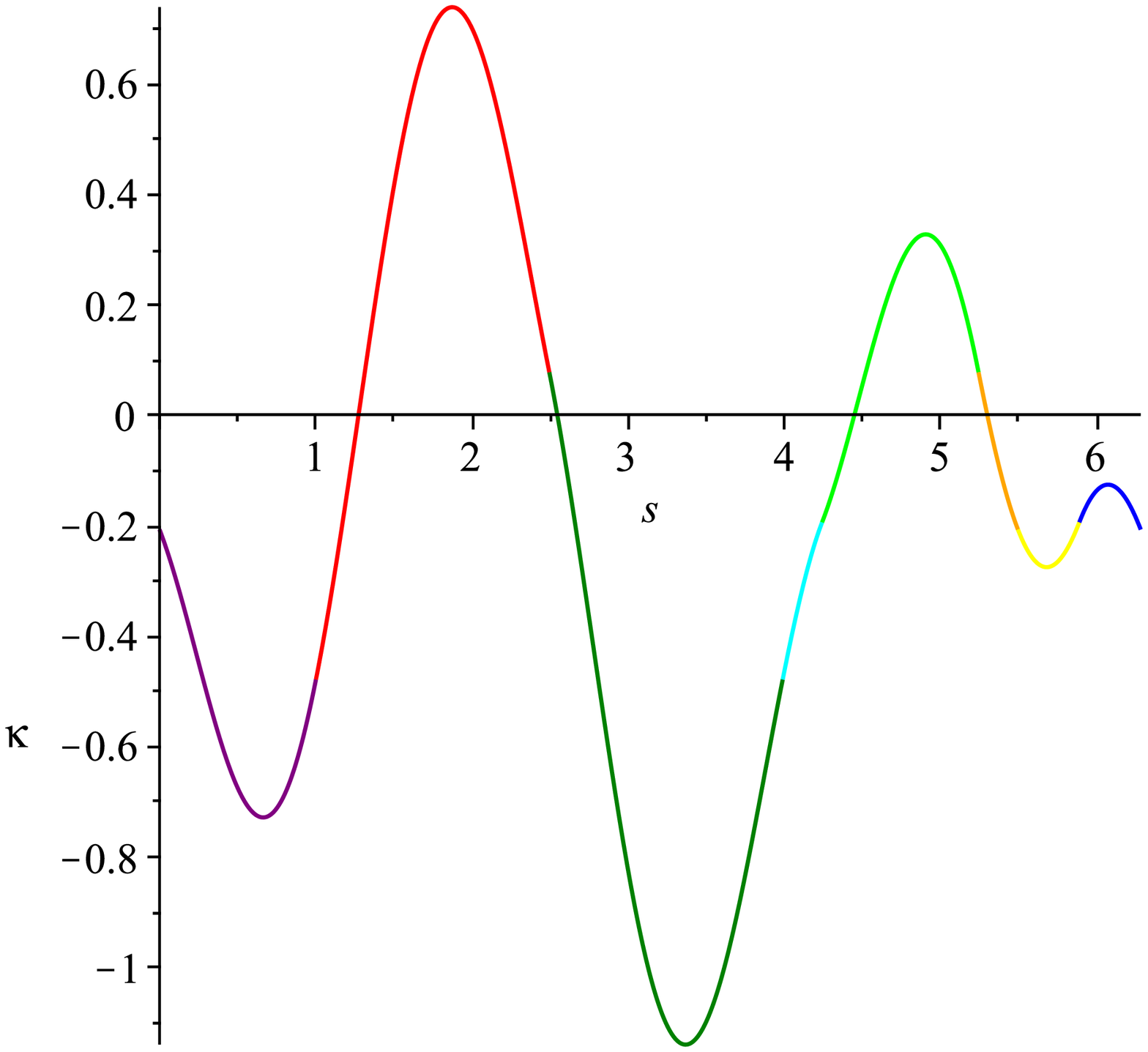}
  }
  \hspace{1cm}
  \subfigure[Curve $\Gamma_3$.]{
    \label{fig:MNc3}
    \includegraphics[width=4cm]{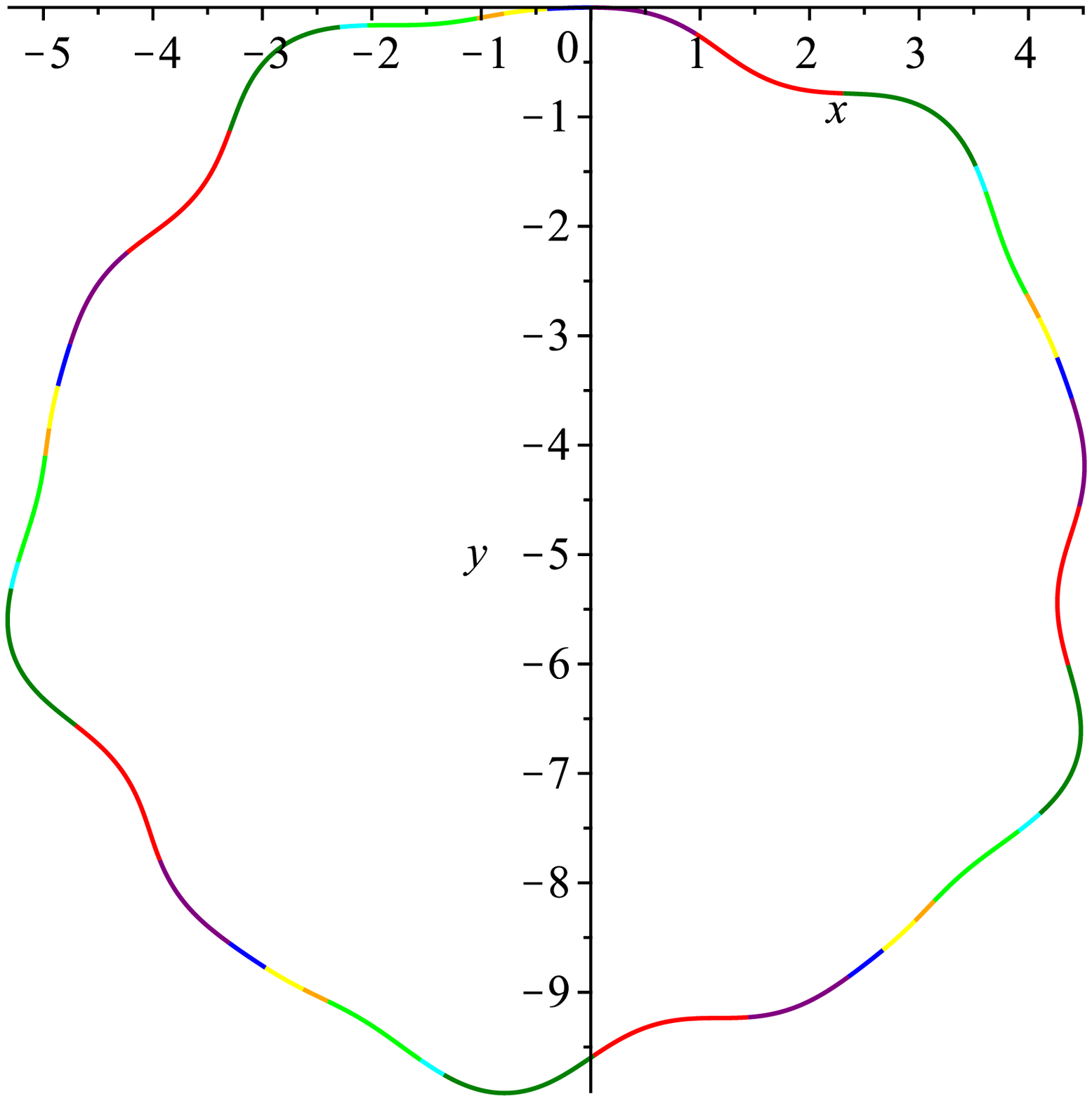}
  }\\
  \subfigure[Curvature $\kappa_4$ induced by word $(bcedgfahf)^5$.]{
    \label{fig:k4}
    \includegraphics[width=4cm]{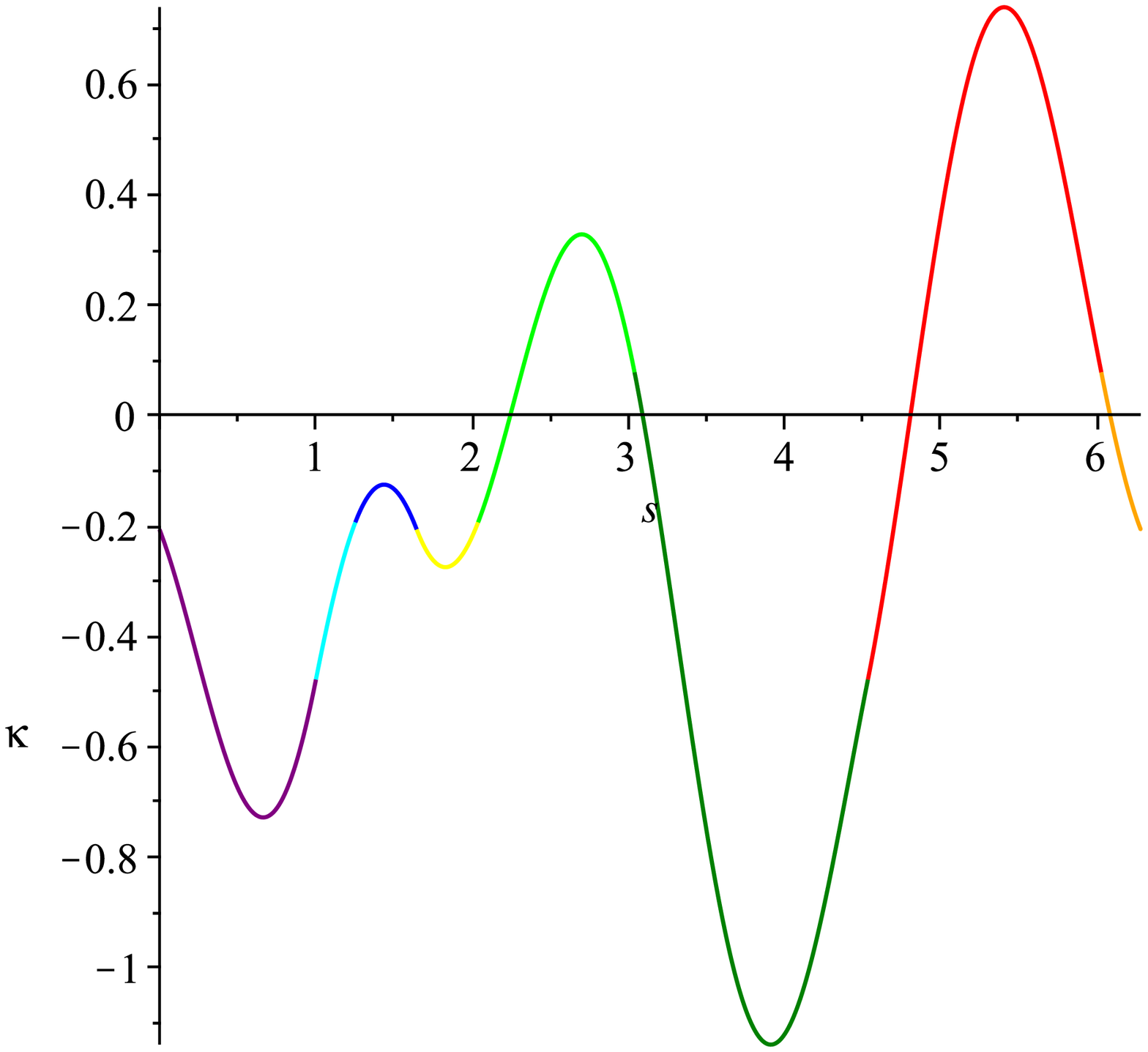}
  }
  \hspace{1cm}
  \subfigure[Curve $\Gamma_4$.]{
    \label{fig:MNc4}
    \includegraphics[width=4cm]{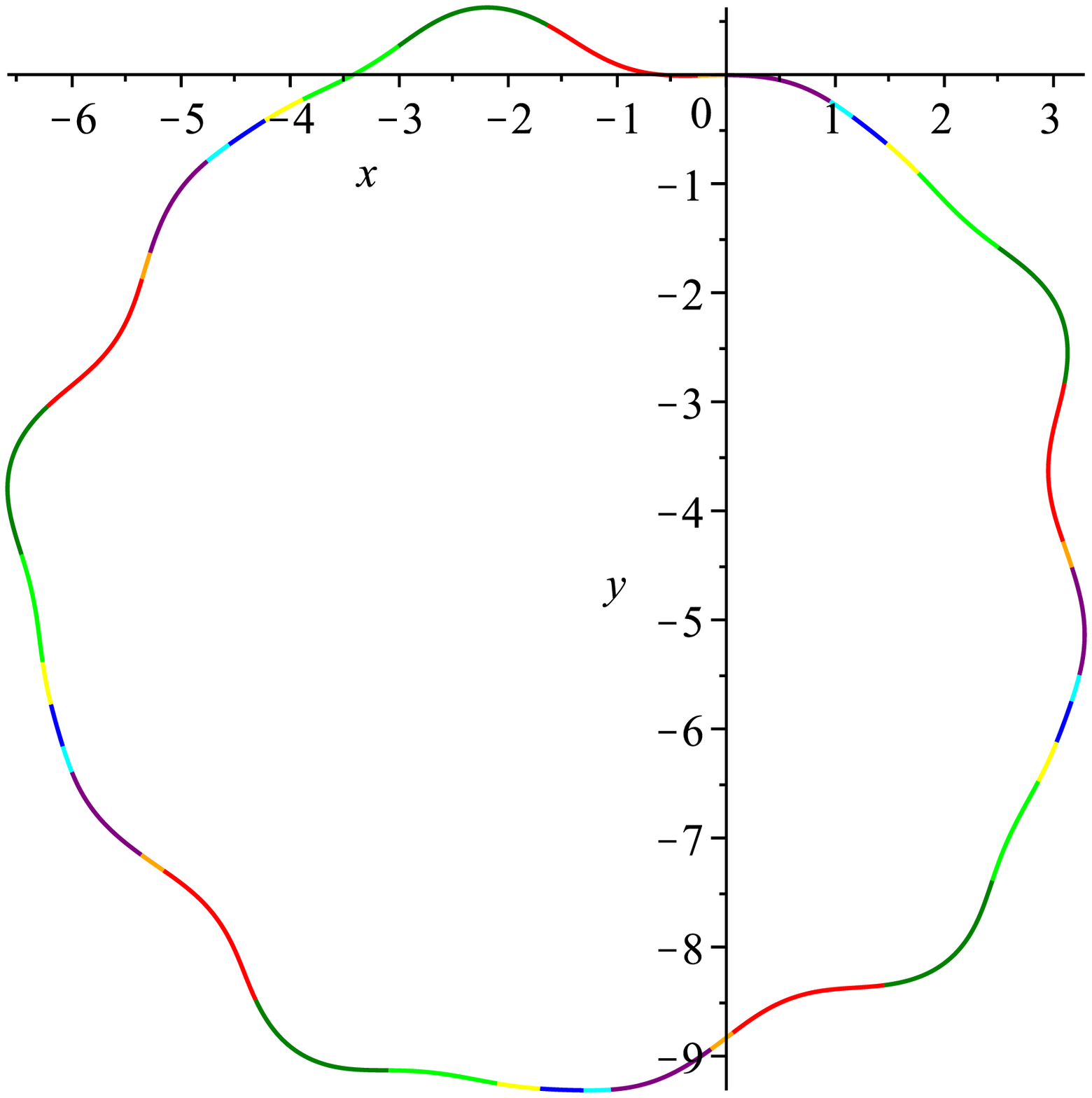}
  }\\
  \subfigure[Curvature $\kappa_5$ induced by word $(bcgahfde)^5$.]{
    \label{fig:k5}
    \includegraphics[width=4cm]{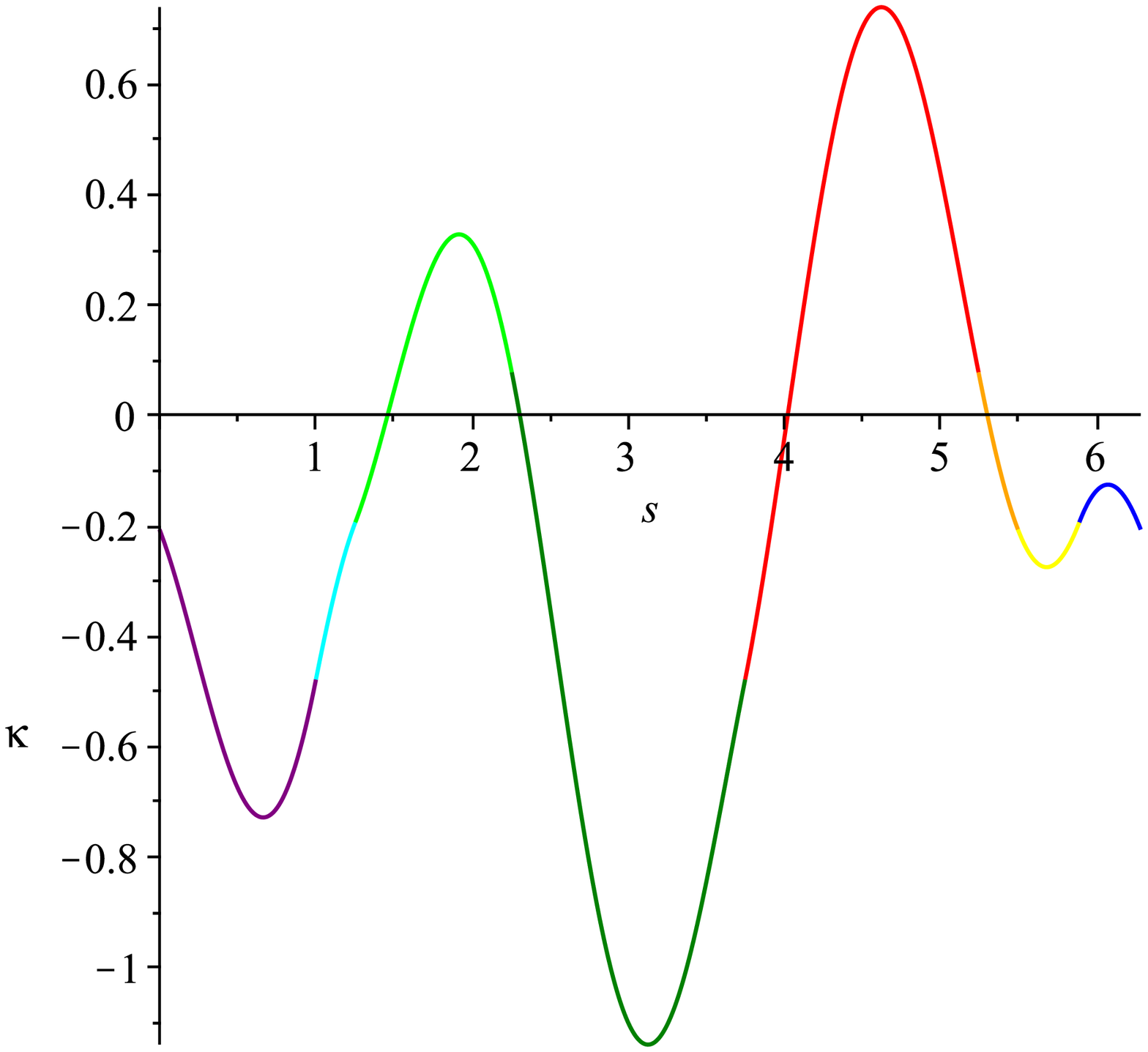}
  }
  \hspace{1cm}
  \subfigure[Curve $\Gamma_5$.]{
    \label{fig:MNc5}
    \includegraphics[width=4cm]{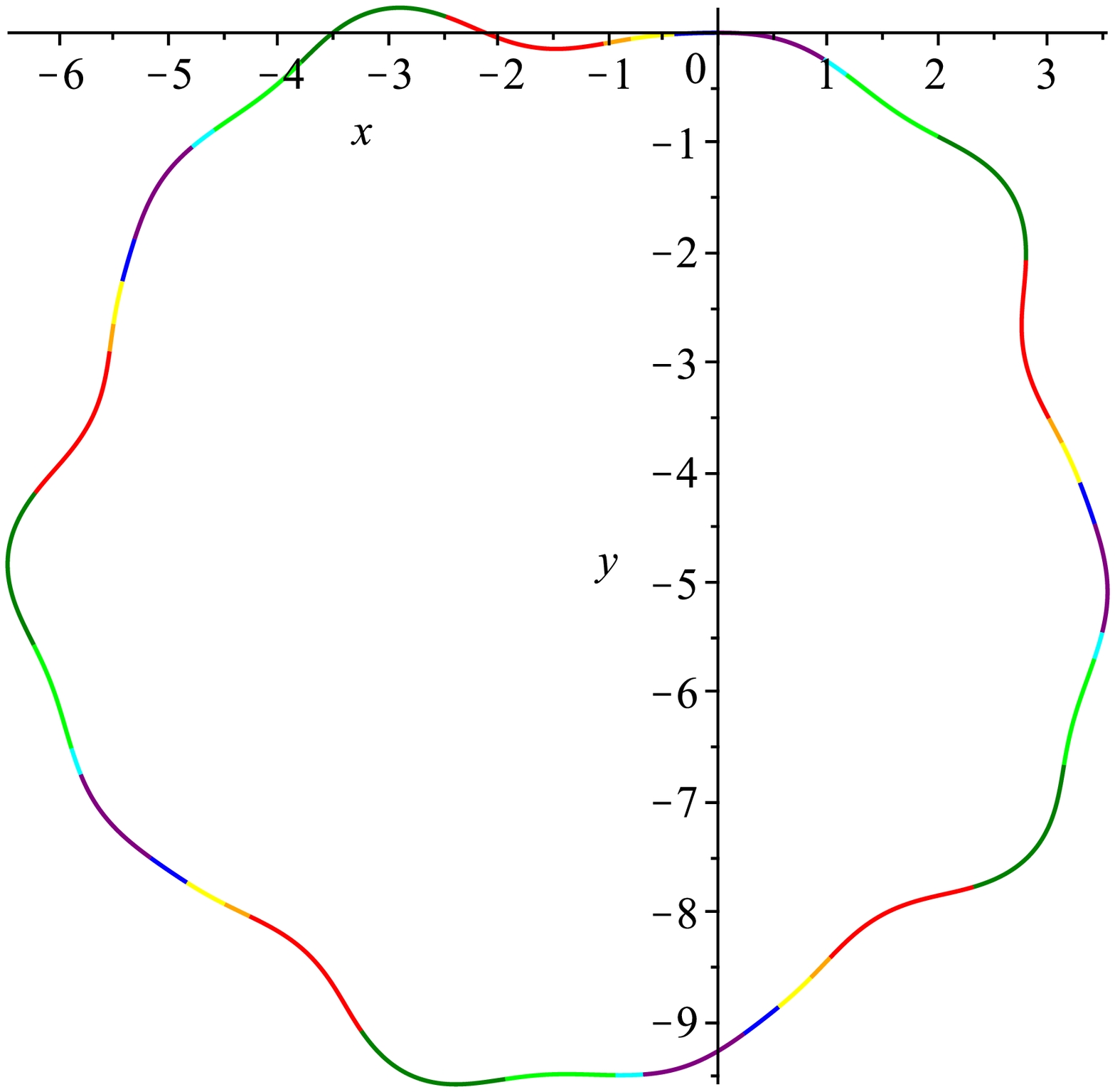}
  }
  \caption{Curves and curvatures with signature in Figure~\ref{fig:MNSig}.}
\end{figure*}

Our original  curve  $\Gamma_1$ pictured in Figure~\ref{fig:MNc1} induces the word $(bhfdgace)^5$ on the quiver in Figure~\ref{fig:MNc1}.
Since the multiplicity on each edge is 5, the only option for compatible \emph{periodic} words is to have  symmetry index 5.
Through an analysis of the quiver, we find that there  are exactly 4 other such words, distinct up to a cyclic permutation, inducing another 4 non-congruent curves with the signature  pictured in Figure~\ref{fig:MNgraph}. All five curves constructed in this section have the symmetry and signature index equal to 5.

An animation (Online Resource 1) explicitly shows how each curve piece of $\Gamma_1$ in Figure~\ref{fig:MNc1} is locally congruent to a corresponding curve piece of $\Gamma_2$ in Figure~\ref{fig:MNc2}. We can also see the difference in the parameterized signature maps of these two curves and how they trace out their shared signature in a way that directly reflects the different paths they induce on the signature quiver in another animation (Online Resource 2).

\subsection{Non-degenerate Cogwheels}\label{sect-cog}

Section 5 of \cite{Musso2009},  introduces {\it $n$-cogwheels} to 
construct simple closed curves with identical signatures
and signature index, but with different symmetry groups.
Here we introduce non-degenerate cogwheels which are constructed to have a simpler signature quiver than the ones arising in \cite{Musso2009}. Also note that while the cogwheels in \cite{Musso2009} were built using error functions (which can be replaced with smooth bump functions), cogwheels appearing here are built using a trigonometric function.
It is also worth noticing that the global symmetry group for every cogwheel pictured in this section is trivial.
The cogwheel pictured in  Figure~\ref{fig:Cog_indexSwap_1}  has signature index 2, while the rest have signature index 3.  The induced multiplicities of the edges of the signature quiver are the same for all these cogwheels except  for the cogwheels pictured in Figures~\ref{fig:Cog_indexSwap_1} and \ref{fig:Cog_indexSwap_2}.  Moreover, for any two cogwheels shown in Figures~\ref{fig:cogEx}, \ref{fig:cogwheel2}, and \ref{fig:cogwheel3} one can define a bijection, such that the corresponding points have the  same  local symmetry and signature indices. 

The construction starts with a choice of  $n \in \mathbb{Z}_+$ and partition of the interval $[0,2\pi]$ into $n$-sub-intervals
\[
  I_j = \bigg[ \frac{2(j-1)\pi}{n}, \frac{2j\pi}{n} \bigg],\, j=1,\dots,n.
\]

For each $j \in {1,\dots, n}$ 
prescribe a positive constant $a_j \in \mathbb{Z}^+$,
and consider the function
$r_j: \mathbb{R} \to \mathbb{R}_{\geq 0}$
with support in $I_j$.

\[
  r_j(t) = \begin{cases}
    a_j^{-2}(1-\cos(n a_j t)) & t \in I_j \\
    0 & t \notin I_j
  \end{cases}
\]

Let $r_0$ be a positive constant and let $\rho: \mathbb{R} \to \mathbb{R}^+$
be the unique periodic extension, with period $2\pi$, of the function
\[
  r_0 + \sum_{j=1}^{n}r_j : [0,2\pi] \to \mathbb{R}^+.
\]
Since $r_j$ is $C^\infty$ on the interior of $I_j$ for every $j$ then so is $\rho$. At the endpoints of the intervals $I_j$ one can directly check that 

\begin{align*}
  \rho(\frac{2j\pi}{n}) &= r_0,\\
  \rho'(\frac{2j\pi}{n}) &= 0,\\
  \rho''(\frac{2j\pi}{n}) &= n^2,
\text{ and } \\
\rho'''(\frac{2j\pi}{n}) &= 0\text{ for all }j \in \{0,\dots, n\}.
\end{align*}

Note that fourth derivatives do not exist at the endpoints of $I_j$, so the resulting curves are strictly $C^3$-smooth.
We define the $C^3$, non-degenerate, simple, closed curve $\Gamma$, by the parameterization:
\[
  \gamma(t) = \rho(t)(\cos t, \sin t) 
\]

\begin{figure*}
  \centering
  \subfigure[A non-degenerate cogwheel $\Gamma_1$ with $n=4$.]{
  \label{fig:cogEx}
  \includegraphics[width=6cm]{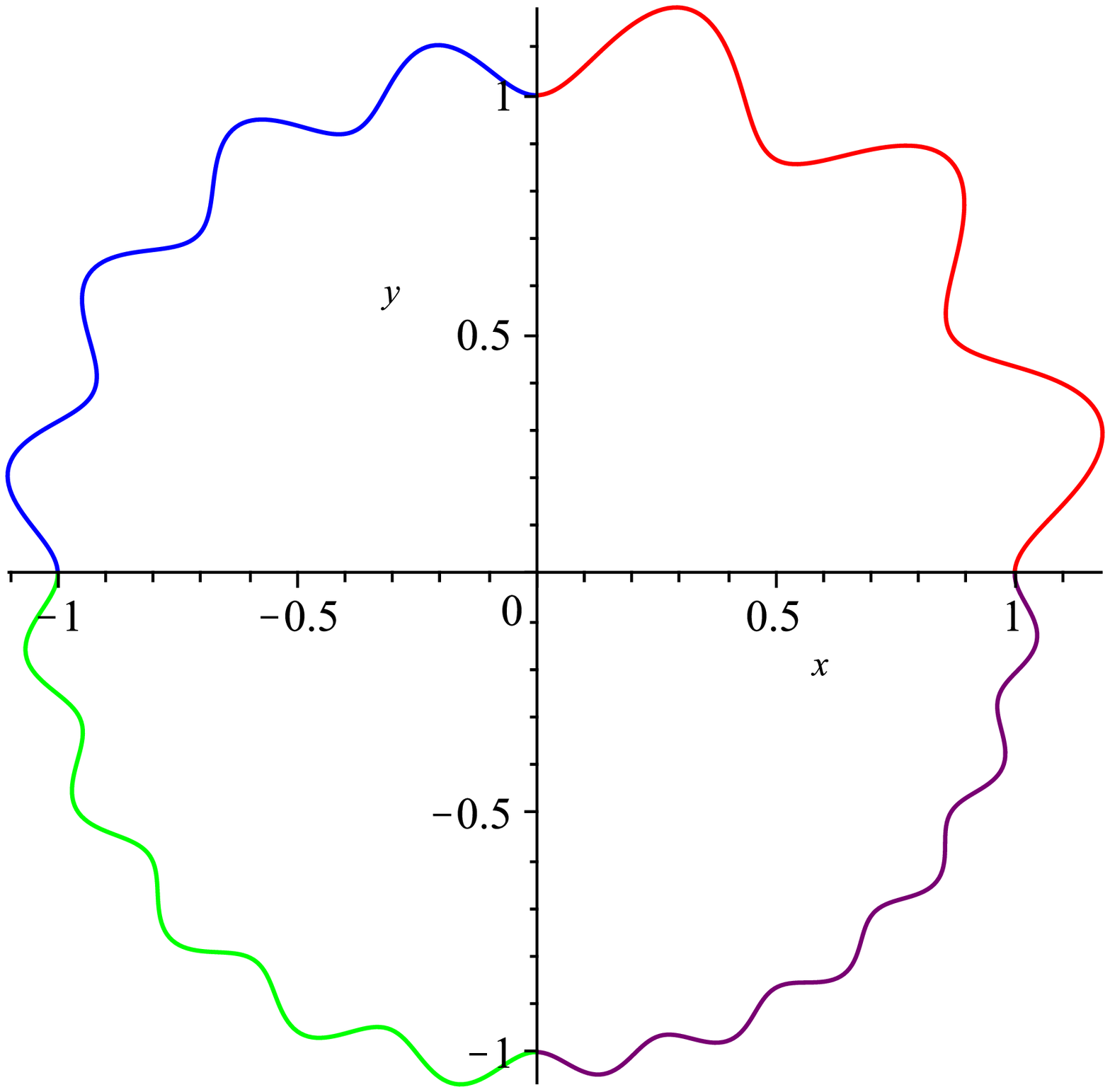}
  }
  \subfigure[The curvature $\kappa_1$ of $\Gamma_1$.]{
  \label{fig:cogEx_k}
  \includegraphics[width=6cm]{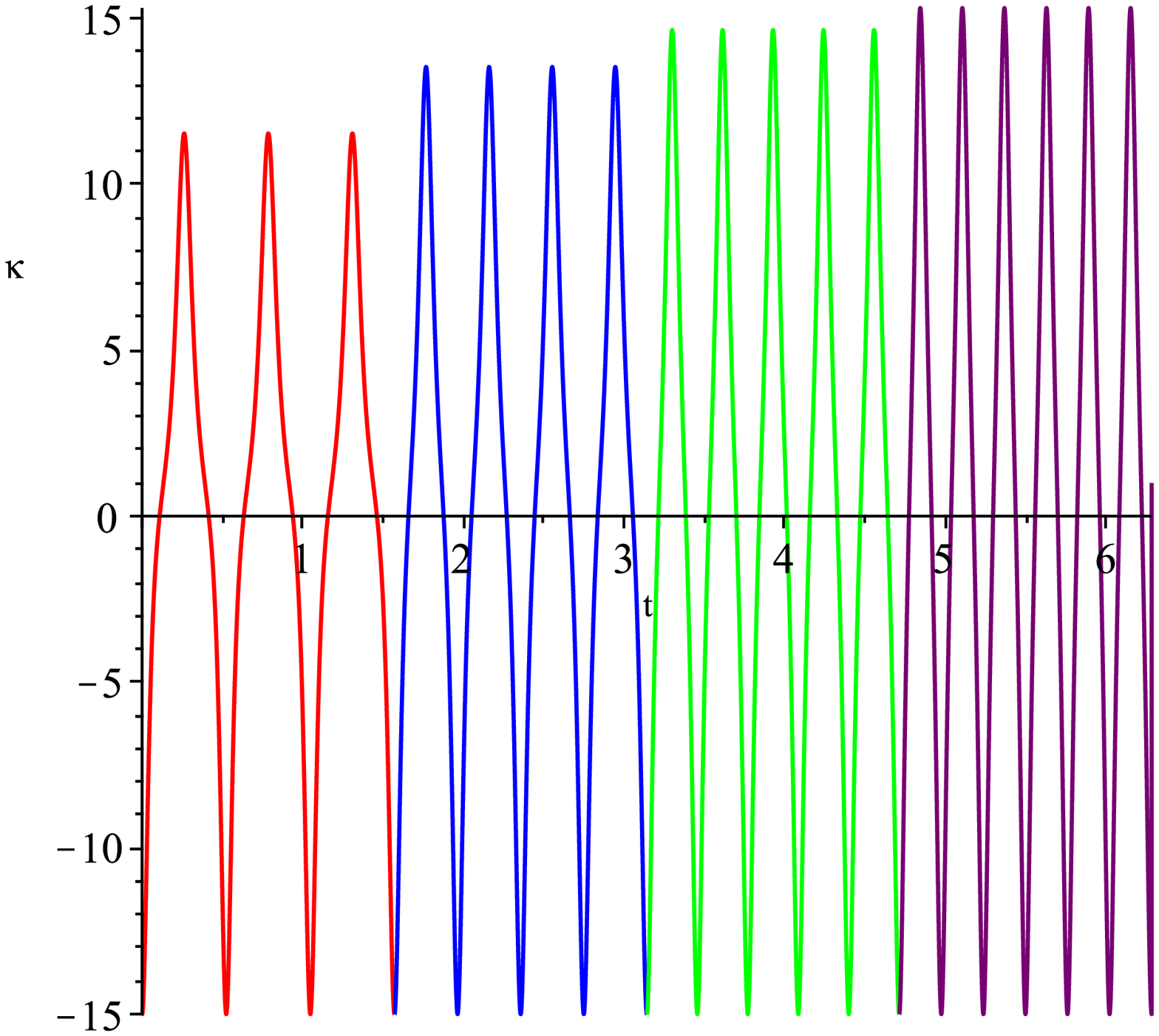}
  }
\caption{A Cogwheel and its curvature.}
\end{figure*}

\begin{figure*}
\vspace{2cm}
  \subfigure[The signature for the non-degenerate cogwheel in Figure \ref{fig:cogEx}.]{
  \label{fig:cogSig}
  \includegraphics[width=6cm]{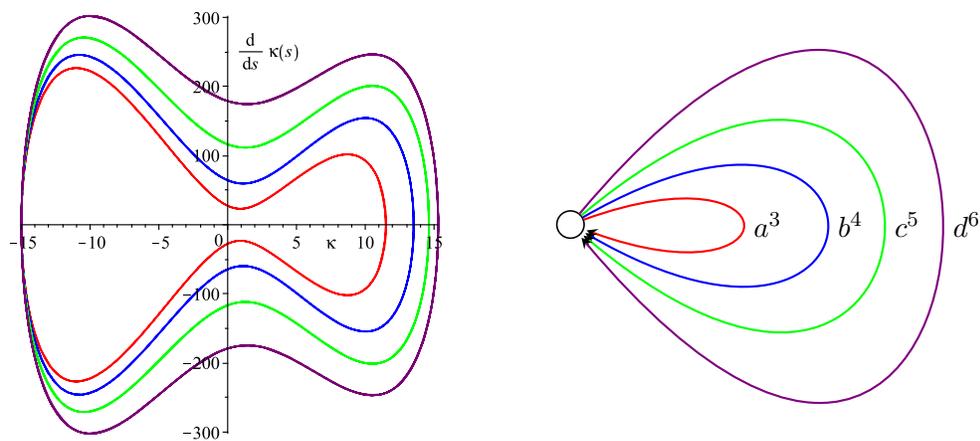}
}
  \hspace{.1cm}
  \subfigure[The corresponding quiver. The superscript on labeled edges denote the multiplicity of the path induced by Figure~\ref{fig:cogEx}.]{
\label{fig:cogGraph}
\begin{tikzpicture}[
            > = stealth, 
            shorten > = 1pt, 
            auto,
            semithick 
        ]

        \tikzstyle{every state}=[
            draw = black,
            thick,
            fill = white,
            minimum size = 4mm
        ]

	 \useasboundingbox (-1,-3) rectangle (7,2);

	\node[shape=circle,draw=black] (6) {};

	 \path[->] (6) edge [out=20, in=340, loop, distance=3cm, draw=red, thick] node {$a^3$} (6);
	 \path[->] (6) edge [out=30, in=330, loop, distance=5cm, draw=blue, thick] node {$b^4$} (6);
	 \path[->] (6) edge [out=40, in=320, loop, distance=7cm, draw=green, thick] node {$c^5$} (6);
	 \path[->] (6) edge [out=50, in=310, loop, distance=10cm, draw=violet, thick] node {$d^6$} (6);	 

    \end{tikzpicture}
  }
  \caption{The signature and quiver for the cogwheel in Figure~\ref{fig:cogEx}}
\end{figure*}

According to Definition 3 in \cite{Musso2009}, the curve $\Gamma_1$ is an example of an {\it $n$-cogwheel} with radial function $\rho$, and inner radius $r_0$.
The $j$-th section (\emph{$j$-th cog}) has $a_j$ teeth. The radial changes in the $j$-th cog is given by the function $r_j$. 
For an example see Figure \ref{fig:cogEx}, a non-degenerate cogwheel
with $r_0=1, n=4, a_1=3, a_2=4, a_3=5, a_4=6$. 

For $j=1,\dots, n$,  on the interval $I_j$, the curvature  function   is given by:
\begin{equation*}
  \kappa(t) = \frac{||\gamma'||^2a_j^2 - (a_j^2n^2r_0+n^2)\cos(na_jt)+n^2}{||\gamma'||^3a_j^2}, \hspace{.1cm} t\in I_j,
\end{equation*}
where
\begin{align*}
  ||\gamma'|| =& \frac{1}{a_j^2}
  \bigg((1-a^2_jn^2)\cos^2(na_jt) - \\
  &(2r_0a_j^2 + 2)\cos(na_jt) + r_0^2a_j^4+\\
  &(n^2 + 2r_0)a_j^2 + 1\bigg)^{1/2}.
\end{align*}

\begin{figure*}
  \centering
  \subfigure[The non-degenerate cogwheel $\Gamma_2$ induced by the word $a^3c^5b^4d^6$.]{
  \label{fig:cogwheel2}
    \includegraphics[width=6cm]{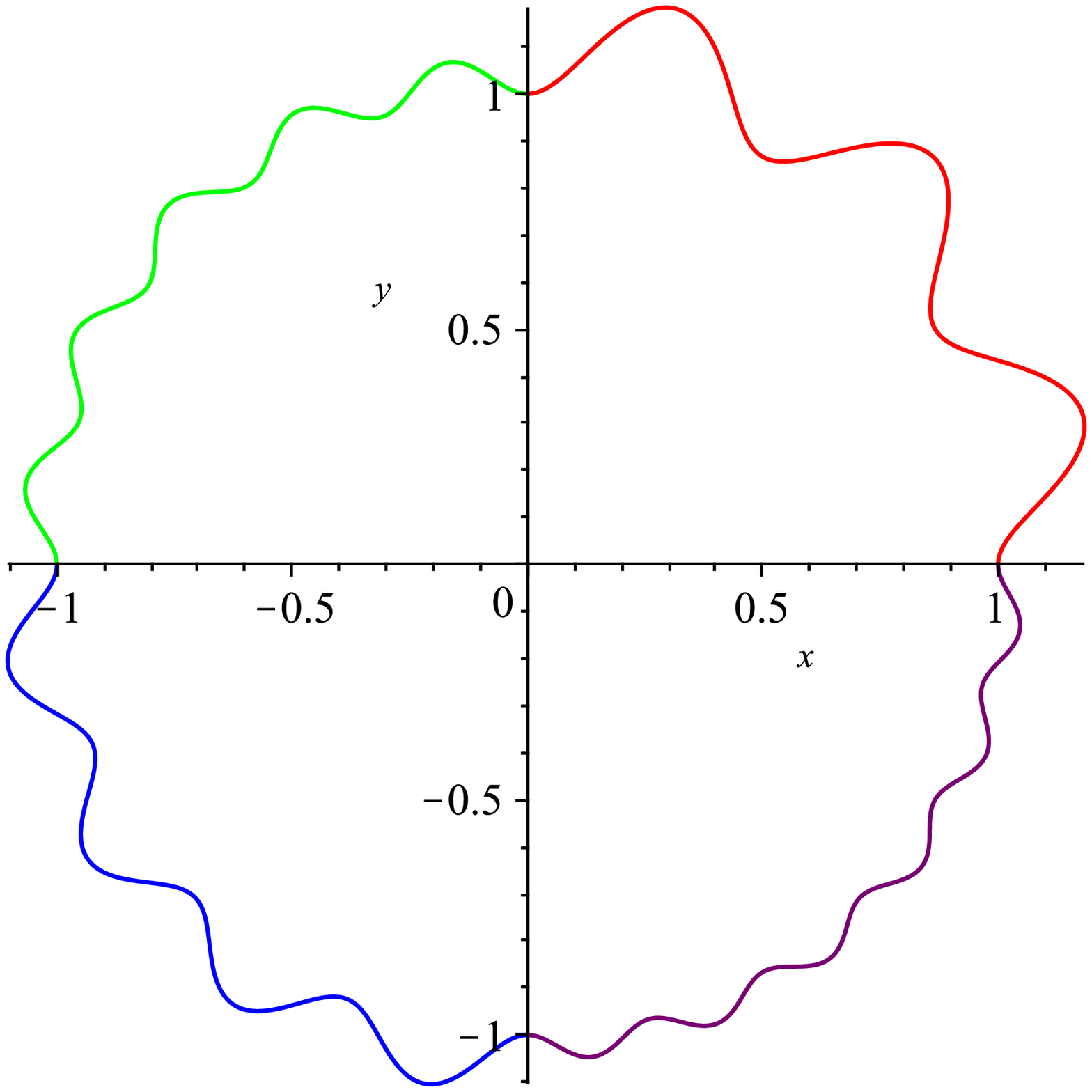}
  }
  \hspace{1cm}
  \subfigure[The non-degenerate cogwheel $\Gamma_3$ induced by the word $a^3c^5d^6b^4$.]{
  \label{fig:cogwheel3}
    \includegraphics[width=6cm]{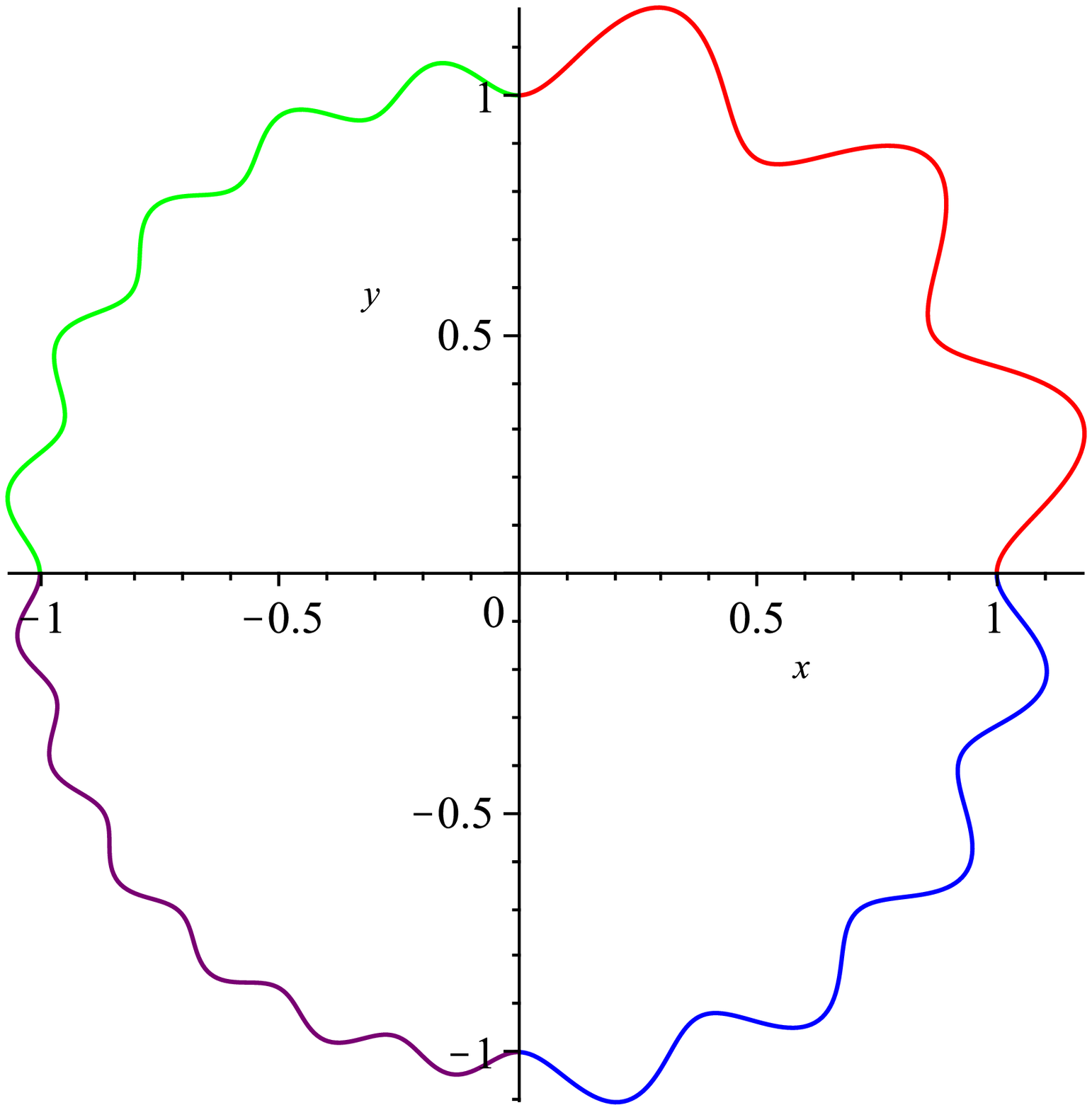}
  }
  \caption{Two cogwheels non-congruent to $\Gamma_1$ resulting from a permutation of cogs.}
\end{figure*}

\begin{figure*}
  \centering
  \subfigure[Curve induced by the word $ab^4ac^5ad^6$.]{
    \label{fig:Cog_orderSwap_1}
    \includegraphics[width=6cm]{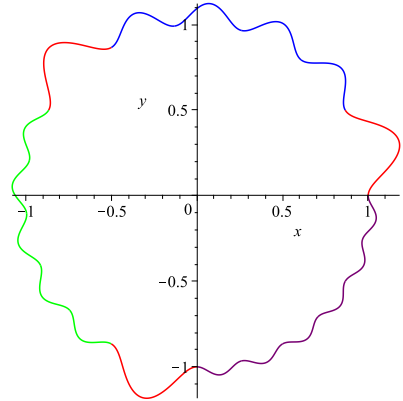}
  }
  \hspace{1cm}
  \subfigure[Curve induced by the word $bcdbdbacacdadcdbcd$.]{
    \label{fig:Cog_orderSwap_2}
    \includegraphics[width=6cm]{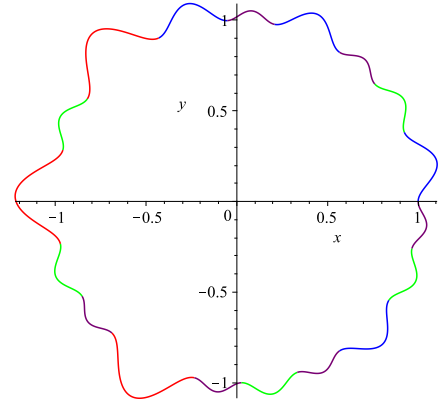}
  }
  \caption{Non-congruent curves obtained by a permutation of individual teeth of a non-degenerate cogwheel.}
  \label{fig:Cog_orderSwap}
\end{figure*}

\begin{figure*}
  \centering
  \subfigure[Cogwheel induced by word $a^2b^4c^5d^8$.]{
    \label{fig:Cog_indexSwap_1}
    \includegraphics[width=6cm]{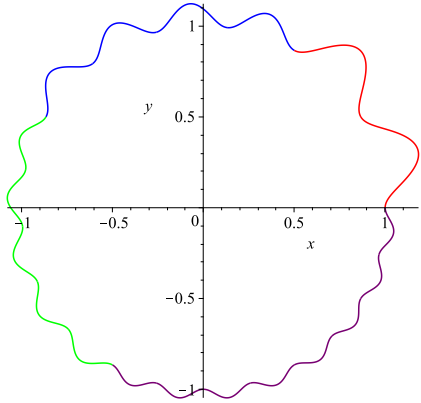}
  }
  \hspace{1cm}
  \subfigure[Cogwheel induced by word $a^3b^6c^5d^3$.]{
    \label{fig:Cog_indexSwap_2}
    \includegraphics[width=6cm]{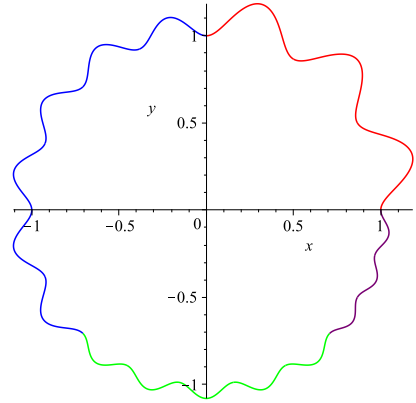}
  }
  \caption{Non-congruent curves with signature in Figure~\ref{fig:cogSig} induced by non-compatible words to that of Figure~\ref{fig:cogEx}.}
    \label{fig:Cog-exs}
\end{figure*}

The Euclidean signature and signature quiver of $\Gamma_1$ is shown in Figures~\ref{fig:cogSig} and \ref{fig:cogGraph}.
Note that this signature is significantly less complicated and is  easier to analyze than the ones corresponding to the cogwheels constructed in \cite{Musso2009}.

By Lemma 6 in \cite{Musso2009}, a permutation of cogs yields non-degenerate cogwheels with identical signature that corresponds to different compatible words from paths on $\Delta_{S_{\Gamma_1}}$ such that the same letters are grouped together.
The word that induces $\Gamma_1$ is $a^3b^4c^5d^6$ and the words inducing the cogwheels in Figures~\ref{fig:cogwheel2} and \ref{fig:cogwheel3} correspond to a permutation of cogs.


However, there are many other compatible paths allowed on $\Delta_{S_{\Gamma_1}}$ that correspond to permuting individual teeth as seen in Figure~\ref{fig:Cog_orderSwap}. These result in non-congruent curves with the same signature.
Additionally, closed curves can be generated from non-compatible paths by a consideration of the weights of each edge as seen in Figure~\ref{fig:Cog-exs}.
\\
\newpage

{\bf Acknowledgments}: We are grateful to Peter Olver for pointing out the relationship  between  the cardinalities of  local symmetry sets of a curve and multiplicities of the edges of its signature quiver and to  Ekaterina Shemyakova for suggesting  we use the term quiver for the graph associated with the signature. We  would like to thank the anonymous reviewers   for their exceptionally careful reading of the paper: their remarks and questions  were very helpful. We acknowledge NSF conference grant DMS-1952694 for providing travel funding to present an earlier version of this paper at  DART X conference.

\bibliographystyle{amsplain}

\end{document}